\newif\ifpics
\newtheorem{lemma}{Lemma}
\newtheorem{proposition}[lemma]{Proposition}
\newtheorem{theorem}[lemma]{Theorem}
\newtheorem{corollary}[lemma]{Corollary}
\newtheorem{exampl}[lemma]{Example}
\newcommand{\C}{{\bf C}}
\newcommand{\N}{{\bf N}}
\newcommand{\R}{{\bf R}}
\newcommand{\Z}{{\bf Z}}
\newcommand{\rme}{{\rm e}}
\newcommand{\ri}{{\rm i}}
\newcommand{\cE}{{\cal E}}
\newcommand{\cH}{{\cal H}}
\newcommand{\sig}{\sigma}
\newcommand{\alp}{\alpha}
\newcommand{\bet}{\beta}
\newcommand{\gam}{\gamma}
\newcommand{\lam}{\lambda}
\newcommand{\del}{\delta}
\newcommand{\eps}{\varepsilon}
\newcommand{\Gam}{\Gamma}
\newcommand{\Ome}{\Omega}
\newcommand{\tr}{{\rm tr}}
\newcommand{\Spec}{{\rm Spec}}
\newcommand{\norm}{\Vert}
\renewcommand{\Re}{{\rm Re}}
\renewcommand{\Im}{{\rm Im}}
\newcommand{\Num}{{\rm Num}}
\newcommand{\implies}{\Rightarrow}
\newcommand{\wdh}{\widehat}
\newcommand{\eqref}[1]{(\ref{#1})}
\newenvironment{proof}{\textbf{Proof}}{\hfill$\Box$}
\newenvironment{choices}{ \left\{ \begin{array}{ll} }{\end{array}\right.}
\newcommand{\mtrx}[1]{\left(\begin{array}{cc}#1 \end{array}\right)}
\newcommand{\txtmtrx}[1]{\raisebox{0.25ex}{\scalebox{0.6}{$\mtrx{#1}$}}}
\newcommand{\vctr}[1]{\left(\begin{array}{c}#1 \end{array}\right)}
\newcommand{\bkt}[1]{ [ \hspace*{-0.15em} [ #1 ] \hspace*{-0.15em} ] }
\newcommand{\move}[1]{}
\title{Spectrum of a Feinberg-Zee\\
Random Hopping Matrix}
\author{S N Chandler-Wilde\\
E B Davies}
\date{December 2011}
\begin{document}
\maketitle
%%%%%%%%%%%%%%%%%%%%%
\begin{abstract}
This paper provides a new proof of a theorem of Chandler-Wilde,
Chonchaiya and Lindner that the spectra of a certain class of
infinite, random, tridiagonal matrices contain the unit disc almost
surely. It also obtains an analogous result for a more general class
of random matrices whose spectra contain a hole around the origin.
The presence of the hole forces substantial changes to the analysis.
\end{abstract}

Mathematics Subject Classification: 65F15, 15A18, 15A52, 47A10, 47A75, 47B80, 60H25.

Key Words: spectrum, random matrix, hopping model, tridiagonal
matrix, non-self-adjoint operator.

\section{Introduction}\label{intro}

Over the last fifteen years there have been many studies of the
spectral properties of non-self-adjoint, random, tridiagonal
matrices $A$, some of them cited in \cite{CCM,FZ1,FZ2,HOZ}. It has
become clear that if all of the off-diagonal entries $A_{i,j}$ with $i-j=\pm 1$ of the matrices concerned are positive, the almost sure limit as
$N\to\infty$ of the spectra of random $N\times N$ matrices subject
to periodic boundary conditions can be quite different from the spectral
behaviour of the corresponding infinite random matrix,
\cite{EBD0,EBD1,GK1,GK2}. Indeed the limit in the first case can be the union of a small number of simple curves, while the second limit has a non-empty interior.

Numerical calculations suggest that the situation is quite different if the off-diagonal entries have variable signs, but much less has been proved in this situation, which is the one that we consider here. In a recent paper, \cite{CCL}, Chandler-Wilde, Chonchaiya and Lindner made important progress in determining the almost sure spectrum of a remarkably interesting class of non-self-adjoint, random, tridiagonal
matrices introduced by Feinberg and Zee in \cite{FZ1}, and sometimes
called random hopping matrices, because the diagonal entries all
vanish. Specifically they proved that, contrary to earlier
conjectures, the infinite, tridiagonal matrix
\[
A_c= \left(\begin{array}{cccccc}
\ddots&\ddots&&&&\\
\ddots&0&1&&&\\
&c_{n-1}&0&1&&\\
&&c_n&0&1&\\
&&&c_{n+1}&0&\ddots\\
&&&&\ddots&\ddots\\
\end{array}\right)
\]
has spectrum that contains the unit disc almost surely, \cite{CCL}.
The paper assumed that the entries $c_n$ are independent and
identically distributed with values in $\{\pm 1\}$.

In the present paper we assume that the entries $c_n$ are
independent and identically distributed with values in $\{\pm
\sig\}$ for some fixed $\sig\in (0,1]$. We assume that the
probability $p$ that $c_n=\sig$ satisfies $0<p<1$; the corresponding
probability measure on $\Ome_\sig=\{\pm \sig\}^\Z$ is denoted by
$\mu$. The matrix $A_c$ is identified with the bounded operator
acting in the natural manner on $\ell^2(\Z)$.

In Lemma~\ref{lemmaA} we prove that
\[
\Spec(A_c)\subseteq \{ \lam: 1-\sig\leq |\lam|\leq 1+\sig\}
\]
by a perturbation argument. We also prove that
\[
\Spec(A_c)\subseteq \{ x+iy:|x|+|y|\leq \sqrt{2(1+\sig^2)}\}
\]
by obtaining a bound on the numerical range of $A_c$. There are currently no general techniques for identifying the precise forms of holes in the spectra of non-self-adjoint operators, and we have not done so here, but numerical calculations are consistent with the hypothesis that it is the intersection, $H_\sig$, of two elliptical
regions as defined in (\ref{Hsigma}); see the figures at the end of Section \ref{firstproof}. Little is known about the part
of the spectrum of $A_c$ outside the unit disc even in the case
$\sig=1$, but numerical studies suggest that the boundary of the
spectrum has a self-similar fractal structure in that case;
\cite{CCL,HOZ}.

The main result of \cite{CCL}, that the spectrum contains the unit disc almost surely, is for the case that $\sig=1$, when there
is no hole in the spectrum. It depends upon the identification of a
particular sequence $c\in \Ome_1$ such that the equation $A_cf=\lam
f$ has a bounded solution $f$ for every $\lam\in \C$ such that
$|\lam |<1$.

Our Theorem~\ref{maintheorem} rederives the main result of
\cite{CCL}, in which $\sig=1$, but depends on a certain operator
identity introduced in the next section. Our main result,
Theorem~\ref{maintheorem3}, that the spectrum of $A_c$ contains that part of the unit disc which is not in $H_\sig$,
applies to all $\sig\in (0,1)$. We give
a second proof of this result in Theorem~\ref{maintheorem6}, by
combining some results for $\sig=1$ with bounds on the Lyapunov
exponents of certain transfer matrices. Both proofs depend, additionally,
on results on the spectra of operators on $\ell^2(\Z)$ which have different periodic structures on the positive and negative half-axes. They also both depend on explicit spectral calculations which we are able to carry out for certain operators $A_c$ with $c$ having arbitrarily large period.

Our main results, as just stated, concern the spectrum of the (bi-)infinite matrix $A_c$. In a shorter final section we spell out implications for the spectra of the corresponding semi-infinite and finite matrices, illustrating these observations with computations of the  finite matrix spectra. In particular we show, by applying recent results of Lindner and Roch \cite{LR10}, that, unlike $A_c$, the semi-infinite matrix has no hole in its spectrum for $\sig\in (0,1)$, but contains the unit disc for all $\sig\in (0,1]$.

Let $\cE_\sig$ denote the set of all $c\in\Ome_\sig$ that are
pseudo-ergodic in the sense of \cite{EBD1}. Precisely, $c\in\cE_\sig$
if for every finite sequence $b:\{1,...,n\}\to\{\pm \sig\}$ there
exists $m\in\Z$ such that $b_r=c_{m+r}$ for all $r\in \{1,...,n\}$.
Such sequences $c$ are easy to construct without any reference to
probability theory. The following facts, proved in \cite{EBD1}, and rederived in \cite{Li,CWL} as an instance of the application of limit operator arguments, will
be crucial in this paper.

\begin{proposition}\label{quasiergodic}
If $b,\, c \in\cE_\sig$ then $\Spec(A_b)=\Spec(A_c)$. Let $S_\sig$
denote this set, which is the main object of study in the paper. If
$c\in\Ome_\sig$, then $c\in\cE_\sig$ almost surely with respect to
the measure $\mu$. Finally
$$
S_\sig = \bigcup_{b\in\Ome_\sig} \Spec ( A_b ).
$$
\end{proposition}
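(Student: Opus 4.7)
The plan is to prove the three assertions in turn, with the first (equality of spectra for pseudo-ergodic sequences) being the technical core and the rest following from it together with a standard Borel--Cantelli argument.

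For the first assertion, I would fix $b,c\in\cE_\sig$ and a point $\lam\notin\Spec(A_c)$, writing $M=\|(A_c-\lam)^{-1}\|$, and try to show that $A_b-\lam$ is invertible by the following approximation scheme. Because $A_c-\lam$ is boundedly invertible, $\|h\|\leq M\|(A_c-\lam)h\|$ for every $h\in\ell^2(\Z)$. Let $S$ denote the bilateral shift on $\ell^2(\Z)$, and observe that $S^{-m}A_cS^m=A_{c^{(m)}}$ where $c^{(m)}$ is the translate of $c$. For any $f\in\ell^2(\Z)$ of finite support, I would use the pseudo-ergodicity of $c$ to choose $m=m(f)$ so that $c^{(m)}$ agrees with $b$ on a window strictly containing $\supp f$; since $A_b$ is tridiagonal, this forces $A_{c^{(m)}}f=A_bf$. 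Applying the bound with $h=S^mf$ and using that $S$ is an isometry then yields
\[
\|f\|\leq M\|(A_b-\lam)f\|.
\]
Density of compactly supported vectors promotes this to all of $\ell^2(\Z)$, so $A_b-\lam$ is bounded below and in particular has closed range.

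The main obstacle is passing from bounded-below to actual invertibility, since strong approximation does not in general transfer invertibility. I would resolve this by running the same argument on the adjoints: $(A_c-\lam)^{-*}=(A_c^*-\bar\lam)^{-1}$ also has norm $M$, and $A_c^*$ is again tridiagonal (with the subdiagonal and superdiagonal of $A_c$ swapped) and pseudo-ergodic with respect to $b^*$ in exactly the same sense. Repeating the window argument gives $\|f\|\leq M\|(A_b^*-\bar\lam)f\|$ for compactly supported $f$, and hence for all $f$, which means $A_b-\lam$ has dense range. Combined with closed range this yields $\Spec(A_b)\subseteq\Spec(A_c)$, and by symmetry equality holds.

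The second assertion is a direct Borel--Cantelli computation. For each fixed finite word $b\in\{\pm\sig\}^n$ the events $E_{b,k}=\{c\in\Ome_\sig:c_{kn+1}\cdots c_{kn+n}=b\}$, $k\in\Z$, are independent under $\mu$ with common probability $p^{\hash\{r:b_r=\sig\}}(1-p)^{n-\hash\{r:b_r=\sig\}}>0$, so almost surely infinitely many $E_{b,k}$ occur. Intersecting over the countable set of finite words gives $\mu(\cE_\sig)=1$.

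For the third assertion, the inclusion $S_\sig\subseteq\bigcup_{b\in\Ome_\sig}\Spec(A_b)$ is immediate since $\cE_\sig\subseteq\Ome_\sig$ is nonempty by the second assertion. The reverse inclusion $\Spec(A_b)\subseteq S_\sig$ for arbitrary $b\in\Ome_\sig$ uses only a one-sided version of the first step: picking any $c\in\cE_\sig$, every finite sub-pattern of $b$ occurs in $c$, so the same window argument (together with its adjoint counterpart) shows that $\lam\notin\Spec(A_c)$ implies $\lam\notin\Spec(A_b)$, i.e.\ $\Spec(A_b)\subseteq\Spec(A_c)=S_\sig$.
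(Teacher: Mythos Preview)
Your argument is correct. Note, however, that the paper does not actually prove this proposition: it is stated with the remark that it is ``proved in \cite{EBD1}, and rederived in \cite{Li,CWL} as an instance of the application of limit operator arguments,'' and no proof is given in the paper itself. What you have written is essentially a self-contained version of the argument in \cite{EBD1}: the key mechanism there, as in your proposal, is that pseudo-ergodicity lets one match any finite window of $b$ by a translate of $c$, so that the lower bound $\|f\|\leq M\|(A_c-\lam)f\|$ transfers to $A_b-\lam$ on compactly supported $f$, and the adjoint version supplies surjectivity. The limit-operator route of \cite{Li,CWL} packages the same idea more abstractly (every $A_b$ arises as a limit operator of $A_c$ when $c$ is pseudo-ergodic, and invertibility of a band-dominated operator is equivalent to uniform invertibility of all its limit operators), but the underlying analysis is the same translation-and-window trick you carry out by hand. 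Your Borel--Cantelli argument for the almost-sure statement and the one-sided inclusion for the final identity are standard and correct.
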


To describe a further result we establish, for $N\in \N$ and $\sigma\in (0,1]$ let $\pi_{N,\sigma}$ denote the union of $\Spec(A_c)$ over all $c\in \Omega_\sigma$ that are periodic with period $\leq N$. Let
\begin{equation} \label{eq:piinf}
\pi_{\infty,\sig} =  \bigcup_{N\in\N} \pi_{N,\sig}.
\end{equation}
One obvious implication of the above proposition is that
\begin{equation} \label{eq:pidef}
\pi_{\infty,\sig} \subset S_\sigma.
\end{equation}
As is well-known, the set $\pi_{N,\sigma}$ is the union of eigenvalues of $N\times N$ matrices. (Precisely, it is the union, over all sequences $c$ and all $|\alpha|=1$, of the eigenvalues of the matrix $A^{(N,\mathrm{per})}_{c,\alpha}$ defined in \eqref{finitematrix2} below; see \eqref{spectrumperiodic} and \cite{LOTS}. For another, equivalent characterisation see Lemma \ref{BIO}.) This simple observation is useful, in that it provides a method for computing what prove to be large subsets of $S_\sigma$, and will be one component in our arguments.

An interesting question is whether $\pi_{\infty,\sig}$ is dense in $S_\sig$. We do not answer this question one way or the other, but our method of proof of Theorem~\ref{maintheorem}, showing that the unit disc is a subset of $S_1$, as a by-product, and with some additional argument, leads to a proof that $\pi_{\infty,1}$ is dense in the unit disc (Theorem \ref{denseness}).

For the sake of simplicity we will, throughout the rest of the paper, omit the subscript $\sig$ in our notations if $\sig=1$. We use $\N$  and $\Z_+$, respectively, as our notations for the sets of positive and non-negative integers.

\section{An abstract theorem}\label{abstract}

In this section we present an abstract theorem that might be
interesting in other contexts. It will be applied in
Section~\ref{main}.

Let $A$ be a bounded linear operator acting on the Hilbert space
$\cH$ and let $\cH=\cH_e\oplus\cH_o$ be an orthogonal decomposition of $\cH$.

\begin{lemma}\label{interchange}
If $A(\cH_e)\subseteq \cH_o$ and $A(\cH_o)\subseteq \cH_e$ then $\cH_e$ and $\cH_o$ are invariant
under the action of $A^2$. If $B$ is the restriction of $A^2$ to
$\cH_e$ and $M$ is the restriction of $A^2$ to $\cH_o$ then
\begin{equation}
\Spec(A^2)\backslash \{0\}=\Spec(B)\backslash
\{0\}=\Spec(M)\backslash \{0\}.\label{identity1}
\end{equation}
If $A$ is invertible then
\begin{equation}
\Spec(A^2)=\Spec(B)=\Spec(M).\label{identity2}
\end{equation}
\end{lemma}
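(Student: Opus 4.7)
The plan is to exploit the block structure of $A$ with respect to the decomposition $\cH = \cH_e \oplus \cH_o$. Since $A$ swaps the two subspaces, it takes the block form
\begin{equation*}
A = \left(\begin{array}{cc} 0 & P \\ Q & 0 \end{array}\right),
\end{equation*}
where $Q: \cH_e \to \cH_o$ and $P: \cH_o \to \cH_e$ denote the corresponding bounded restrictions. Invariance of $\cH_e$ and $\cH_o$ under $A^2$ is then immediate: any $v \in \cH_e$ satisfies $Av \in \cH_o$ by hypothesis, whence $A^2 v = A(Av) \in A(\cH_o) \subseteq \cH_e$, and symmetrically for $\cH_o$. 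In the same block form,
\begin{equation*}
A^2 = \left(\begin{array}{cc} PQ & 0 \\ 0 & QP \end{array}\right),
\end{equation*}
so the restrictions identify as $B = PQ$ acting on $\cH_e$ and $M = QP$ acting on $\cH_o$.

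The identity \eqref{identity1} then reduces to the classical fact that $PQ$ and $QP$ share the same non-zero spectrum. I would prove this by direct verification of the resolvent formula: for $\lam \neq 0$ with $\lam \notin \Spec(PQ)$, an algebraic check gives
\begin{equation*}
(\lam - QP)\inv = \lam\inv \bigl( I + Q (\lam - PQ)\inv P \bigr),
\end{equation*}
so $\lam \notin \Spec(QP)$, and by symmetry the converse also holds. Combining this with the block-diagonal structure, which forces $\Spec(A^2) = \Spec(B) \cup \Spec(M)$, gives \eqref{identity1}.

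For the second assertion, the key observation is that invertibility of $A$ forces both $P$ and $Q$ to be invertible. Given $y \in \cH_o$, its unique preimage $x = A\inv y \in \cH$ decomposes as $x = x_e + x_o$; since $Ax_e \in \cH_o$ and $Ax_o \in \cH_e$, uniqueness of the direct sum forces $Ax_o = 0$ and $Qx_e = y$, and injectivity of $A$ then yields $x_o = 0$. Hence $Q: \cH_e \to \cH_o$ is a bounded bijection, with bounded inverse by the open mapping theorem, and symmetrically so is $P$. Therefore $B = PQ$ and $M = QP$ are invertible, so $0$ is excluded from each of $\Spec(A^2)$, $\Spec(B)$, $\Spec(M)$, promoting \eqref{identity1} to \eqref{identity2}. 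I expect no genuine obstacle in this argument; the crux is the standard resolvent identity linking $\Spec(PQ)$ and $\Spec(QP)$ off zero, and the only delicate point is the final upgrade, where the direct sum hypothesis is essential to pass from invertibility of the off-diagonal $A$ to invertibility of its individual corner blocks $P$ and $Q$.
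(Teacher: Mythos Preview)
Your proof is correct and follows essentially the same route as the paper: write $A$ in off-diagonal block form, read off $B=PQ$ and $M=QP$ from the block-diagonal $A^2$, and invoke the standard fact $\Spec(PQ)\setminus\{0\}=\Spec(QP)\setminus\{0\}$. The only minor difference is in the final step: the paper observes directly that $A$ invertible makes $A^2$ invertible, and a block-diagonal operator is invertible iff each block is, so $B$ and $M$ are invertible without any need to argue separately that $P$ and $Q$ are.
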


\begin{proof}
The decomposition $\cH=\cH_e\oplus\cH_o$ allows one to write the operator $A$ in the form
\[
A=\mtrx{0&X\\Y&0}
\]
where $X:\cH_o\to\cH_e$ and $Y:\cH_e\to\cH_o$. Therefore
\begin{equation}
A^2=\mtrx{XY&0\\0&YX}.\label{A2identity}
\end{equation}
This implies that $B=XY$ and $M=YX$. The second identity in (\ref{identity1}) follows by some simple algebra that holds for any pair of bounded operators $X$ and $Y$, and the first identity is a trivial consequence.

If $A$ is invertible then (\ref{A2identity}) implies that $B$ and $M$
are also invertible; therefore (\ref{identity2}) is equivalent to
(\ref{identity1}).
\end{proof}

\begin{theorem}\label{spectralequality}
Let $\cH=\ell^2(\Z)$, let $\cH_e$ be the closed subspace of sequences whose supports are contained in the set of even integers, and let $\cH_o$ be the closed subspace of sequences whose supports are
contained in the set of odd integers. Let $A$ be a bounded operator
on $\cH$ whose matrix satisfies $A_{r,s}= 0$ for all $r,\, s$ such
that $|r-s|\not= 1$. Then $A(\cH_e)\subseteq \cH_o$ and $A(\cH_o)\subseteq \cH_e$. Moreover the identities
\[
\Spec(A^2)=\Spec(B)=\Spec(M)
\]
are valid in either of the following two cases.
\begin{enumerate}
\item
$|A_{r,s}|=1$ for all $r,\, s$ such that $|r-s|=1$;
\item
There exist constants $\bet,\, \gam$ such that $0<\bet<\gam<\infty$
and $|A_{r,s}|\leq \bet$ if $r-s=1$ while $|A_{r,s}|\geq \gam$ if
$r-s=-1$.
\end{enumerate}
\end{theorem}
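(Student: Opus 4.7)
The first claim, $A(\cH_e)\subseteq\cH_o$ and $A(\cH_o)\subseteq\cH_e$, is immediate since $A$ is tridiagonal with vanishing diagonal, so $Ae_n$ lies in the span of $e_{n\pm 1}$, which have opposite parity to $e_n$. This puts $A$ in the block form $\mtrx{0 & X \\ Y & 0}$ relative to $\cH_e\oplus\cH_o$, and Lemma~\ref{interchange} unconditionally supplies both $\Spec(A^2)\setminus\{0\}=\Spec(B)\setminus\{0\}=\Spec(M)\setminus\{0\}$ and $\Spec(A^2)=\Spec(B)\cup\Spec(M)$ (the latter from the block decomposition $A^2=B\oplus M$). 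In each of the two cases it therefore only remains to decide whether $0$ belongs to all three spectra simultaneously or to none of them.

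In case~2 the plan is to show that $A$ itself is invertible, after which the second half of Lemma~\ref{interchange} finishes the proof. The operator $X$ sends $e_{2k+1}$ to $A_{2k,2k+1}e_{2k}+A_{2k+2,2k+1}e_{2k+2}$; under the hypotheses, the coefficient $A_{2k,2k+1}$ (with $r-s=-1$) has modulus at least $\gam$, while $A_{2k+2,2k+1}$ (with $r-s=1$) has modulus at most $\bet$. Writing $X=D+N$ for its diagonal and strictly subdiagonal parts, $D$ is boundedly invertible with $\|D^{-1}\|\le\gam^{-1}$ and $\|N\|\le\bet$, so $X=D(I+D^{-1}N)$ is invertible by the Neumann series since $\|D^{-1}N\|\le\bet/\gam<1$. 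For $Y$ the dominant entries sit on the superdiagonal rather than the diagonal, but the analogous factorisation $Y=U(I+U^{-1}D')$, with $U$ a boundedly invertible weighted superdiagonal shift, again delivers invertibility. Hence $A$ is invertible and Lemma~\ref{interchange} gives the identity.

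In case~1, $A$ is generally not invertible (for example, if every off-diagonal entry of $A$ equals $1$ then $A$ is the free Laplacian $L+L^*$, with $0\in\Spec(A)$), so the strategy is instead to show $0\in\Spec(B)$ and $0\in\Spec(M)$ directly, by constructing approximate null vectors. Writing $Ae_n=\alp_n e_{n-1}+\bet_n e_{n+1}$ with $|\alp_n|=|\bet_n|=1$, I define $w^{(N)}\in\cH_e$ supported on $\{e_{2j}:|j|\le N\}$ by setting $w^{(N)}_0=1$ and propagating via the two-step recursion $w^{(N)}_{2j+2}=-(\bet_{2j}/\alp_{2j+2})\,w^{(N)}_{2j}$. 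A direct computation gives $(Yw^{(N)})_{2m+1}=\bet_{2m}w^{(N)}_{2m}+\alp_{2m+2}w^{(N)}_{2m+2}$, which vanishes by construction whenever $-N\le m\le N-1$; the only surviving contributions come from the two boundary odd indices $\pm(2N+1)$, each a single unit-modulus term, so $\|Yw^{(N)}\|^2=2$, while $\|w^{(N)}\|^2=2N+1$ because every one of its nonzero components has modulus $1$. Since $\|X\|\le 2$ (each row and column of the matrix of $X$ has at most two unit-modulus entries), the normalised vector $\tilde w^{(N)}=w^{(N)}/\|w^{(N)}\|$ satisfies $\|B\tilde w^{(N)}\|\le\|X\|\,\|Y\tilde w^{(N)}\|\to 0$, placing $0$ in the approximate point spectrum of $B$. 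An identical construction in $\cH_o$ (with the roles of $X$ and $Y$ swapped) yields $0\in\Spec(M)$, and combined with the non-zero identity from Lemma~\ref{interchange} this forces $\Spec(A^2)=\Spec(B)=\Spec(M)$. The main obstacle is this case~1 construction, which relies essentially on both hypotheses: the vanishing diagonal of $A$ lets one annihilate $Yw$ at every interior odd index via a recursion confined to even indices, while the unit-modulus condition keeps $|w_{2j}^{(N)}|=1$ throughout, so that $\|w^{(N)}\|$ grows like $\sqrt{N}$ while $\|Yw^{(N)}\|$ remains bounded.
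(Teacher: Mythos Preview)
Your argument is correct. The overall architecture matches the paper's: reduce to Lemma~\ref{interchange}, then in Case~2 show $A$ is invertible and in Case~1 show $0$ lies in all three spectra. In Case~2 the paper does essentially the same Neumann-series estimate, though it works with $A$ directly (splitting $A=A_L+A_R$ where $A_L$ is the weighted shift coming from the $r-s=-1$ entries) rather than inverting $X$ and $Y$ separately; this is a cosmetic difference.

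The genuine divergence is in Case~1. The paper observes that the same two-step recursion you use yields a globally bounded sequence $f\in\ell^\infty(\Z)$, supported on the even integers, with $Af=0$; it then invokes the nontrivial fact (quoted from \cite{RRS}) that a band operator which fails to be invertible on $\ell^\infty(\Z)$ also fails to be invertible on $\ell^2(\Z)$, concluding $0\in\Spec(B)$ (and similarly $0\in\Spec(M)$). Your approach instead truncates that same bounded null sequence to a finite window and checks directly that the resulting $\ell^2$ vectors form an approximate null sequence for $B$. This is more elementary and self-contained, since it avoids the external $\ell^\infty$--$\ell^2$ equivalence result; the paper's route is shorter to state but imports more machinery. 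Both exploit the same underlying observation, namely that unit-modulus off-diagonals make the recursion norm-preserving.
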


\begin{proof}\\
\textbf{Case~1.} An elementary calculation establishes that there
exists a sequence $f:\Z\to\C$ such that $Af=0$, $|f_{2n}|=1$ for all
$n$ and $f_{2n+1}=0$ for all $n$, so that $A$ and $B$ are not invertible viewed as operators on $\ell^\infty(\Z)$, and thus not invertible as operators on $\ell^2(\Z)$ (see e.g.\ \cite[Theorem 2.5.2]{RRS}). So $0\in\Spec(A)$ and
$0\in\Spec(B)$. Similarly there exists a sequence $f:\Z\to\C$ such
that $Af=0$, $|f_{2n+1}|=1$ for all $n$ and $f_{2n}=0$ for all $n$.
Hence $0\in\Spec(M)$. The result follows by combining this with
(\ref{identity1}).

\textbf{Case~2.} The operator $A_L$ associated with the matrix
\[
(A_L)_{r,s}=\begin{choices}
A_{r,s}&\mbox{ if $r-s=-1$},\\
0&\mbox{ otherwise},
\end{choices}
\]
is invertible and satisfies $\norm A_L^{-1}\norm\leq \gam^{-1}$. The
operator $A_R=A-A_L$ satisfies $\norm A_R\norm\leq \bet$. Therefore
$A$ is invertible with
\[
\norm A^{-1}\norm =\norm A_L^{-1}(I+A_RA_L^{-1})^{-1}\norm %
\leq \frac{\gam^{-1}}{1-\bet/\gam}=\frac{1}{\gam-\bet}.
\]
The proof is completed by applying \eqref{identity2}.
\end{proof}

\section{The case $\sig=1$}\label{main}

The following lemma was noted in \cite{CCL}.

\begin{lemma}\label{symmetry}

If $c\in\Ome$ then $\Spec(A_c)$ is invariant with respect to both of
the maps $\lam\to\overline{\lam}$ and $\lam\to -\lam$. If $\lam\in
S$ then $\overline{\lam}$ and $i\lam$ lie in $S$. Hence $S$ is
invariant under the dihedral symmetry group $D_2$ generated by these
two maps.
\end{lemma}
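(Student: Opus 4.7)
The plan is to realise each claimed symmetry via an explicit (anti-)unitary intertwiner on $\ell^2(\Z)$. For $\lam\mapsto\bar\lam$, I would use the fact that $A_c$ has real matrix entries. Writing $J$ for the pointwise antilinear conjugation $(Jf)_n=\overline{f_n}$ on $\ell^2(\Z)$, one has $JA_cJ=A_c$, so $A_c-\lam I$ is invertible if and only if $J(A_c-\lam I)J=A_c-\bar\lam I$ is. Hence $\Spec(A_c)=\overline{\Spec(A_c)}$, and in particular $\bar\lam\in S$ whenever $\lam\in S$.

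For $\lam\mapsto -\lam$, I would introduce the unitary $U$ on $\ell^2(\Z)$ defined by $(Uf)_n=(-1)^n f_n$. A direct calculation gives $(U^{-1}A_cU)_{r,s}=(-1)^{s-r}(A_c)_{r,s}$; since $(A_c)_{r,s}$ vanishes unless $|r-s|=1$, this phase equals $-1$ on every non-zero entry. Therefore $U^{-1}A_cU=-A_c$, whence $\Spec(A_c)=-\Spec(A_c)$.

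For the final assertion, that $i\lam\in S$ whenever $\lam\in S$, I would introduce the unitary $V$ on $\ell^2(\Z)$ with $(Vf)_n=i^n f_n$. Then $(V^{-1}A_cV)_{r,s}=i^{s-r}(A_c)_{r,s}$, which equals $i$ on the super-diagonal and $-ic_r$ on the sub-diagonal. Factoring out the common $i$ gives $V^{-1}A_cV=iA_{-c}$, where $-c$ denotes the sequence with entries $-c_n$; this step uses $\sig=1$ essentially, since one needs $-c\in\Ome$. Taking spectra yields $\Spec(A_c)=i\,\Spec(A_{-c})$, so if $\lam\in\Spec(A_c)$ then $-i\lam\in\Spec(A_{-c})\subseteq S$, and the negation symmetry applied to $A_{-c}$ promotes this to $i\lam\in\Spec(A_{-c})\subseteq S$. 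Dihedral invariance of $S$ is then immediate since $\lam\mapsto\bar\lam$ and $\lam\mapsto i\lam$ generate a dihedral group. I expect no serious obstacle beyond the index bookkeeping; the calculation works precisely because on nearest-neighbour pairs the phase products $(-1)^{s-r}$ and $i^{s-r}$ take the values needed to be absorbed into the $-1$ or $i$ factor on the right-hand side, and because $\{\pm 1\}$ is closed under negation so that $-c$ remains in $\Ome$.
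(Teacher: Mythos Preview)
Your proof is correct and follows essentially the same approach as the paper: both realise the symmetries via diagonal (anti-)unitary conjugations on $\ell^2(\Z)$, with your $V$ being precisely the inverse of the paper's $D$ (so $V^{-1}A_cV=DA_cD^{-1}=iA_{-c}$). The only cosmetic difference is that you prove $\Spec(A_c)=-\Spec(A_c)$ directly via the separate unitary $U$, whereas the paper obtains it by iterating the identity $\Spec(A_c)=i\Spec(A_{-c})$ twice; and for the $S$-statement you use $\Spec(A_{-c})\subseteq S$ from Proposition~\ref{quasiergodic}, while the paper invokes the equivalent observation that $c\in\cE\Leftrightarrow -c\in\cE$.
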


\begin{proof}
The invariance of $\Spec(A_c)$ under complex conjugation follows
directly from the fact that $A_c$ has real entries. If $D$ is the
diagonal matrix with entries $D_{r,r}=(-i)^r$ for all $r\in\Z$ then
$DA_cD^{-1}=iA_{-c}$, so
\begin{equation}
\Spec(A_c)=i\Spec(A_{-c}).\label{irotate}
\end{equation}
Iterating this identity yields $\Spec(A_c)=-\Spec(A_{c})$. This
proves the first part of the lemma. The second part follows once one
observes that $c\in  \cE$ if and only if $-c\in\cE$.
\end{proof}

The formulae in (\ref{iteration}) are related to those in
\cite[Prop.~2.1]{CCL}, in a way that we will make explicit in Section \ref{sec:maps}. However, nothing resembling the following lemma appears in \cite{CCL}.

\begin{lemma}\label{square} Given $b\in\Ome$, let $c=\Gam_+(b)\in\Ome$ be the unique sequence satisfying
\begin{equation}
c_0=1, \hspace{2em} c_{2n}+c_{2n+1}=0, \hspace{2em}
c_{2n}c_{2n-1}=b_n,\label{iteration}
\end{equation}
for all $n\in \Z$. Then $A_c^2$ is unitarily equivalent to
$A_b\oplus M_b$ acting in $\ell^2(\Z)\oplus \ell^2(\Z)$, where
\begin{equation}
(M_bf)_n=-f_{n-1}+(c_{2n+1}+c_{2n+2})f_n+f_{n+1}\label{Mbformula}
\end{equation}
for all $f\in \ell^2(\Z)$. Moreover
\[
\Spec(A_c^2) = \Spec(A_b)=\Spec(M_b).
\]
\end{lemma}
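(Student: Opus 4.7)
The plan is to compute $A_c^2$ explicitly and then apply Theorem \ref{spectralequality} after identifying the restrictions of $A_c^2$ to $\cH_o$ and $\cH_e$ with $A_b$ and $M_b$, respectively. Reading off the matrix of $A_c$ gives $(A_c f)_n = c_{n-1} f_{n-1} + f_{n+1}$, hence
\[
(A_c^2 f)_n = c_{n-1} c_{n-2} f_{n-2} + (c_{n-1} + c_n) f_n + f_{n+2}.
\]
So $A_c^2$ preserves each of $\cH_e$ and $\cH_o$, and because $|c_n|=1$, Case~1 of Theorem \ref{spectralequality} immediately yields $\Spec(A_c^2) = \Spec(A_c^2|_{\cH_e}) = \Spec(A_c^2|_{\cH_o})$.

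To identify the odd-support piece with $A_b$, I would use the unitary $U_o:\ell^2(\Z)\to\cH_o$ defined by $(U_o \hat f)_{2m-1} = \hat f_m$. Specialising the formula for $A_c^2 f$ to $n=2m-1$ and applying \eqref{iteration} at index $m-1$ -- namely $c_{2m-2}+c_{2m-1}=0$, which kills the diagonal term, and $c_{2m-2}c_{2m-3}=b_{m-1}$, which pins down the sub-diagonal -- gives $U_o^{-1} A_c^2 U_o = A_b$.

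For the even-support piece I would use the shifted unitary $U_e:\ell^2(\Z)\to\cH_e$ with $(U_e \tilde f)_{2m+2} = \tilde f_m$; setting $n=2m+2$ and using $c_{2m}+c_{2m+1}=0$ (so that $c_{2m+1}c_{2m}=-c_{2m}^2=-1$) reproduces exactly the formula \eqref{Mbformula} for $M_b$, the diagonal coefficients $c_{2m+1}+c_{2m+2}$ being left intact. Combining the two identifications gives the unitary equivalence $A_c^2 \simeq A_b \oplus M_b$, and putting this together with the spectral identity above yields $\Spec(A_c^2) = \Spec(A_b) = \Spec(M_b)$. The argument is essentially a direct computation; the only mildly delicate point is choosing the index-shift conventions in $U_o$ and $U_e$ so that the defining relations of $\Gam_+$ slot in to produce precisely $A_b$ on the odd side and $M_b$ on the even side.
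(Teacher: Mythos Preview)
Your approach is exactly the paper's: compute $A_c^2$ directly, identify its restrictions to $\cH_e$ and $\cH_o$, and invoke Case~1 of Theorem~\ref{spectralequality}. However, you have misread the matrix: the paper's convention (stated explicitly at the start of its proof of this lemma, and used throughout, e.g.\ in the recurrence $u_{n+1}+c_n u_{n-1}=\lam u_n$) is $(A_c f)_n = c_n f_{n-1}+f_{n+1}$, not $c_{n-1}f_{n-1}+f_{n+1}$. With the correct formula one gets
\[
(A_c^2 f)_n = c_n c_{n-1}\, f_{n-2} + (c_n+c_{n+1})\,f_n + f_{n+2},
\]
and now the diagonal term $c_n+c_{n+1}$ vanishes precisely when $n$ is \emph{even} (by $c_{2m}+c_{2m+1}=0$), not odd; for $n=2m-1$ the diagonal is $c_{2m-1}+c_{2m}=c_{2m-1}(1+b_m)$, which is not zero in general. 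So your identifications are swapped: the restriction to $\cH_e$ (via $f_{2m}\mapsto \hat f_m$) is $A_b$, and the restriction to $\cH_o$ (via $f_{2m+1}\mapsto \tilde f_m$) is $M_b$, exactly as the paper states. Once this index shift is corrected your computation and the rest of the argument go through unchanged, and the final conclusions $A_c^2\simeq A_b\oplus M_b$ and $\Spec(A_c^2)=\Spec(A_b)=\Spec(M_b)$ are of course unaffected by which summand sits on which subspace.
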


\begin{proof}
One may write $(A_cf)_n=c_nf_{n-1}+f_{n+1}$ for all $n\in \Z$, or
equivalently $A_c=V_cR+L$ where $(Lf)_n=f_{n+1}$, $(Rf)_n=f_{n-1}$
and $(V_cf)_n=c_nf_n$ for all $f\in \ell^2(\Z)$.

Therefore
\begin{eqnarray*}
A_c^2&=& V_cRV_cR+LV_cR+V_cRL+L^2\\
&=& X_cR^2+Y_c+L^2
\end{eqnarray*}
where $X_c$ and $Y_c$ are the diagonal matrices with diagonal
entries
\begin{eqnarray*}
X_{c,n,n}&=&c_nc_{n-1}, \\
Y_{c,n,n}&=& c_n+c_{n+1}.
\end{eqnarray*}
The operator $A_c^2$ has two invariant subspaces
\[
\cH_e=\{f\in \ell^2(\Z):f_{2n+1}=0 \mbox{ for all } n\in\Z\}
\]
and $\cH_o=\ell^2(\Z)\ominus \cH_e$. After an obvious relabeling of
the subscripts, the restriction of $A_c^2$ to $\cH_e$ equals $A_b$
while the restriction of $A_c^2$ to $\cH_o$ is equal to $M_b$, as
defined in (\ref{Mbformula}). The final statement of the lemma is
now an application of Theorem~\ref{spectralequality}, case~1.
\end{proof}

We will exploit extensively the formula $\Spec(A_c^2) = \Spec(A_b)$ which appears in the above lemma. The equation $\Spec(A_b)=\Spec(M_b)$ will not play a role in our subsequent arguments, but makes an intriguing connection between spectra of rather different tridiagonal operators. Extending this connection slightly,
for $b\in \Ome$ define $c=\Gam_+(b)$ and $\tilde M_b$ by
$$
(\tilde M_b f)_n =  f_{n-1} + i^n(c_{2n+1}+c_{2n+2}) f_n + f_{n+1},
$$
for all $f\in \ell^2(\Z)$. Then, arguing as we do above to show \eqref{irotate}, we see that
$$
\Spec(\tilde M_b) = i\,\Spec(M_b).
$$
In particular, in the case $b\in \cE$ when, by Lemma \ref{symmetry}, $i\Spec(M_b)=i\Spec(A_b) = \Spec(A_b)$, we see that
$$
S = \Spec(A_b) = \Spec(\tilde M_b).
$$
Thus, in studying $S$, we are studying both the almost sure spectrum  of the infinite hopping-sign matrix $A_b$ with respect to the measure $\mu$, and the almost sure spectrum, with respect to the same measure, of $\tilde M_b$, a discrete Schr\"odinger operator with a particular, complex random potential.

In the next lemma we define the square root of any non-zero complex
number to be the root whose argument lies in $(-\pi/2,\pi/2]$.

\begin{lemma}\label{squareroot}
If $b\in\Ome$ and $c=\Gam_+(b)$ then $\lam\in\Spec(A_b)$ if and only
if $\pm\sqrt{\lam}$ both lie in $\Spec(A_c)$. If $\lam\in S$ then
$\pm \sqrt{\lam}$ both lie in $S$.
\end{lemma}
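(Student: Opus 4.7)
The strategy is to combine the key identity $\Spec(A_b)=\Spec(A_c^2)$ from Lemma~\ref{square} with the dihedral symmetry $\Spec(A_c)=-\Spec(A_c)$ from Lemma~\ref{symmetry}, bridged by the spectral mapping theorem.

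As a preliminary I would record that $c=\Gam_+(b)$ lies in $\Ome$: the recurrence \eqref{iteration} propagates the property $c_n\in\{\pm 1\}$ by induction in both directions from $c_0=1$, using $c_{2n+1}=-c_{2n}$ and $c_{2n}=b_n/c_{2n-1}$. This is important because it lets us apply Lemma~\ref{symmetry} to $A_c$ itself.

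For the main equivalence, Lemma~\ref{square} gives $\Spec(A_b)=\Spec(A_c^2)$, and the spectral mapping theorem applied to $p(z)=z^2$ gives $\Spec(A_c^2)=\{\mu^2:\mu\in\Spec(A_c)\}$. Hence $\lam\in\Spec(A_b)$ if and only if at least one of $\pm\sqrt{\lam}$ lies in $\Spec(A_c)$. Lemma~\ref{symmetry} then upgrades ``at least one'' to ``both'': since $\Spec(A_c)=-\Spec(A_c)$, the two conditions $\sqrt{\lam}\in\Spec(A_c)$ and $-\sqrt{\lam}\in\Spec(A_c)$ are equivalent, so either one is equivalent to the conjunction.

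For the second assertion, given $\lam\in S$, pick any $b\in\cE$ so that $\lam\in\Spec(A_b)=S$, and set $c=\Gam_+(b)\in\Ome$. The first part of the lemma yields $\pm\sqrt{\lam}\in\Spec(A_c)$, and the final statement of Proposition~\ref{quasiergodic}, namely $S=\bigcup_{b\in\Ome}\Spec(A_b)$, gives $\Spec(A_c)\subseteq S$; this is the subtle point, because $c$ is not itself pseudo-ergodic (consecutive pairs $(c_{2n},c_{2n+1})$ always have opposite signs) and so we cannot conclude $\Spec(A_c)=S$, only the one-sided containment we actually need. Beyond this mild bookkeeping there is no real obstacle; the whole argument is just a symmetrisation of the spectral mapping theorem once Lemmas~\ref{square} and \ref{symmetry} are in place.
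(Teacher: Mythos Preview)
Your proof is correct and follows essentially the same route as the paper: both combine $\Spec(A_b)=\Spec(A_c^2)=(\Spec(A_c))^2$ from Lemma~\ref{square} and the spectral mapping theorem with the $\lam\mapsto -\lam$ symmetry of Lemma~\ref{symmetry}, and for the second part both pick $b\in\cE$ and use the inclusion $\Spec(A_c)\subseteq S$ from Proposition~\ref{quasiergodic}. The only cosmetic difference is that the paper phrases the second step as $\lam\in(\Spec(A_c))^2\subseteq S^2$ and then invokes the symmetry of $S$, whereas you invoke the already-proved first part and then the inclusion $\Spec(A_c)\subseteq S$; these are equivalent.
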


\begin{proof}
Lemma~\ref{symmetry} and Lemma~\ref{square} imply that the following
statements are equivalent. $\lam\in \Spec(A_b)$; $\lam\in
\Spec(A_c^2)=\left( \Spec(A_c)\right)^2$; either $\sqrt{\lam}$ or
$-\sqrt{\lam}$ lies in $\Spec(A_c)$; $\pm\sqrt{\lam}$ both lie in
$\Spec(A_c)$.

If $\lam\in S$ and $b\in \cE$ then $\lam\in \Spec(A_b)$ by
Proposition~\ref{quasiergodic}. Lemma~\ref{square} implies that
\[
\lam\in \Spec(A_c^2)=\left(\Spec(A_c)\right)^2\subseteq S^2.
\]
Therefore either $\sqrt{\lam}$ or $-\sqrt{\lam}$ lie in $S$. The
proof is completed by applying Lemma~\ref{symmetry}.
\end{proof}

\begin{theorem}\label{maintheorem}
The set $S$ contains
\begin{equation} \label{the_set}
\bigcup_{n\in\Z_+,\;
r\in \{0,...,2^{n+2}\}}\;\rme^{\pi i r/2^{n+1}}\,[0,2^{1/2^n}] .
\end{equation}
Hence $S$ contains the unit disc in $\C$.
\end{theorem}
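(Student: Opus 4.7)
The plan is to bootstrap from the easily-computed spectrum of a particular deterministic $A_c$, then iterate the square-root operation of Lemma~\ref{squareroot}, combined with the $i$-rotation symmetry of Lemma~\ref{symmetry}, to fill out the spoke sets displayed in (\ref{the_set}). Denote by $T_n$ the level-$n$ set appearing there: a union of $2^{n+2}$ equally-spaced rays from the origin, each of length $2^{1/2^n}$. It suffices to prove $T_n \subseteq S$ for every $n\in\Z_+$; the claim that $S$ contains the closed unit disc then follows because $S$ is closed (being the spectrum of a bounded operator), $2^{1/2^n}>1$ for all $n$, and the spoke angles $\pi r/2^{n+1}$ become dense in $[0,2\pi]$ as $n\to\infty$.

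For the base case $n=0$, I take $c\in\Ome$ to be the constant sequence $c_n\equiv 1$. This $c$ is not pseudo-ergodic, but the second characterisation of $S$ in Proposition~\ref{quasiergodic} (as the union of $\Spec(A_b)$ over \emph{all} $b\in\Ome$) still applies. Since $A_c=L+R$ is the usual free discrete Laplacian on $\Z$, its spectrum is $[-2,2]$, so $[-2,2]\subseteq S$; applying $\lam\mapsto i\lam$ from Lemma~\ref{symmetry} adjoins $[-2i,2i]$, and together these two segments constitute $T_0$. For the inductive step, assume $T_n\subseteq S$ and pick $\lam=t\,\rme^{i\pi r/2^{n+1}}\in T_n$ with $t\in[0,2^{1/2^n}]$ and $r\in\{0,\ldots,2^{n+2}\}$. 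Lemma~\ref{squareroot} places both $\pm\sqrt{\lam}$ in $S$; these two values lie on a pair of opposite rays through the origin, at arguments $\pi r/2^{n+2}$ and $\pi r/2^{n+2}+\pi$ (mod $2\pi$), extending to radius $\sqrt{t}\in[0,2^{1/2^{n+1}}]$. As $r$ runs over $\{0,\ldots,2^{n+2}\}$, the union of these two angle sequences sweeps exactly $\{\pi r'/2^{n+2}:r'=0,\ldots,2^{n+3}\}$, which is the angle set of $T_{n+1}$. Hence $T_{n+1}\subseteq S$, closing the induction.

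The main point requiring care is this angle-counting in the inductive step: the sign ambiguity in Lemma~\ref{squareroot} must exactly double the number of distinct arguments from $2^{n+2}$ to $2^{n+3}$ at each level, matching the formula (\ref{the_set}). Fortunately this works cleanly because the lemma always delivers both $\pm\sqrt{\lam}$. Beyond that, the proof is routine: start the induction from the spectrum of the translation-invariant $A_c$, take square roots downward to produce $T_n$ for every $n$, and close up to cover the disc.
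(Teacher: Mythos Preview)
Your proof is correct and follows essentially the same route as the paper: a direct computation for the base case $n=0$, followed by inductive application of Lemma~\ref{squareroot}, and finally a density argument. The only cosmetic difference is that for $n=0$ the paper computes both $\Spec(A_c)=[-2,2]$ for $c\equiv 1$ and $\Spec(A_c)=i[-2,2]$ for $c\equiv -1$, whereas you compute only the first and then invoke the $i$-rotation of Lemma~\ref{symmetry}; these are entirely equivalent.
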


\begin{proof}
For $n=0$ the theorem states that
\[
[0,2]\times\{ 1,i,-1,-i\}\subset S.
\]
This follows by combining Lemma~\ref{symmetry} with direct
calculations of $\Spec(A_c)$ when $c_n=1$ for all $n\in\Z$ (in which case $\Spec(A_c)=[-2,2]$) and when
$c_n=-1$ for all $n\in\Z$ (in which case $\Spec(A_c)=i[-2,2]$). For larger $n$ the first statement of the
theorem follows by applying Lemma~\ref{squareroot} inductively.
The
second statement is now a consequence of the fact that the set \eqref{the_set}
is dense in the unit disc.
\end{proof}

\section{The maps $\Gam_\pm$} \label{sec:maps}

A crucial role has been played in the proofs above by the nonlinear map $\Gamma_+$ on $\Ome$ introduced in Lemma \ref{square}, and this map will be key to the arguments that we make throughout this paper. And in fact a sequence which is almost a fixed point of $\Gamma_+$ (in a sense made precise below Lemma \ref{spaceinversion}) is central to the proof of Theorem \ref{maintheorem} in \cite{CCL}, though the proof is quite different and no mapping $\Gamma_+$ appears in \cite{CCL}.

The relationship between the above proof of
Theorem~\ref{maintheorem} and that in \cite{CCL} is clarified to
some extent by the following. Building on the definition of $\Gamma_+$ made above, let us define maps $\Gam_\pm:\Ome\to\Ome$
by $\Gam_\pm (b)=c$ where
\begin{equation}
c_0=\pm 1, \hspace{2em} c_{2n}+c_{2n+1}=0, \hspace{2em}
c_{2n}c_{2n-1}=b_n,\label{cfixed0}
\end{equation}
for all $n\in\Z$. We also define the space inversion symmetry $b\to
\wdh{b}$ by $\wdh{b}_n=b_{1-n}$ for all $n\in\Z$.

\begin{lemma}\label{spaceinversion}
If $\Gam_\pm (b)=c$ then $\Gam_\mp (\wdh{b})=\wdh{c}$. In particular
$\Gam_\pm (c)=c$ if and only if $\Gam_\mp (\wdh{c})=\wdh{c}$. Each
of the equations $\Gam_\pm (c)=c$ has exactly one solution.
\end{lemma}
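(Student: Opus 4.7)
The plan is in three steps: verify the first assertion by direct calculation from the defining equations, read off the second assertion as an immediate corollary, and then analyse fixed points of $\Gam_+$ via a single-variable recursion (which suffices, by the second assertion, to handle $\Gam_-$ as well).

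For the first step, set $c = \Gam_\pm(b)$ and $d_n := c_{1-n}$, so that $d = \wdh{c}$. I would then check term by term that $d$ satisfies the defining system for $\Gam_\mp(\wdh{b})$: the normalisation $d_0 = c_1 = -c_0 = \mp 1$ follows from $c_0 + c_1 = 0$ together with $c_0 = \pm 1$; the alternation $d_{2n} + d_{2n+1} = c_{1-2n} + c_{-2n} = 0$ is the defining relation for $c$ at index $-n$; and the product $d_{2n} d_{2n-1} = c_{1-2n} c_{2-2n} = b_{1-n} = \wdh{b}_n$ is the defining relation for $c$ at index $1-n$. Since these three conditions characterise $\Gam_\mp(\wdh{b})$ uniquely, we obtain $\wdh{c} = \Gam_\mp(\wdh{b})$. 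Specialising to $b = c$ and using that $\wdh{\cdot}$ is a self-inverse bijection of $\Ome$ gives the second assertion, and yields a bijection between the fixed point sets of $\Gam_+$ and $\Gam_-$ via $c \leftrightarrow \wdh{c}$.

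It remains to show that $\Gam_+(c) = c$ has a unique solution. I would introduce $a_n := c_{2n}$, so that $c_{2n+1} = -a_n$ and the condition $c_{2n} c_{2n-1} = c_n$ reads $-a_n a_{n-1} = c_n$. Splitting by the parity of $n$ yields $a_{2k} a_{2k-1} = -a_k$ and $a_{2k+1} a_{2k} = a_k$ for every $k \in \Z$. Since each $a_j \in \{\pm 1\}$, dividing the two relations forces $a_{2k+1} = -a_{2k-1}$, and the $k=0$ odd relation together with $a_0 = 1$ gives $a_1 = 1$; hence $a_{2k+1} = (-1)^k$ for every $k$. Substituting back, both parity branches collapse to the single recursion $a_{2k} = (-1)^k a_k$ for every $k \in \Z$, which together with $a_0 = 1$ determines $a_n$ at every even index by induction on $|k|$ (each descent halves $|k|$ and terminates at an odd index, whose value is now explicit, or at $0$). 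The construction is reversible, so the sequence it produces is the unique fixed point of $\Gam_+$. The only mild obstacle is checking that this recursion is well-posed on all of $\Z$, but this is immediate from $|k| < |2k|$ for $k \neq 0$.
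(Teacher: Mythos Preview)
Your first two steps match the paper's proof: the paper likewise verifies the defining equations directly (it goes in the reverse direction, setting $d=\Gam_-(\wdh{b})$ and checking $\wdh{d}=c$, but the computation is the same), and then reads off the second assertion.

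For the third assertion, the paper simply says the statement ``follows immediately'' and gives no details. The implicit argument is presumably the most naive one: the fixed-point conditions $c_0=1$, $c_{2n+1}=-c_{2n}$, $c_{2n}=c_n c_{2n-1}$ determine $c_2,c_3,c_4,\ldots$ by induction on $n\geq 1$, and similarly $c_{-1},c_{-2},\ldots$ by induction on $-n$, so existence and uniqueness are automatic. Your route via $a_n=c_{2n}$ is correct but does more work than needed; on the other hand it yields the bonus of an explicit formula $a_{2k+1}=(-1)^k$ for odd indices and the self-similar recursion $a_{2k}=(-1)^k a_k$ for even ones, structural information that the bare inductive argument does not reveal.
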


\begin{proof}
Let $c=\Gam_+(b)$ and $d=\Gam_-(\wdh{b})$. Then $d_0=-1$,
$d_{2n}+d_{2n+1}=0$ and $d_{2n}d_{2n-1}=\wdh{b}_n=b_{1-n}$ for all
$n\in\Z$. Therefore $\wdh{d}_0=d_1=1$. Also
\[
\wdh{d}_{2n+1}+\wdh{d}_{2n}=d_{1-(2n+1)}+d_{1-2n}=d_{-2n}+d_{1-2n}=0
\]
and
\[
\wdh{d}_{2n}\wdh{d}_{2n-1}=d_{1-2n}d_{1-(2n-1)}%
=d_{2(1-n)-1}d_{2(1-n)}=\wdh{b}_{1-n}=b_n
\]
for all $n\in\Z$. Therefore $\wdh{d}=\Gam_+(b)=c$ and $d=\wdh{c}$.

The proof that $c=\Gam_-(b)$ implies $d=\Gam_+(\wdh{b})$ is similar.
The other statements of the lemma follow immediately.
\end{proof}

This paper and \cite{CCL} use three different special sequences. The
sequences $c_\pm$ are defined by $\Gam_\pm (c_\pm)=c_\pm$. It
follows directly from their definitions that $c_{+,0}=1$ and
$c_{+,1}=-1$ while $c_{-,0}=-1$ and $c_{-,1}=1$. However
\[
c_{+,n}=c_{-,n}=c_{+,1-n}=c_{-,1-n}
\]
for all $n\not= 0,\, 1$. The paper \cite{CCL} uses the sequence
$c_e$ such that $c_{e,0}=c_{e,1}=1$, while $c_{e,n}=c_{\pm,n}$ for
all other $n$. Because of the space inversion symmetry the use of
$c_+$ or $c_-$ in any proof is really a matter of convenience.

We now turn to the solution of the equation $A_c u=\lam u$ where
$u:\Z\to\C$ is an arbitrary sequence. The eigenvalue equation is
equivalent to the second order recurrence equation
\[
u_{n+1}+c_nu_{n-1}=\lam u_n.
\]

\begin{lemma}\label{uflip}
Suppose that $c\in\Ome$ and $\widehat{c}_n=c_{1-n}$ for all
$n\in\Z$; that $u_{n+1}+c_nu_{n-1}=\lam u_n$ for some $\lam\in\C$
and all $n\in\Z$ and $u_0=0$, $u_1=1$; and that
$\widehat{u}_{n+1}+\widehat{c}_n\widehat{u}_{n-1}=\lam
\widehat{u}_n$ for all $n\in\Z$ and $\widehat{u}_0=0$,
$\widehat{u}_1=1$. Then $|\widehat{u}_{n}|=|u_{-n}|$ for all
$n\in\Z$. In particular $u_n$ is bounded as $n\to\infty$ if and only
if $\widehat{u}_n$ is bounded as $n\to -\infty$.
\end{lemma}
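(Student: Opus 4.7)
The plan is to exhibit an explicit sign sequence $\eps_n\in\{\pm 1\}$ for which $\widehat{u}_n = \eps_n\, u_{-n}$ holds for every $n\in\Z$; once this identity is established, $|\widehat{u}_n|=|u_{-n}|$ is automatic and the boundedness equivalence follows by the substitution $m=-n$.

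First I would reindex: set $v_n := u_{-n}$ and translate the recurrence for $u$ at index $m=-n$ into a recurrence for $v$, obtaining $v_{n-1}+c_{-n}v_{n+1}=\lam v_n$, which can be solved explicitly for $v_{n+1}$ because $c_{-n}^2=1$. Applying the original recurrence at $m=0$ together with $u_0=0$ and $u_1=1$ also gives $u_{-1}=-c_0$, so $v_0=0$ and $v_1=-c_0$.

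Next I would try the ansatz $w_n = \eps_n v_n$ and ask when it satisfies $w_{n+1}+\widehat{c}_n w_{n-1}=\lam w_n$. Expanding the left-hand side and using the $v$-recurrence to eliminate $v_{n+1}$ produces two conditions, obtained by matching the coefficients of $v_n$ and of $v_{n-1}$ against those of $\lam\eps_n v_n$. A short calculation shows that both constraints collapse to the single recurrence $\eps_n = c_{1-n}\eps_{n-1}$, valid for all $n\in\Z$ (the coefficient conditions become the same after a shift of index). Since $c_{1-n}\in\{\pm 1\}$, this recurrence determines $\eps$ uniquely in both directions from any starting value, and I would take $\eps_0 = -1$; this makes $w_0=0$ automatic and yields $\eps_1 = -c_0$, so that $w_1 = (-c_0)(-c_0) = 1$.

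Thus $w$ satisfies the same second-order recurrence as $\widehat{u}$ with matching initial values at $n=0,1$. Because the recurrence is invertible in both directions (as $\widehat{c}_n\in\{\pm 1\}$), uniqueness of solutions to the initial value problem forces $w_n=\widehat{u}_n$ for every $n\in\Z$, completing the proof. I expect the main obstacle to be purely bookkeeping: tracking the sign conventions in deriving the $\eps$-recurrence, and verifying that the two coefficient-matching conditions really coincide. No substantive difficulty is anticipated.
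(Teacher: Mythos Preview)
Your proposal is correct and follows essentially the same route as the paper: set $v_n=u_{-n}$, multiply by a sign sequence satisfying $\eps_n=\widehat{c}_n\eps_{n-1}$, and observe that $w_n=\eps_n v_n$ obeys the $\widehat{u}$-recurrence. The only cosmetic difference is normalisation: the paper takes $a_0=1$ (your $\eps$ with opposite sign), obtains $w=\gamma\widehat{u}$, and then checks $|\gamma|=1$ at $n=1$, whereas you choose $\eps_0=-1$ up front so that $w_1=1$ and $w=\widehat{u}$ exactly.
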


\begin{proof}
If one puts $v_n=u_{-n}$ then
\begin{equation}
\widehat{c}_{n+1}v_{n+1}+v_{n-1}=c_{-n}u_{-n-1}+u_{-n+1}%
=\lam u_{-n}=\lam v_n \label{growthlemma}
\end{equation}
for all $n\in\Z$. Define $a:\Z\to\{\pm 1\}$ by $a_0=1$ and
$a_n/a_{n-1}=\widehat{c}_n$ for all $n\in\Z$. If one now puts
$w_n=a_n v_n$ for all $n\in\Z$ then (\ref{growthlemma}) implies
\[
w_{n+1}+\widehat{c}_nw_{n-1}=\lam w_n
\]
for all $n\in\Z$. Since $w_0=v_0=u_0=0$ it follows that there exist
$\gam$ such that $w_n=\gam \widehat{c}_n$ for all $n\geq 1$. But
$|c_n|=1$ for all $n$, so one obtains $|\gam|=1$ by evaluating this
identity for $n=1$. Therefore $|\widehat{u}_n|=|w_n|=|v_n|=|u_{-n}|$
for all $n\geq 1$.
\end{proof}

\begin{corollary}\label{uevenodd}
Let $u_+,\, u_-,\, u_e:\Z\to \C$ be the solutions of
$u_{n+1}+c_nu_{n-1}=\lam u_n$ for all $n\in\Z$ subject to $u_0=0$
and $u_1=1$, if $c$ is put equal to $c_+,\, c_-,\, c_e$
respectively. Then $u_{+,n}=u_{e,n}$ for all $n\in\Z$. Moreover
$|u_{-,n}|=| u_{e,n}|$ for all $n\in\Z$.
\end{corollary}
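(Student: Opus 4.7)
The approach is a direct comparison using the three-term recurrence $u_{n+1}=\lambda u_n - c_n u_{n-1}$ (and its backward form $u_{n-1}=c_n^{-1}(\lambda u_n - u_{n+1})$), exploiting that each of the pairs $(c_+,c_e)$ and $(c_-,c_e)$ agrees everywhere except at a single index near the origin. No machinery beyond the definitions is needed.

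First I would compare $c_+$ with $c_e$ entrywise. From the stated definitions, $c_{+,0}=c_{e,0}=1$ and $c_{+,n}=c_{e,n}$ for all $n\neq 0,1$, so the two sequences differ only at $n=1$, where $c_{+,1}=-1$ and $c_{e,1}=1$. Going forward, $u_2=\lambda u_1-c_1 u_0=\lambda$ irrespective of $c_1$ because $u_0=0$, so $u_{+,2}=u_{e,2}$. For $n\geq 2$ the computation of $u_{n+1}$ uses only $c_n$ with $n\geq 2$, where $c_+$ and $c_e$ coincide; by induction $u_{+,n}=u_{e,n}$ for all $n\geq 0$. The backward recursion only ever invokes $c_n$ with $n\leq 0$, where $c_+$ and $c_e$ also coincide, so likewise $u_{+,n}=u_{e,n}$ for all $n\leq 0$. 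This proves the first assertion.

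Next I would compare $c_-$ with $c_e$. These differ only at $n=0$: $c_{-,0}=-1\neq 1=c_{e,0}$, while $c_{-,1}=1=c_{e,1}$ and $c_{-,n}=c_{e,n}$ for all other $n$. Going forward, $u_{n+1}$ for $n\geq 1$ depends only on $c_n$ with $n\geq 1$, so $u_{-,n}=u_{e,n}$ for all $n\geq 0$. Going backward, the formula $u_{-1}=-c_0^{-1}$ yields $u_{-,-1}=1$ whereas $u_{e,-1}=-1$, so $u_{-,-1}=-u_{e,-1}$. Since $c_{-,n}=c_{e,n}$ for all $n\leq -1$, the backward recursion is linear with identical coefficients beyond $n=0$, so the sign flip propagates: by induction $u_{-,n}=-u_{e,n}$ for all $n\leq -1$. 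Combined with $u_{-,n}=u_{e,n}$ for $n\geq 0$, this gives $|u_{-,n}|=|u_{e,n}|$ for all $n\in\Z$.

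Main obstacle: there is no substantial obstacle, only two small conceptual points to get right. The first is the observation that the single-index discrepancy between $c_+$ and $c_e$ at $n=1$ is invisible to the forward recursion because it is multiplied by $u_0=0$. The second is that the discrepancy between $c_-$ and $c_e$ at $n=0$ produces a pure sign flip at $n=-1$ which then propagates cleanly under the linear backward recursion because $c_-$ and $c_e$ agree for all $n\leq -1$. An alternative route via Lemma~\ref{uflip} and Lemma~\ref{spaceinversion} (noting that $\widehat{c_+}=c_-$ and, in fact, $\widehat{c_e}=c_e$) would also work, but the direct approach above seems shorter.
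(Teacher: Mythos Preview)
Your proof is correct. For the first assertion both you and the paper do the same ``elementary computation'', but for the second assertion your route is genuinely different from the paper's. The paper invokes the space-inversion Lemma~\ref{uflip}: from $c_-=\widehat{c_+}$ one gets $|u_{-,n}|=|u_{+,-n}|$, then the first part gives $|u_{+,-n}|=|u_{e,-n}|$, and finally $\widehat{c_e}=c_e$ plus Lemma~\ref{uflip} again gives $|u_{e,-n}|=|u_{e,n}|$. You instead argue directly from the recurrence, observing that the single discrepancy $c_{-,0}\neq c_{e,0}$ flips the sign of $u_{-1}$ (with the harmless base case $u_0=0=-0$), and then linearity with identical coefficients for $n\leq -1$ propagates this to $u_{-,n}=-u_{e,n}$ for all $n\leq -1$. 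Your argument is shorter and in fact yields the sharper conclusion $u_{-,n}=\pm u_{e,n}$ with the sign explicit; the paper's route has the advantage of exercising the inversion symmetry and, as a by-product, establishing $|u_{e,n}|=|u_{e,-n}|$, which is used elsewhere (e.g.\ in passing from Proposition~\ref{CCLprop} to Theorem~\ref{CCLprop3}). You already anticipated the alternative via Lemma~\ref{uflip} in your final remark, so there is nothing to add.
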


\begin{proof}
The first statement is proved by an elementary computation. For the
second we use $c_e=\widehat{c_e}$ and $c_-=\widehat{c_+}$.
Lemma~\ref{uflip} now yields
\[
|u_{-,n}|=|u_{+,-n}|=|u_{e,-n}|=|u_{e,n}|
\]
for all $n\in\Z$.
\end{proof}

The main step in the proof of Theorem \ref{maintheorem} in \cite{CCL} is contained in the following proposition (we quote here the parts of
\cite[Proposition~2.1]{CCL} which we use immediately or later in section \ref{secondproofsection}).

\begin{proposition}\label{CCLprop}
Let $u_e$ be defined as in Corollary \ref{uevenodd} and define $p_{i,j}\in \Z$ for $i,j\in\N$ by the formula
\[
u_{e,i}=\sum_{j=1}^i p_{i,j}\lam^{j-1}
\]
with $p_{i,j}=0$ if $j>i$. Let $Y$ denote the set of $(i,j)\in \N^2$ such that $p_{i,j}\neq 0$. Then $p_{i,j}\in\{0,1,-1\}$ for all $i,\, j$ and $(i,j)\in Y$ if and only if one of the following holds.\\
(1) $i=j=1$;\\
(2) $i$ and $j$ are both even and $(i/2,j/2)\in Y$;\\
(3) $i$ and $j$ are both odd and $((i+1)/2,(j+1)/2)\in Y$;\\
(4) $i$ and $j$ are both odd and $((i-1)/2,(j+1)/2)\in Y$.
\end{proposition}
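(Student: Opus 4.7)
The plan is to prove the characterisation by strong induction on $i$, built on two self-similarity (``halving'') identities for the polynomials $P_n := u_{e,n}$. Because $c_e$ agrees with $c_+$ away from indices $0,1$ and $u_{e,0}=0$, the sequence $(u_{e,n})_{n\ge 0}$ coincides with the solution of $A_{c_+} u = \lam u$ with $u_0=0$, $u_1=1$, so I may work with $c_+$ throughout.

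\textbf{Identity A}: $P_{2m}(\lam) = \lam P_m(\lam^2)$. Since $c_+=\Gam_+(c_+)$, Lemma~\ref{square} tells us that the restriction of $A_{c_+}^2$ to the even subspace $\cH_e$ is again (a copy of) $A_{c_+}$. Hence $v_m := u_{2m}$ satisfies $A_{c_+} v = \lam^2 v$ with $v_0 = 0$, $v_1 = \lam$, which by uniqueness of the initial-value problem forces $v_m = \lam P_m(\lam^2)$.

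\textbf{Identity B}: $P_{2m+1}(\lam) = P_{m+1}(\lam^2) + \alpha_m P_m(\lam^2)$ with $\alpha_m := -c_{+,2m} \in \{\pm 1\}$. I would prove this by induction on $m$. Substitute the ansatz into the recurrence $P_{2m+1} = \lam P_{2m} - c_{+,2m} P_{2m-1}$, invoke Identity A for $P_{2m}$ and the inductive form of Identity B for $P_{2m-1}$, and rewrite $\lam^2 P_m(\lam^2) = P_{m+1}(\lam^2) + c_{+,m} P_{m-1}(\lam^2)$ via the $P$-recurrence. Comparing with the ansatz forces $\alpha_m = -c_{+,2m}$ and leaves a single consistency condition, which reduces to $c_{+,2m-2}\, c_{+,2m} = -c_{+,m}$; this is a short direct calculation from the two defining relations $c_{+,2n}+c_{+,2n+1}=0$ and $c_{+,2n}c_{+,2n-1}=c_{+,n}$.

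Given Identities A and B, the proposition follows by strong induction on $i$. For $i=2m$, Identity A yields $p_{2m,2j}=p_{m,j}$ and $p_{2m,j}=0$ when $j$ is odd, which matches case~(2) exactly and rules out membership in $Y$ for mixed-parity indices. For $i=2m+1$, Identity B yields $p_{2m+1,j}=0$ for $j$ even and $p_{2m+1,2k+1}=p_{m+1,k+1}+\alpha_m p_{m,k+1}$, matching the disjunction of cases~(3) and~(4) \emph{provided} the two summands never cancel. To rule out cancellation I would invoke a parity remark extractable from the inductive hypothesis: in cases (1)--(4) one always has $n$ and $j$ of the same parity, so $p_{m,k+1}$ and $p_{m+1,k+1}$ cannot both be nonzero (the boundary case $m=0$ is trivial). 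The same argument simultaneously preserves $p_{i,j}\in\{-1,0,1\}$ through the induction. The main technical obstacle is Identity B: identifying $\alpha_m=-c_{+,2m}$ and checking its self-consistency rests on the specific arithmetic of the fixed-point sequence $c_+$; the rest is organisational.
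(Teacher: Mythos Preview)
Your argument is correct. The two self-similarity identities you isolate are exactly what is needed, and your verification of Identity~B goes through: the consistency condition $c_{+,2m-2}\,c_{+,2m}=-c_{+,m}$ is an immediate consequence of combining $c_{+,2m}c_{+,2m-1}=c_{+,m}$ with $c_{+,2m-1}=-c_{+,2m-2}$. The parity observation (that $(n,\ell)\in Y$ forces $n\equiv\ell\pmod 2$) is indeed a direct by-product of the inductive hypothesis via cases (1)--(4), and it cleanly rules out cancellation in the sum $p_{m+1,\ell}+\alpha_m\,p_{m,\ell}$, so both the characterisation of $Y$ and the bound $p_{i,j}\in\{-1,0,1\}$ propagate.

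It is worth noting that the present paper does not itself prove Proposition~\ref{CCLprop}: it is quoted from \cite{CCL}, where it is established by a direct combinatorial induction on the coefficient array. Your route is genuinely different and, in a sense, more in the spirit of this paper: you derive Identity~A as an immediate consequence of Lemma~\ref{square} (the fixed-point property $\Gam_+(c_+)=c_+$ makes the even restriction of $A_{c_+}^2$ a copy of $A_{c_+}$), and then obtain Identity~B by a short recurrence computation. This has the advantage of explaining \emph{why} the self-similar structure of $Y$ arises --- it is the coefficient-level shadow of the operator identity $A_{c_+}^2\big|_{\cH_e}\cong A_{c_+}$ --- whereas the approach in \cite{CCL} verifies the pattern without invoking $\Gam_+$. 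The price is that your proof of Identity~B still requires a small amount of sequence arithmetic specific to $c_+$; that seems unavoidable.
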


The following result is an immediate corollary of this proposition and Corollary \ref{uevenodd}, which together imply that $|u_{e,i}|\leq (1-|\lambda|)^{-1}$ for $i\in \Z$ and $|\lambda|<1$.

\begin{theorem} \cite{CCL} \label{CCLprop3}
As in Corollary \ref{uevenodd}, let $u_e:\Z\to \C$ be the solution of
$u_{n+1}+c_nu_{n-1}=\lam u_n$ for all $n\in\Z$ subject to $u_0=0$
and $u_1=1$, with $c=c_e$. Then $u_e\in \ell^\infty(\Z)$ for $|\lam|<1$, so that $\Spec(A_{c_e})$ contains the unit disc.
\end{theorem}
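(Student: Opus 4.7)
My plan is to read off the bound $|u_{e,i}|\leq (1-|\lam|)^{-1}$ directly from the structural properties asserted in Proposition~\ref{CCLprop} and Corollary~\ref{uevenodd}, and then convert this $\ell^\infty$ bound into a spectral statement using the same $\ell^\infty$--$\ell^2$ non-invertibility argument invoked in Case~1 of Theorem~\ref{spectralequality}.

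First, for $i\geq 1$, Proposition~\ref{CCLprop} provides the expansion $u_{e,i}=\sum_{j=1}^i p_{i,j}\lam^{j-1}$ with each coefficient $p_{i,j}\in\{-1,0,1\}$. The crude triangle-inequality estimate then gives
\[
|u_{e,i}|\;\leq\;\sum_{j=1}^i |p_{i,j}|\,|\lam|^{j-1}\;\leq\;\sum_{j=0}^\infty |\lam|^j\;=\;\frac{1}{1-|\lam|}
\]
whenever $|\lam|<1$, uniformly in $i\in\N$. Notice that the fact $|p_{i,j}|\leq 1$ is all that is needed; the delicate combinatorial description of $Y$ in conditions (1)--(4) of Proposition~\ref{CCLprop} does not have to be used any further at this stage.

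Next I would extend the bound to negative indices via the symmetry $|u_{e,n}|=|u_{e,-n}|$. This follows from Lemma~\ref{uflip} applied to $c=c_e$, together with the identity $\widehat{c_e}=c_e$ noted in the proof of Corollary~\ref{uevenodd}: the two initial conditions $u_0=0$, $u_1=1$ are preserved, so the solution $\widehat u$ produced by Lemma~\ref{uflip} coincides with $u_e$, giving $|u_{e,n}|=|\widehat u_n|=|u_{e,-n}|$ for all $n\in\Z$. Combining with the previous paragraph, $\|u_e\|_{\ell^\infty(\Z)}\leq (1-|\lam|)^{-1}$, and $u_e$ is non-trivial since $u_{e,1}=1$.

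Finally I would convert this into the spectral statement. The sequence $u_e\in\ell^\infty(\Z)$ is a non-zero element of the kernel of $A_{c_e}-\lam I$ viewed as an operator on $\ell^\infty(\Z)$. Hence $A_{c_e}-\lam I$ is not invertible on $\ell^\infty(\Z)$, and, by the general principle for band operators cited earlier in the paper (Theorem~2.5.2 of \cite{RRS}, used in the proof of Theorem~\ref{spectralequality}), it is not invertible on $\ell^2(\Z)$ either. Therefore $\lam\in\Spec(A_{c_e})$ for every $\lam$ in the open unit disc, and taking the closure (spectra are closed) yields that $\Spec(A_{c_e})$ contains the closed unit disc. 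The only step that requires any thought beyond quoting prior results is the passage from the bounded formal solution to membership in the spectrum, and that is handled by the cited $\ell^\infty$--$\ell^2$ invertibility equivalence rather than by constructing a Weyl sequence by hand.
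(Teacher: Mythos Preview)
Your proof is correct and follows exactly the approach the paper sketches: the paper states just above the theorem that the result ``is an immediate corollary of this proposition and Corollary~\ref{uevenodd}, which together imply that $|u_{e,i}|\leq (1-|\lambda|)^{-1}$ for $i\in\Z$ and $|\lambda|<1$,'' and you have simply written out those details, together with the $\ell^\infty$--$\ell^2$ passage to the spectral conclusion (which the paper leaves implicit).
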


Since Corollary \ref{uevenodd} has shown that $u_{+}=u_{e}$, it is clear from Theorem \ref{CCLprop3} that $\Spec(A_{c_+})$ also contains the unit disc. In fact this is precisely its spectrum.

\begin{theorem}\label{c+spectrum}
If $c_+$ is the unique solution of $\Gam_+(c)=c$ then
\[
\Spec(A_{c_+})=\{ z:|z|\leq 1\}.
\]
\end{theorem}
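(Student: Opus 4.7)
The plan is to combine the lower bound already in hand with a spectral-mapping argument exploiting that $c_+$ is a fixed point of $\Gam_+$.

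For the lower bound $\{z:|z|\leq 1\}\subseteq \Spec(A_{c_+})$, I would simply quote Theorem~\ref{CCLprop3} together with Corollary~\ref{uevenodd}. Indeed, Corollary~\ref{uevenodd} asserts $u_{+,n}=u_{e,n}$ for all $n$, so the bounded solution $u_e$ of the eigenvalue equation for $A_{c_e}$ provided by Theorem~\ref{CCLprop3} is equally a bounded solution for $A_{c_+}$, giving $\Spec(A_{c_+})\supseteq \{|\lam|<1\}$, whose closure is the closed unit disc, and $\Spec(A_{c_+})$ is closed.

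For the matching upper bound $\Spec(A_{c_+})\subseteq \{z:|z|\leq 1\}$, I would use the defining property $\Gam_+(c_+)=c_+$. Applying Lemma~\ref{square} with $b=c_+$ (so $c=\Gam_+(b)=c_+$) yields
\[
\Spec(A_{c_+}^2)=\Spec(A_{c_+}).
\]
On the other hand, the polynomial spectral mapping theorem gives $\Spec(A_{c_+}^2)=\{\lam^2:\lam\in \Spec(A_{c_+})\}$. Writing $S_+=\Spec(A_{c_+})$, we therefore have $S_+=\{\lam^2:\lam\in S_+\}$; in particular $S_+$ is closed under the map $\lam\mapsto \lam^2$.

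The upper bound now follows from boundedness. Since $A_{c_+}=V_{c_+}R+L$ with each factor of norm $1$, we have $\norm A_{c_+}\norm\leq 2$, so $S_+$ is bounded. If some $\lam\in S_+$ satisfied $|\lam|>1$, then iterating the squaring map would force $\lam^{2^n}\in S_+$ for every $n$, with $|\lam^{2^n}|\to\infty$, contradicting boundedness. Hence every $\lam\in S_+$ satisfies $|\lam|\leq 1$, and combined with the lower bound we conclude $S_+=\{z:|z|\leq 1\}$.

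There is essentially no hard step here, given the results already assembled in the paper: the whole proof is a two-line application of the fixed-point property $\Gam_+(c_+)=c_+$ through Lemma~\ref{square} plus the observation that a bounded subset of $\C$ closed under squaring lies in the closed unit disc. The only subtlety worth flagging is that Lemma~\ref{square} only directly provides the set-equality of spectra, so one really must invoke the polynomial spectral mapping theorem to translate $\Spec(A_{c_+}^2)=\Spec(A_{c_+})$ into the self-similarity $S_+=\{\lam^2:\lam\in S_+\}$ that drives the boundedness argument.
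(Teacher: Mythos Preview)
Your proof is correct and follows essentially the same route as the paper. The paper phrases the upper bound via repeated applications of the first part of Lemma~\ref{squareroot} (with $b=c=c_+$) to obtain $\lam^{2^n}\in\Spec(A_{c_+})$ and then appeals to boundedness; but Lemma~\ref{squareroot} is itself derived from Lemma~\ref{square} together with the spectral mapping identity $\Spec(A_c^2)=(\Spec(A_c))^2$, so your direct invocation of Lemma~\ref{square} plus the polynomial spectral mapping theorem is the same argument with one layer of packaging removed.
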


\begin{proof}
It remains only to show that $\Spec(A_{c_+})\subset\{ z:|z|\leq 1\}$. If $\lam\in \Spec(A_{c_+})$ then repeated applications of the first
part of Lemma~\ref{squareroot} yield $\lam^{2^n}\in \Spec(A_{c_+})$
for all $n\geq 1$. Since the spectrum is a bounded set, it follows
that $|\lam|\leq 1$.
%The proof that $\Spec(A_{c_+})\supseteq\{ z:|z|\leq 1\}$ uses the
%method of \cite{CCL}. That paper uses $c_e$, but this is unimportant
%because $u_e=u_+$ by Corollary~\ref{uevenodd}.
\end{proof}

We will (rather arbitrarily) focus on the mapping $\Gamma_+$ rather than $\Gamma_-$ in the remainder of the paper. The following lemma, which shows   that the set of periodic sequences
is invariant under the action of
$\Gam_{+}$, will play a key role.

\begin{lemma} \label{lem_periodic} If $b\in \Omega$ is periodic with
period $N$, i.e.\ $b_{n+N} = b_n$, $n\in\Z$,
then $c=\Gam_{+}(b)$ is $4N$-periodic.
Conversely, if $b\in\Omega$, $c=\Gam_{+}(b)$, and $c$
is $2N$-periodic for some $N\in\N$, then $b$ is $N$-periodic.
\end{lemma}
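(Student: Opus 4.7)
The plan is to extract an explicit recursion for $c$ in terms of $b$ from the defining relations \eqref{cfixed0}, and then read off both implications directly from that recursion.

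First I would note that $c_{2n+1}=-c_{2n}$ for all $n\in\Z$, so $c$ is completely determined by its values at even indices. Substituting $c_{2n-1}=-c_{2n-2}$ into $c_{2n}c_{2n-1}=b_n$ and using $c_{2n-1}^2=1$ gives the two-step recursion $c_{2n}=-b_n\, c_{2n-2}$. Iterating this from $c_0=1$ yields the closed form
\[
\frac{c_{2n}}{c_{2m}}=(-1)^{n-m}\,b_{m+1}b_{m+2}\cdots b_n \qquad (n>m),
\]
valid for all integers $m,n$ (the case $n<m$ is obtained by inverting, using $b_k=\pm 1$).

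For the forward direction, assume $b$ is $N$-periodic. Applying the displayed identity with $m=n$ replaced by $n$ and $n$ replaced by $n+2N$, I get
\[
\frac{c_{2n+4N}}{c_{2n}} = (-1)^{2N}\,b_{n+1}b_{n+2}\cdots b_{n+2N} = \bigl(b_{n+1}\cdots b_{n+N}\bigr)\bigl(b_{n+N+1}\cdots b_{n+2N}\bigr).
\]
By the $N$-periodicity of $b$ the two bracketed products are equal, and since $b_k\in\{\pm1\}$ each such product squares to $1$. Hence $c_{2n+4N}=c_{2n}$ for every $n\in\Z$, and then $c_{2n+1+4N}=-c_{2n+4N}=-c_{2n}=c_{2n+1}$, so $c$ is $4N$-periodic.

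For the converse, suppose $c$ is $2N$-periodic. The third relation in \eqref{cfixed0} expresses $b$ directly in terms of $c$: $b_n=c_{2n}c_{2n-1}$. Therefore
\[
b_{n+N}=c_{2n+2N}\,c_{2n+2N-1}=c_{2n}\,c_{2n-1}=b_n
\]
for every $n\in\Z$, which is the $N$-periodicity of $b$. There is no real obstacle in either direction; the only thing worth checking carefully is the sign bookkeeping in the forward direction, where the factor $(-1)^{2N}=1$ in the product and the squaring of $\pm 1$ entries are what force the period to be $4N$ rather than $2N$ in general.
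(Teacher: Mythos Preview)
Your proof is correct and follows essentially the same approach as the paper's: the paper introduces the auxiliary sequence $\tilde c_n := c_{2n}$, derives the same recursion $\tilde c_n = -b_n \tilde c_{n-1}$ and product formula $\tilde c_{m+n} = \tilde c_m(-1)^n\prod_{j=1}^n b_{m+j}$, and then argues both directions exactly as you do (advancing by $2N$ in the forward direction so the product becomes a square, and reading $b_n$ back from $c$ in the converse). The only difference is cosmetic---you work directly with $c_{2n}$ rather than naming it $\tilde c_n$.
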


\begin{proof}
First note that, if $c=\Gam_{+}(b)$ and one defines $\tilde c\in
\Omega$ by $\tilde c_n=c_{2n}$, $n\in\Z$, then
\begin{equation}
c=\Gam_{+}(b) \Leftrightarrow (\tilde c_0=1, \quad \tilde c_n =
-b_n \tilde c_{n-1}, \; c_{2n+1} = - \tilde c_n, \;
n\in\Z).\label{equiv}
\end{equation}
Therefore
\begin{equation}
\tilde c_{m+n} = \tilde c_m\,(-1)^n\prod_{j=1}^n
b_{m+j}\label{equiv2}
\end{equation}
 for all $m\in\Z$ and $n\in\N$. If $b$ is $N$-periodic, then
\[
\tilde c_{m+2N} = \tilde c_m\,\prod_{j=1}^{2N} b_{m+j} =
\tilde c_m\,\prod_{j=1}^{N} b_{m+j}^2 = \tilde c_m,
\]
for all $m\in\Z$. Therefore $c$ is $4N$-periodic.

Conversely, if $c=\Gam_{+}(b)$, for some $b\in
\Omega$, and $c$ is $2N$-periodic for some $N\in\N$, then
$\tilde c$ is $N$-periodic and, from \eqref{equiv}, it follows that
$b$ is $N$-periodic.
\end{proof}

To illustrate the above lemma, define $c^-,c^+ \in \Ome$ by $c^-_n =  -1$, $c^+_n=1$, for $n\in\Z$, and define the sequences $c^{(m,+)}, c^{(m,-)}\in \Ome$, for $m=0,1,...$, by
\begin{equation} \label{periodic_sequences}
c^{(0,\pm)} = c^\pm, \quad c^{(m,\pm)} = \Gamma_+(c^{(m-1,\pm)}), \quad m\in \N.
\end{equation}
Then explicit calculations of the action of $\Gamma_+$ yield that
$c^{(1,+)}=\Gam_{+}(c^+)$ is $4$-periodic (but not periodic with any
smaller period), with $c^{(1,+)}_{-1}=c^{(1,+)}_0=1$, $c^{(1,+)}_1=c^{(1,+)}_2=-1$. On the other hand, $c^{(1,-)}=\Gam_{+}(c^-)$ is $2$-periodic  (and so also
$4$-periodic), with $c^{(1,-)}_n = (-1)^n$ for all $n\in\Z$.

Both these calculations, of course, are consistent with the lemma, which implies that $c^{(m,\pm)}$ is $N$-periodic with $N=4^m$, so that, using the notation \eqref{eq:piinf} (dropping $\sig$ given that $\sig=1$),
\begin{equation} \label{inclusion}
\Spec(A_{c^{(m,\pm)}}) \subset \pi_{4^m}, \quad m=0,1,... .
\end{equation}

Although we do not have an explicit formula for the sequences $c^{(m,\pm)}$, it is easy to compute $\Spec(A_{c^{(m,\pm)}})$. By Lemma \ref{squareroot}, if $c=\Gamma_+(b)$, then
\begin{equation} \label{speciter}
\Spec(A_c) = \{\pm \sqrt{\lambda}: \lambda \in \Spec(A_b)\}.
\end{equation}
The proof of Theorem \ref{maintheorem} begins with the observation that $\Spec(A_{c^+})=[-2,2]$ and $\Spec(A_{c^-})=i[-2,2]$. Combining this observation with \eqref{speciter} we easily prove by induction that
\begin{equation} \label{spec_cplus}
\Spec(A_{c^{(m,+)}}) = \left\{r\,\rme^{\pi i j/2^{m}}: 0\leq r \leq 2^{1/2^m}, \;j\in \{0,...,2^{m+1}-1\}\right\}
\end{equation}
and
\begin{equation} \label{spec_cminus}
\Spec(A_{c^{(m,-)}}) =\rme^{\pi i/2^{m+1}}\,\Spec(A_{c^{(m,+)}}).
\end{equation}

Combining equations \eqref{inclusion}, \eqref{spec_cplus} and \eqref{spec_cminus}, we see that we have shown that
$$
\left\{r\,\rme^{\pi i j/2^{m}}: 0\leq r \leq 2^{1/2^{m+1}}, \;j\in \{0,...,2^{m+2}-1\}\right\}\subset \pi_{4^m}, \quad m=0,1,... .
$$
Thus we have shown the following modification of Theorem \ref{maintheorem} which, of course, by \eqref{eq:pidef}, has Theorem \ref{maintheorem} as a corollary.

\begin{theorem} \label{denseness}
The set $\pi_\infty$ contains the set \eqref{the_set}, and so is dense in the unit disc in $\C$.
\end{theorem}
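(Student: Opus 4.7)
The plan is to follow exactly the road-map that the preceding discussion sets up: exhibit, for each $m\in\Z_+$, explicit periodic sequences whose spectra can be computed in closed form and whose union gives the set in (\ref{the_set}). The key machinery is already in place---Lemma~\ref{lem_periodic} (periodicity is preserved by $\Gam_+$) and Lemma~\ref{squareroot} (spectrum transforms by square-roots under $\Gam_+$)---and nothing else is really needed.

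First I would introduce the sequences $c^{(m,\pm)}$ as in (\ref{periodic_sequences}) and note, by induction on $m$ using Lemma~\ref{lem_periodic}, that $c^{(m,\pm)}$ is $4^m$-periodic. This immediately gives the inclusion $\Spec(A_{c^{(m,\pm)}})\subset \pi_{4^m}$, which is the ingredient that converts spectra of individual operators into subsets of $\pi_\infty$.

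Next I would compute $\Spec(A_{c^{(m,\pm)}})$ by induction on $m$, using the base cases $\Spec(A_{c^+})=[-2,2]$ and $\Spec(A_{c^-})=i[-2,2]$ (each being just the spectrum of the free discrete Laplacian, possibly rotated via (\ref{irotate})). The inductive step is (\ref{speciter}): writing a generic point in $\Spec(A_{c^{(m-1,+)}})$ as $r\,\rme^{\pi i j/2^{m-1}}$ with $0\leq r\leq 2^{1/2^{m-1}}$ and $j\in\{0,\dots,2^{m}-1\}$, its two square-roots are $\sqrt r\,\rme^{\pi i j/2^{m}}$ and $\sqrt r\,\rme^{\pi i (j+2^{m})/2^{m}}$, which together realise exactly the angles $\pi k/2^{m}$ for $k\in\{0,\dots,2^{m+1}-1\}$ with radii up to $2^{1/2^{m}}$. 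This yields (\ref{spec_cplus}); then (\ref{spec_cminus}) follows from the identity $\Spec(A_c)=i\,\Spec(A_{-c})$ of (\ref{irotate}) combined with the fact that $c^{(m,-)}$ differs from $c^{(m,+)}$ by the overall rotation that, at the level of spectrum, amounts to multiplication by $\rme^{\pi i/2^{m+1}}$ after $m$ applications of $\Gam_+$ (half an angle step per square-root).

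Finally I would take the union of the two families: $\Spec(A_{c^{(m,+)}})$ provides the rays at even multiples of $\pi/2^{m+1}$, $\Spec(A_{c^{(m,-)}})$ provides the odd multiples, and together they give all of the rays indexed by $r\in\{0,\dots,2^{m+2}\}$ in (\ref{the_set}) with radius up to $2^{1/2^{m}}$. Taking the union over $m\in\Z_+$ then shows $\pi_\infty$ contains the set (\ref{the_set}). Density in the closed unit disc is immediate, since each dyadic angle $\pi k/2^{m+1}$ is eventually hit and the maximal radius $2^{1/2^{m}}\to 1$ from above, so every point of the closed unit disc is a limit of points on some ray already fully contained in $\pi_\infty$. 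The only delicate---though purely bookkeeping---step is the angle count in the induction, and interleaving the two families $c^{(m,\pm)}$ to recover every dyadic angle; I expect this to be where most of the writing goes, but no new idea beyond Lemmas~\ref{lem_periodic} and \ref{squareroot} is required.
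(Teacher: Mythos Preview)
Your approach is essentially identical to the paper's: define the periodic sequences $c^{(m,\pm)}$, invoke Lemma~\ref{lem_periodic} to get \eqref{inclusion}, compute the spectra inductively via \eqref{speciter} to obtain \eqref{spec_cplus} and \eqref{spec_cminus}, and then combine the two families to recover all the dyadic rays in \eqref{the_set}.

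One small point: your justification of \eqref{spec_cminus} via \eqref{irotate} is not quite right as written, since $c^{(m,-)}$ is \emph{not} $-c^{(m,+)}$ (indeed, as the paper observes, for $m=1$ they do not even have the same period). The clean route---and the one the paper takes---is simply to run the \emph{same} induction for the minus family, starting from $\Spec(A_{c^{(0,-)}})=i[-2,2]=\rme^{\pi i/2}\Spec(A_{c^{(0,+)}})$ and noting that each application of \eqref{speciter} halves the rotation angle. This gives \eqref{spec_cminus} directly without any appeal to a relationship between the sequences themselves.
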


We know $\Spec(A_{c^{(m,\pm)}})$ explicitly, but do not have explicit formulae for the sequences $c^{(m,\pm)}$. However we can show that $c^{(m,\pm)}$ converges pointwise to the sequence $c_+$, the unique fixed point of $\Gamma_+$, as $m\to \infty$. This is the content of the next two lemmas. We omit a proof of the first of these lemmas which is an easy consequence, by simple induction arguments, of the definition of $\Gam_+$.

\begin{lemma} \label{maplem}
 If $b\in\Omega$ and $c=\Gamma_+(b)$, then $c_0=c_{+,0}$ and $c_1 =c_{+,1}$. If, for some $N\in\N$, $b_m=c_{+,m}$ for $m = 1,...,N$, then also $c_m = c_{+,m}$ for $m = 2,...,2N+1$. If, for some $N\in\Z_+$, $b_{-m}=c_{+,-m}$ for $m = 0,1,...,N$, then $b_{-m}=c_{+,-m}$ for $m = 1,2,...,2N+2$.
\end{lemma}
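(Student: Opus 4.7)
The plan is that this lemma is really a bookkeeping exercise in unwinding the defining relations \eqref{cfixed0} for $\Gamma_+$, using the fact that $c_+$ satisfies exactly the same relations with $b$ replaced by $c_+$ itself. Since every entry of $c_+$ is $\pm 1$ and thus non-zero, the equation $c_{2n}c_{2n-1}=b_n$ can be inverted at every step.

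For the first assertion I would simply read off $c_0 = 1 = c_{+,0}$ from the first equation of \eqref{cfixed0}, and $c_1 = -c_0 = -1 = c_{+,1}$ from the $n=0$ case of the second equation, noting that these values are forced independently of $b$.

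For the forward statement I would carry out an induction on $n$ from $1$ to $N$, maintaining the hypothesis $c_{2n-1}=c_{+,2n-1}$. The inductive step combines the hypothesis $b_n = c_{+,n}$ with the fixed-point identity $c_{+,2n}c_{+,2n-1}=c_{+,n}$ and the defining identity $c_{2n}c_{2n-1}=b_n$, yielding $c_{2n}c_{+,2n-1}=c_{+,2n}c_{+,2n-1}$; since $c_{+,2n-1}=\pm 1$ this cancels to $c_{2n}=c_{+,2n}$, after which $c_{2n+1}=-c_{2n}=c_{+,2n+1}$. Running the induction to $n=N$ produces agreement on all even and odd indices from $2$ to $2N+1$.

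For the backward statement (which I read as concluding $c_{-m}=c_{+,-m}$ for $m=1,\ldots,2N+2$, the conclusion about $b$ being a typographical slip) I would run a symmetric backward induction starting from $c_0=1=c_{+,0}$. At index $-k$ with $0\le k\le N$, the relation $c_{-2k}c_{-2k-1}=b_{-k}$ paired with $b_{-k}=c_{+,-k}$ and the analogous identity for $c_+$ gives $c_{-2k-1}=c_{+,-2k-1}$ once $c_{-2k}=c_{+,-2k}$ is known; then $c_{-2k-2}=-c_{-2k-1}=c_{+,-2k-2}$ from the second equation of \eqref{cfixed0}. The key observation, which explains why the conclusion reaches all the way to $m=2N+2$ rather than stopping at $m=2N+1$, is that the last even index $c_{-2N-2}$ is produced without consuming any new $b$-value: it is forced purely by the sum-to-zero relation applied to the already-known $c_{-2N-1}$.

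I do not expect any genuine obstacle; the only point worth being careful about is the off-by-one count in the backward induction, i.e.\ tracking which $b_{-k}$ is used at which step and recognising that the final summation step is $b$-free, which is precisely what yields the $2N+2$ rather than $2N+1$ in the conclusion.
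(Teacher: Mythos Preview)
Your proposal is correct and matches precisely what the paper has in mind: the paper omits the proof entirely, saying only that it ``is an easy consequence, by simple induction arguments, of the definition of $\Gam_+$'', and your unwinding of the relations \eqref{cfixed0} against the fixed-point relations for $c_+$ is exactly that induction. Your reading of the third assertion as a typographical slip (the conclusion should concern $c_{-m}$ rather than $b_{-m}$) is also correct, as confirmed by how the lemma is used in the proof of Lemma~\ref{strong_converg}.
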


\begin{lemma} \label{strong_converg}
Let $b \in\Omega$, and define $c^{(n)}\in \Ome$ for $n\in \N$ by $c^{(1)}=\Gam_+(b)$ and $c^{(n+1)} = \Gam_+(c^{(n)})$, $n\in\N$. Then,  for $n\in\N$,
$$
c^{(n)}_m = c_{+,m}, \quad m = 2-2^n,3-2^n,...,2^n-1,
$$
so that $c^{(n)}\to c_+$ pointwise and $A_{c^{(n)}}$ converges strongly to $A_{c_+}$ as $n\to\infty$. Further,
$$
\Spec(A_{c^{(n)}}) \subset \{\lambda: |\lambda|\leq 2^{1/2^n}\}.
$$
\end{lemma}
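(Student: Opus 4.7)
My plan is to prove the three assertions (pointwise agreement on an expanding window, strong operator convergence, and the spectral bound) in that order, with the first being the workhorse and the other two following easily.

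The first assertion I would prove by induction on $n$, using Lemma~\ref{maplem} as the driver. For the base case $n=1$, we have $c^{(1)}=\Gam_+(b)$, and the first sentence of Lemma~\ref{maplem} gives directly $c^{(1)}_0=c_{+,0}$ and $c^{(1)}_1=c_{+,1}$, matching the range $\{2-2^1,\dots,2^1-1\}=\{0,1\}$. For the inductive step, assume $c^{(n)}_m=c_{+,m}$ for $m\in\{2-2^n,\dots,2^n-1\}$. Apply Lemma~\ref{maplem} with $b$ replaced by $c^{(n)}$ and $c$ replaced by $c^{(n+1)}=\Gam_+(c^{(n)})$: the hypothesis $c^{(n)}_m=c_{+,m}$ for $m=1,\dots,2^n-1$ (take $N=2^n-1$) yields $c^{(n+1)}_m=c_{+,m}$ for $m=2,\dots,2^{n+1}-1$, while the hypothesis $c^{(n)}_{-m}=c_{+,-m}$ for $m=0,1,\dots,2^n-2$ (take $N=2^n-2$) yields $c^{(n+1)}_{-m}=c_{+,-m}$ for $m=1,\dots,2^{n+1}-2$. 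Together with the always-valid $c^{(n+1)}_0=c_{+,0}$, $c^{(n+1)}_1=c_{+,1}$, this gives agreement on $\{2-2^{n+1},\dots,2^{n+1}-1\}$, closing the induction. The only bookkeeping point to verify is that the arithmetic $2(2^n-1)+1=2^{n+1}-1$ and $2(2^n-2)+2=2^{n+1}-2$ aligns exactly, which it does.

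For the second assertion, pointwise convergence $c^{(n)}\to c_+$ is immediate from the first assertion, since every fixed $m\in\Z$ eventually lies in $\{2-2^n,\dots,2^n-1\}$. Strong convergence of $A_{c^{(n)}}$ to $A_{c_+}$ then follows by a standard argument: the operators are uniformly bounded ($\|A_{c^{(n)}}\|\le 2$, since $A_c=L+V_cR$ with $\|L\|=\|R\|=1$ and $\|V_c\|=1$), and on any finitely supported vector $f$ the equality $A_{c^{(n)}}f=A_{c_+}f$ holds for all sufficiently large $n$ (once the window contains $\supp(f)\cup\{\supp(f)\pm 1\}$). Density of finitely supported vectors in $\ell^2(\Z)$ combined with the uniform bound then yields strong convergence.

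For the spectral bound, I would iterate Lemma~\ref{squareroot}. Since $c^{(n)}=\Gam_+(c^{(n-1)})$, that lemma gives $\Spec(A_{c^{(n-1)}})=\{\mu^2:\mu\in\Spec(A_{c^{(n)}})\}$, so any $\lam\in\Spec(A_{c^{(n)}})$ satisfies $\lam^2\in\Spec(A_{c^{(n-1)}})$; iterating down to $c^{(0)}:=b$ yields $\lam^{2^n}\in\Spec(A_b)$. Because $\|A_b\|\le 2$ by the same decomposition $A_b=L+V_bR$, we get $|\lam|^{2^n}\le 2$, i.e.\ $|\lam|\le 2^{1/2^n}$, as required.

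The argument is essentially bookkeeping: no step is a genuine obstacle, but one has to be careful that the two halves of Lemma~\ref{maplem} (propagation to the right, index range $2$ to $2N+1$, and propagation to the left, index range $1$ to $2N+2$) combine to exactly double the window at each iteration. I read the second indexing range in Lemma~\ref{maplem} as intended to refer to $c_{-m}$ rather than $b_{-m}$ in its conclusion, which is the only reading that makes the induction close.
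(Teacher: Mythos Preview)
Your proof is correct and follows essentially the same approach as the paper: induction via Lemma~\ref{maplem} for the first claim, and iteration of Lemma~\ref{squareroot} together with the trivial bound $\|A_b\|\le 2$ for the spectral inclusion. Your reading of the conclusion of the third clause of Lemma~\ref{maplem} as $c_{-m}=c_{+,-m}$ (rather than $b_{-m}$) is indeed the intended one, and your bookkeeping on the window-doubling is accurate.
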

\begin{proof} The first equation follows by induction from Lemma \ref{maplem}. The second equation follows by induction from \eqref{speciter} and the trivial bound that $\Spec(A_b)\subset \{\lambda:|\lambda|\leq 2\}$, which holds for all $b\in \Ome$.
\end{proof}

\section{The mapping $\Gamma_{\sig,+}$}\label{related}

For the rest of the paper we consider operators $A_c$ for which the
coefficients $c_n$ take values in $\{\pm \sig\}$, where $0<\sig \leq
1$; that is, in the notation we have introduced in the introduction, we assume that $c\in \Omega_\sig$, for some $\sig\in (0,1]$.

The mapping $\Gamma_+$ that we have introduced continues to play an important role. We extend the mapping so that it operates on $\Omega_{\sig^2}$, defining, for $\sig\in (0,1]$, $\Gamma_{\sig,+}:\Omega_{\sig^2}\to \Omega_\sig$ by
\begin{equation} \label{def_Gam_sig_plus}
\Gamma_{\sig,+}(c) = \sig \Gamma_+(\sig^{-2}c).
\end{equation}
In other words, for $b\in \Omega_{\sig^2}$, $c=\Gamma_{\sig,+}(b)$ is the unique sequence in $\Omega_\sig$ satisfying
\begin{equation}
c_0=\sig, \hspace{2em} c_{2n}+c_{2n+1}=0, \hspace{2em}
c_{2n}c_{2n-1}=b_n.\label{iteration2a}
\end{equation}

Main properties of the mapping $\Gamma_{\sig,+}$ for our purposes are contained in the following extension of Lemma \ref{squareroot}. We will need to refer to a
number of circular annuli, and use in this lemma and subsequently the notation
\begin{equation} \label{annuli}
\bkt{  a,b } =\{ \lam:a\leq |\lam |\leq b\}.
\end{equation}

\begin{lemma}\label{lemmaB}If $b\in\Ome_{\sig^2}$ and $c=\Gam_{\sig,+}(b)\in \Ome_\sig$, then
\begin{equation}
(\,\lam\in \Spec(A_b) \,)\Leftrightarrow (\, \pm \sqrt{\lam} \in
\Spec(A_c) \,).\label{sigmaroot0}
\end{equation}
Hence
\begin{equation}
(\,\lam\in S_{\sig^2}\,)\implies (\, \pm \sqrt{\lam} \in
S_\sig\,)\label{sigmaroot}
\end{equation}
and
\begin{equation}
(\, \bkt{a,b}\subseteq S_{\sig^2}\,)\implies%
 ( \, \bkt{a^{1/2},b^{1/2}}\subseteq S_{\sig}\,).\label{sigroot2}
\end{equation}
\end{lemma}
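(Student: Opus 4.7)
The plan is to generalise Lemma \ref{square} to the setting $c\in\Ome_\sig$, which gives the spectral equality $\Spec(A_c^2)=\Spec(A_b)$; the three displays of the lemma then follow by combining this with the spectral mapping theorem and Lemma \ref{symmetry}.

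First I would verify the equivalence \eqref{sigmaroot0} by direct computation. Writing $A_c=V_cR+L$ as in Lemma \ref{square}, one obtains
\[
(A_c^2 f)_n = c_nc_{n-1}f_{n-2}+(c_n+c_{n+1})f_n+f_{n+2},
\]
exactly as before. The defining relations \eqref{iteration2a} for $c=\Gam_{\sig,+}(b)$ give $c_{2n}c_{2n-1}=b_n$ and $c_{2n}+c_{2n+1}=0$, so restricting $A_c^2$ to $\cH_e=\{f:f_{2n+1}=0\}$ and relabeling $n\leftrightarrow 2n$ produces $(A_b g)_n = b_n g_{n-1}+g_{n+1}$. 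The restriction to $\cH_o$ yields an operator $M_b$ with $(M_b h)_n = -\sig^2 h_{n-1}+(c_{2n+1}+c_{2n+2})h_n+h_{n+1}$. At this point I would invoke Theorem \ref{spectralequality}: case 1 applies when $\sig=1$ and case 2 applies when $\sig\in(0,1)$ (taking $\bet=\sig$ and $\gam=1$, since $|A_{r,s}|=\sig$ on the subdiagonal and $|A_{r,s}|=1$ on the superdiagonal). Either way we conclude $\Spec(A_c^2)=\Spec(A_b)$.

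Next I would combine this with the spectral mapping theorem $\Spec(A_c^2)=\Spec(A_c)^2$ to get $\lam\in\Spec(A_b)\Leftrightarrow \sqrt{\lam}\in\Spec(A_c)$ or $-\sqrt{\lam}\in\Spec(A_c)$. To upgrade this to the two-sided statement $\pm\sqrt{\lam}\in\Spec(A_c)$, I would note that the proof of Lemma \ref{symmetry} only uses that $c_n\in\{\pm\sig\}$ takes values closed under negation, so the identity $DA_cD^{-1}=iA_{-c}$ and its iterate $\Spec(A_c)=-\Spec(A_c)$ hold verbatim for $c\in\Ome_\sig$. This yields \eqref{sigmaroot0}.

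The implication \eqref{sigmaroot} is then immediate from Proposition \ref{quasiergodic}: if $\lam\in S_{\sig^2}$, pick any $b\in\cE_{\sig^2}$ so that $\lam\in\Spec(A_b)$, set $c=\Gam_{\sig,+}(b)\in\Ome_\sig$, and apply \eqref{sigmaroot0} together with the observation $\Spec(A_c)\subseteq S_\sig$ (from the final display of Proposition \ref{quasiergodic}). Finally, \eqref{sigroot2} is a purely set-theoretic consequence of \eqref{sigmaroot}: each $\mu\in\bkt{a^{1/2},b^{1/2}}$ satisfies $\mu^2\in\bkt{a,b}\subseteq S_{\sig^2}$, and \eqref{sigmaroot} applied to $\lam=\mu^2$ gives $\pm\mu\in S_\sig$.

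The only potential obstacle is verifying the hypotheses of Theorem \ref{spectralequality} in the two regimes $\sig=1$ and $\sig<1$ simultaneously, and in particular confirming that the restriction of $A_c^2$ to $\cH_e$ really is $A_b$ itself (rather than some scalar multiple); the factor $c_{2n}c_{2n-1}=b_n$ built into the definition of $\Gam_{\sig,+}$ is exactly what makes this work. Once these book-keeping checks are in place, the rest is a short deduction.
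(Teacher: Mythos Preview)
Your proposal is correct and follows essentially the same route as the paper: modify Lemma~\ref{square} by applying Case~2 of Theorem~\ref{spectralequality} (with $\bet=\sig$, $\gam=1$) to obtain $\Spec(A_c^2)=\Spec(A_b)$, then combine with the spectral mapping theorem, the $\Spec(A_c)=-\Spec(A_c)$ symmetry from Lemma~\ref{symmetry}, and Proposition~\ref{quasiergodic}. Your write-up is in fact slightly more explicit than the paper's (you spell out the computation of $A_c^2$, the form of $M_b$, and the set-theoretic deduction of \eqref{sigroot2}), but there is no substantive difference in method.
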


\begin{proof}
We modify the calculations in Section~\ref{main}.
Lemma~\ref{symmetry} is valid as it stands. In Lemma~\ref{square} we
assume that $b\in \Ome_{\sig^2}$, and define $c\in \Ome_\sig$ by $c=\Gam_{\sig,+}(b)$, and apply Case 2 of Theorem \ref{spectralequality} in place of Case 1.
This leads to the conclusion $\Spec(A_c^2)= \Spec(A_b)$ as in
Lemma~\ref{squareroot}. (\ref{sigmaroot}) and (\ref{sigroot2})
follow by choosing $b\in\cE_{\sig^2}$ and using
Proposition~\ref{quasiergodic}.
\end{proof}

\section{Periodic and paired periodic operators} \label{periodic_ops}

To prove our main theorem  we need results on operators $A_c$ on
$\ell^2(\Z)$ that have one periodic structure for $n\geq 0$ and
another for $n<0$ (which we term paired periodic operators). The essential spectrum of such an operator is the
union of the essential spectra of the periodic operators involved,
which may be calculated explicitly using their Bloch decompositions.

There may also be substantial inessential spectrum, in particular, open subsets of the spectrum where $A_c-\lam I$ is Fredholm but has non-zero index. These parts of the spectrum (and the corresponding values of the index) can be computed by application of general results for block Toeplitz operators, which have been developed to a high degree of sophistication; see \cite{B2,BS,BK,BS2} and the references therein. We need only a small part of this theory, and it is easy to develop this from first principles. We do this in a short Lemma \ref{lem_two_periods} below, inspired by earlier analysis in \cite{DS, EBD0,EBD1}, and particularly
\cite[Theorem~12]{EBD1}.
%so that The analysis
%of such operators is very similar to that of Toeplitz and
%Wiener-Hopf  operators and has been developed to a high degree of
%sophistication in a variety of technical contexts; see
%\cite{B1,B2,BS,EBD2000,EBD0,DS,Re,RT}, and particularly
%\cite[Theorem~12]{EBD1}. We only need a very small part of this
%theory, and it is easy to develop this from first principles, which we do in Lemma \ref{lem_two_periods} below.
 Both the proof of Lemma \ref{lem_two_periods}, and the effective application of this lemma to prove Theorem \ref{maintheorem3}, depend on the next two lemmas which describe properties of the spectra and eigenfunctions of periodic operators.

We assume throughout this section that the parameter $\sig\in (0,1)$.

\begin{lemma}\label{quadraticbounds}
Let
\begin{equation}
\Phi(\tau,\gam)= \frac{\Re(\tau)^2}{(1+\gam)^2}+
\frac{\Im(\tau)^2}{(1-\gam)^2}\label{Phidef}
\end{equation}
where $\tau\in\C$ and $-1< \gam <1$. Then the quadratic equation
\begin{equation}
z^2-\tau z+\gam=0\label{zeq}
\end{equation}
has a solution satisfying $|z|=1$ if and only if $\Phi=1$. If
$\Phi<1$ then both solutions satisfy $|z|<1$. If $\Phi>1$ then one
solution satisfies $|z|<1$ and the other satisfies $|z|>1$.
\end{lemma}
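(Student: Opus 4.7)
My plan is to prove all three statements together by first identifying the curve $\{\Phi(\tau,\gam)=1\}$ as exactly the locus where \eqref{zeq} has a root of modulus one, and then using a continuity-plus-connectedness argument to determine the root distribution on each side.

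The first step is to check that $\Phi(\tau,\gam)=1$ is equivalent to the existence of a unimodular root of \eqref{zeq}. If $z=\rme^{\ri\theta}$ is a root, dividing the equation by $z$ gives $\tau=z+\gam/z=\rme^{\ri\theta}+\gam\rme^{-\ri\theta}$, and taking real and imaginary parts yields $\Re(\tau)=(1+\gam)\cos\theta$ and $\Im(\tau)=(1-\gam)\sin\theta$. Since $|\gam|<1$ both denominators in \eqref{Phidef} are non-zero, and substituting gives $\Phi=\cos^2\theta+\sin^2\theta=1$. Conversely, if $\Phi(\tau,\gam)=1$ I would solve these same two linear equations for $(\cos\theta,\sin\theta)$ (which is consistent precisely because $\Phi=1$) and then reverse the computation to exhibit $\rme^{\ri\theta}$ as a root.

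Next, let $N(\tau)$ denote the number of roots of \eqref{zeq} lying in $\{|z|<1\}$, counted with multiplicity. Since the roots depend continuously on the coefficient $\tau$, $N$ can only jump when a root crosses the unit circle, so by the previous step $N$ is locally constant on the open set $\{\Phi\neq 1\}$. Vi\`ete's formula gives $z_1 z_2 = \gam$, so $|z_1||z_2|=|\gam|<1$, which rules out $N=0$; hence $N\in\{1,2\}$ throughout $\{\Phi\neq 1\}$.

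Finally, because $-1<\gam<1$, $\Phi$ is a positive-definite quadratic form in $(\Re\tau,\Im\tau)$, so $\{\Phi<1\}$ is the (convex, hence connected) interior of an ellipse, and it contains $\tau=0$, at which the roots are $\pm\sqrt{-\gam}$ with modulus $\sqrt{|\gam|}<1$; thus $N\equiv 2$ on $\{\Phi<1\}$. The complement $\{\Phi>1\}$ is the exterior of this bounded convex set and so is also connected; for $|\tau|$ large the two roots behave asymptotically like $\tau$ and $\gam/\tau$, giving $N=1$ for $|\tau|$ large, and hence $N\equiv 1$ throughout $\{\Phi>1\}$. I do not expect a serious obstacle in any of these steps; the most delicate point is just to phrase the continuity argument in terms of the integer-valued count $N(\tau)$ rather than tracking individually labelled roots.
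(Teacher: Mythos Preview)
Your proof is correct and follows essentially the same approach as the paper: the identification of $\{\Phi=1\}$ with the locus of unimodular roots via $\tau=\rme^{\ri\theta}+\gam\rme^{-\ri\theta}$, followed by a continuity/connectedness argument evaluated at $\tau=0$ for the interior. Your version is somewhat more explicit than the paper's (you invoke Vi\`ete to exclude $N=0$ and give the large-$|\tau|$ asymptotics for the exterior case, whereas the paper just says ``the case $\Phi>1$ is similar''), but the strategy is identical.
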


\begin{proof}
For $\theta\in\R$,  $z=\rme^{i\theta}$ is a solution of (\ref{zeq}) if and only if
\[ \cos(\theta)=\frac{\Re(\tau)}{1+\gam}, \hspace{2em}
\sin(\theta)=\frac{\Im(\tau)}{1-\gam},
\]
so that \eqref{zeq} has a solution satisfying $|z|=1$ if and only if $\Phi(\tau,\gamma)=1$.

The set $U=\{\tau\in\C:\Phi<1\}$ is connected and contains the
origin. Since the solutions of (\ref{zeq}) depend continuously on
$\tau$, and both solutions satisfy $|z|<1$ if $\tau=0$, it follows
that both satisfy $|z|<1$ for all $\tau\in U$. The case $\Phi>1$ is
similar.
\end{proof}

The following lemma is closely related to a similar result for the non-self-adjoint Anderson model in \cite[Theorem~11]{EBD1}.

\begin{lemma}\label{BIO}
If $c\in \Ome_\sig$ and $\lam\in \C$ then the space of all
solutions of $A_cf=\lam f$ is two-dimensional. If $c$ is periodic
with period $p$ then the asymptotic behaviour as $n\to\pm\infty$ of
the solutions is determined by the solutions $z_1,\, z_2$ of the
polynomial $z^2-\tau(\lam) z+\gam=0$, where
$\tau(\lam)$ is a monic polynomial in $\lam$ with degree $p$, given by $\tau(\lambda) = \tr(T_p)$, where $T_p=X_p X_{p-1}\ldots X_1$ and
$$
X_n = \mtrx{0&1\\ -c_n&\lam},
$$
and $\gam=\det(T_p)=\pm \sig^p$.
Ordering the two solutions so that $|z_1|\geq |z_2|$, there are three cases:
\begin{enumerate}
\item
$\lambda$ lies in the closed set
\[
B_c=\{\lam: |z_1|=1\mbox{ and } |z_2|=\sig^p\}.
\]
This set is the spectrum of $A_c$, equivalently, the set of $\lam$ for
which $A_cf=\lam f$ has a bounded solution.
\item
$\lam$ lies in the open set
\[
I_c=\{\lam: 1> |z_1|\geq|z_2|>\sig^p\}.
\]
This is the case if and only if all solutions of $A_cf=\lam f$ decay exponentially as
$n\to +\infty$.
\item
$\lam$ lies in the open set
\[
O_c=\{\lam: |z_1|>1\mbox{ and } |z_2|<\sig^p\}.
\]
This is the case if and only if there exists a solution of $A_cf=\lam f$ that decays
exponentially as $n\to +\infty$ and grows exponentially as $n\to
-\infty$, and another solution that decays exponentially as $n\to
-\infty$ and grows exponentially as $n\to +\infty$.
\end{enumerate}
\end{lemma}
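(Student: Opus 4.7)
The plan is to start from the scalar recurrence $f_{n+1}=\lam f_n-c_nf_{n-1}$, $n\in\Z$, to which $A_cf=\lam f$ is equivalent. Because $c_n=\pm\sig\neq 0$, one can solve in both directions, so every solution is uniquely determined by $(f_0,f_1)\in\C^2$; this gives the two-dimensional solution space.

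For the periodic case I would vectorise by setting $v_n=\vctr{f_{n-1}\\f_n}$, so that $v_{n+1}=X_nv_n$ and $v_{n+p}=T_pv_n$. A direct calculation gives $\det X_n=c_n$, hence $\gam=\det T_p=\prod_{n=1}^pc_n=\pm\sig^p$. To show $\tau(\lam)=\tr T_p$ is monic of degree $p$, I would induct, tracking that with initial data $(f_0,f_1)=(0,1)$ the entry $f_n$ is a monic polynomial in $\lam$ of degree $n-1$, while with $(1,0)$ it has degree $n-2$ for $n\geq 2$; summing the two diagonal entries of $T_p$ then yields $\tau(\lam)=\lam^p+O(\lam^{p-1})$. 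The numbers $z_1,z_2$ are the roots of the characteristic polynomial $z^2-\tau(\lam)z+\gam=0$ of $T_p$, and diagonalising $T_p$ (or putting it in Jordan form when $z_1=z_2$) shows every solution is a linear combination of two fundamental solutions whose growth rates per period are $z_1$ and $z_2$.

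The three cases then reduce to elementary algebra combined with $|z_1z_2|=|\gam|=\sig^p<1$. In case (2), $|z_1|<1$ together with $|z_1|\geq|z_2|$ gives $|z_2|<1$, so all solutions decay as $n\to+\infty$. In case (3), $|z_1|>1$ with $|z_1||z_2|=\sig^p$ forces $|z_2|<\sig^p$, and thus the $z_2$-solution decays at $+\infty$ and grows at $-\infty$ while the $z_1$-solution reverses these roles.

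The main step I expect to require real work is identifying $\Spec(A_c)$ with $B_c$. Since translation by $p$ commutes with $A_c$, a discrete Fourier transform in this translation yields a unitary equivalence between $A_c$ and the direct integral $\int_{[0,2\pi)}^\oplus A_c^\theta\,d\theta/(2\pi)$, where $A_c^\theta$ is the $p\times p$ matrix acting on one period under the quasi-periodic boundary condition $f_{n+p}=\rme^{i\theta}f_n$. A vector in $\C^p$ is an eigenvector of $A_c^\theta$ with eigenvalue $\lam$ exactly when it extends to a quasi-periodic solution of the recurrence, which occurs iff $\rme^{i\theta}$ is an eigenvalue of $T_p$. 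Hence $\lam\in\Spec(A_c)$ iff $T_p$ has a unimodular eigenvalue. Given $|z_1|\geq|z_2|$ and $|z_1||z_2|=\sig^p<1$, such an eigenvalue must be $z_1$ with $|z_1|=1$, forcing $|z_2|=\sig^p$; this set is exactly $B_c$. The final identification of $B_c$ with the set of $\lam$ for which $A_cf=\lam f$ has a bounded solution is then immediate, since bounded solutions correspond precisely to unimodular eigenvalues of $T_p$.
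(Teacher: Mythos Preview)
Your argument is correct and follows the same transfer-matrix route as the paper: vectorise to $v_{n+1}=X_nv_n$, form $T_p$, and read off the trichotomy from the eigenvalues $z_1,z_2$ subject to $|z_1z_2|=\sig^p$. The one substantive difference is in how $\Spec(A_c)=B_c$ is justified. You spell out the Bloch/direct-integral decomposition, reducing $\lam\in\Spec(A_c)$ to the existence of a unimodular eigenvalue of $T_p$; the paper instead invokes the preceding Lemma~\ref{quadraticbounds}, which analyses when $z^2-\tau z+\gam=0$ (with real $\gam\in(-1,1)$) has a root on the unit circle in terms of the quantity $\Phi(\tau,\gam)$. The paper's route yields, as a by-product, the explicit curve description $B_c=\{\lam:\Phi(\tau(\lam),\gam)=1\}$, which is then used directly in Lemmas~\ref{c1} and~\ref{c2} to identify the ellipses $E_{\pm\sig}$; your direct-integral argument is cleaner as a stand-alone identification of the spectrum but does not by itself produce that formula.
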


\begin{proof}
The sequence $f:\Z\to\C$ is a solution of $A_cf=\lam f$ if and only
if $f_{n+1}+c_n f_{n-1}=\lam f_n$ for all $n\in\Z$. This recurrence
relation can be rewritten in the form
\begin{eqnarray*}
\vctr{ f_n\\ f_{n+1}}&=&\mtrx{0&1\\ -c_n&\lam}\vctr{ f_{n-1}\\ f_n}\\
&=& X_n\vctr{ f_{n-1}\\ f_n}\\
&=& T_n\vctr{ f_{0}\\ f_1}
\end{eqnarray*}
where $T_n=X_n X_{n-1}\ldots X_1$. If $c$ is periodic with period
$p$, then the asymptotic behaviour of the two-dimensional space of
eigenfunctions $f$ is determined by the magnitude of the eigenvalues
$z_1,\, z_2$ of $T_p$. These are the solutions of the equation
$z^2-\tau z+\gam=0$ where $\tau=\tr(T_p)$ and $\gam=\det(T_p)$.
A simple induction establishes that the $(i,j)$-th entry of $T_p$ is a polynomial in $\lam$ with degree less than $p$ unless $i=j=2$ in which case it is a monic polynomial with degree $p$. Therefore $\tau$ is a monic polynomial in $\lam$ with degree $p$. However
\[
\det(T_p)=\prod_{r=1}^p \det(X_r)= c_1 \dots c_p = \pm \sig^p
\]
does not depend on $\lam$. The continuous dependence of the roots of a polynomial on its
coefficients implies that $B_c$ is closed while $I_c$ and $O_c$ are
open. An application of Lemma~\ref{quadraticbounds} now completes
the proof. One sees, in particular, that
\[
\Spec(A_c)= B_c=\{\lam: \Phi(\tau,\gam)=1 \}.
\]
\end{proof}

Our next lemma enables us to determine the sets $I_c$ and $O_c$ for certain important periodic sequences $c$, and to determine the spectra of certain paired periodic operators. We continue with the assumptions and notation of Lemma~\ref{BIO}.

\begin{lemma}\label{findIcOc}
If $V$ is a connected component of $\C\backslash B_c$ then $V\subseteq I_c$ or $V\subseteq O_c$. If $V$ is unbounded then $V\subseteq O_c$, and if $0\in V$ then $V\subseteq I_c$. If $\C\backslash B_c$ has exactly two components then the bounded component equals $I_c$ and the unbounded component equals $O_c$.
\end{lemma}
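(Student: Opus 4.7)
The plan is to exploit the observation that, since $z_1z_2=\gam$ with $|\gam|=\sig^p$ constant, the three sets $B_c$, $I_c$, $O_c$ can be more simply characterised as $\{|z_1|=1\}$, $\{|z_1|<1\}$, and $\{|z_1|>1\}$, respectively. The roots $z_1(\lam),z_2(\lam)$ of the monic quadratic $z^2-\tau(\lam)z+\gam$ depend continuously on $\lam$, so the maximum of their moduli, $|z_1(\lam)|$, is a continuous function of $\lam$ on $\C$. On any connected component $V$ of $\C\setminus B_c=\{\lam:|z_1(\lam)|\neq 1\}$, continuity and connectedness force either $|z_1|<1$ throughout $V$ (so $V\subseteq I_c$) or $|z_1|>1$ throughout $V$ (so $V\subseteq O_c$), giving the first assertion.

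For the unbounded case I would use that $\tau(\lam)$ is monic of degree $p\geq 1$, so $|\tau(\lam)|\to\infty$ as $|\lam|\to\infty$; the quadratic formula then yields $|z_1(\lam)|\sim |\tau(\lam)|\to\infty$, so $|z_1|>1$ for all sufficiently large $|\lam|\in V$, and the dichotomy above upgrades this to $V\subseteq O_c$ on the whole unbounded component.

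The subtle point is the case $0\in V$, since $|z_1 z_2|=\sig^p<1$ alone does not rule out $|z_1(0)|>1$ (it would only force $|z_2(0)|<1$). The plan here is to compute $T_p$ explicitly at $\lam=0$: the recurrence reduces to $f_{n+1}=-c_n f_{n-1}$, which decouples the even- and odd-indexed entries and satisfies $|f_{n+1}|=\sig|f_{n-1}|$. A short induction then shows that $T_p(0)$ is diagonal when $p$ is even, with both diagonal entries of modulus $\sig^{p/2}$, and anti-diagonal when $p$ is odd, with the two antidiagonal entries having a product of modulus $\sig^p$. In either case both eigenvalues of $T_p(0)$ have modulus $\sig^{p/2}<1$, so $|z_1(0)|=\sig^{p/2}<1$, meaning $0\in I_c$; hence any component $V$ containing $0$ lies in $I_c$. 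I expect this explicit calculation of $T_p(0)$ to be the only real obstacle; everything else is continuity and connectedness.

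For the final assertion, if $\C\setminus B_c$ has exactly two components then one is unbounded and hence contained in $O_c$ by the previous step. The other component must be bounded; since $0\notin B_c$ (as $|z_1(0)|=\sig^{p/2}\neq 1$), the point $0$ lies in one of the two components, and it cannot lie in the unbounded one since that is contained in $O_c$ and $0\in I_c$. Therefore $0$ lies in the bounded component, forcing that component to be contained in $I_c$. As $\C\setminus B_c=I_c\cup O_c$ is the disjoint union of these two components, the bounded component equals $I_c$ and the unbounded component equals $O_c$.
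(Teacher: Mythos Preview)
Your proof is correct and follows essentially the same route as the paper: a connectedness argument for the first assertion, the asymptotics of the monic polynomial $\tau(\lam)$ for the unbounded component, an explicit computation of $T_p$ at $\lam=0$ (diagonal for even $p$, antidiagonal for odd $p$, with eigenvalues of modulus $\sig^{p/2}$ in either case) to place $0$ in $I_c$, and then the obvious combination for the final sentence. Your observation that the constraint $|z_1z_2|=\sig^p$ reduces the trichotomy to the single condition $|z_1|\lessgtr 1$, so that continuity of $\lam\mapsto\max(|z_1(\lam)|,|z_2(\lam)|)$ alone drives the connectedness step, is a mild streamlining of the paper's version, which instead quotes that $I_c$ and $O_c$ are already known to be open and disjoint; but the underlying idea is the same.
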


\begin{proof}
We first observe that $V$, $I_c$ and $O_c$ are all open sets and that their definitions imply directly that $I_c,\, O_c$ are disjoint. Therefore $V=(V\cap I_c)\cup (V\cap O_c)$, where the two intersections on the right-hand side are disjoint. Since $V$ is connected, it follows that
$V=V\cap I_c$ or $V=V\cap O_c$ This completes the proof of the first statement.

Lemma~\ref{BIO}~case~1 implies that
\[
B_\sig=\Spec(A_c)\subseteq \{\lam: |\lam|\leq 1+\sig\}.
\]
Therefore $\C\backslash B_c$ has only one unbounded component $V$ and it contains $\{ \lam : |\lam |>1+\sig\}$. To prove that $V\subseteq O_c$ it is sufficient by the first part of this proof to find a single point $\lam\in V\cap O_c$. The fact that $\tau$ is a polynomial with degree $p$ implies that $|\tau(\lam)|\to\infty$ as $|\lam|\to\infty$. This implies that the solutions of $z^2-\tau(\lam)z+\gam=0$, where $\gam=\pm \sig^p$, are $z\sim \tau(\lam)$ and $z\sim \gam/\tau(\lam)$ to leading order for all large enough $|\lam|$. Therefore $\lam\in O_c$ for all such $\lam$.

The proof is completed by proving that $0\in I_c$. For $\lam=0$ one has $T_p=X_pX_{p-1}\ldots X_1$ where each $X_r$ is of the form $\txtmtrx{0&1\\ \pm \sig&0}$. If $p=2m$ it follows that $T_p=\txtmtrx{\pm \sig^m&0\\0&\pm \sig^m}$. The fundamental equation must therefore take one of the forms $z^2-2\sig^m z+\sig^{2m}=0$, $z^2+2\sig^m z+\sig^{2m}=0$ or $z^2-\sig^{2m}=0$. In each case both solutions have modulus $\sig^{p/2}<1$. The same holds if $p=2m+1$.

The final statement of the lemma follows from the following observations. There must be a component of $\C\setminus B_c$ that contains $0$ and there must be an unbounded component. The first part of the  proof shows that these are distinct, and the extra hypothesis is that there are no other components.
\end{proof}

Our next task is to determine the sets $B_c,\, I_c$ and $O_c$ for
certain particular periodic sequences.

\begin{lemma}\label{c1}
If $c_n=\sig$ for all $n\in\Z$ then $\Spec(A_c)$ is the ellipse
\begin{eqnarray}
\Spec(A_c)&=&\left\{ u+iv: \frac{u^2}{(1+\sig)^2}+\frac{v^2}{(1-\sig)^2}=1\right\}\label{c1Eucl}\\
&=&\left\{ \rho\rme^{i\theta}:
\rho=\frac{1-\sig^2}{\sqrt{1+\sig^2-2\sig
\cos(2\theta)}}\right\}.\label{c1polar}
\end{eqnarray}
Moreover the interior $U$ of the ellipse equals $I_c$ and the
exterior $V$ of the ellipse equals $O_c$.
\end{lemma}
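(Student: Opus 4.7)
The plan is to apply Lemma \ref{BIO} and Lemma \ref{findIcOc} directly, since $c$ is periodic with the smallest possible period $p=1$, and all calculations reduce to explicit manipulations with a $2\times 2$ matrix.

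First I would compute the transfer matrix. With $p=1$, we have $T_1 = X_1 = \txtmtrx{0 & 1 \\ -\sig & \lam}$, so $\tau(\lam) = \tr(T_1) = \lam$ and $\gam = \det(T_1) = \sig$. Since $0 < \sig < 1$, Lemma \ref{BIO} applies with $\Phi(\tau,\gam)$ from \eqref{Phidef}. The spectrum $B_c$ is the set where $\Phi(\lam,\sig) = 1$, i.e.
\[
\frac{\Re(\lam)^2}{(1+\sig)^2} + \frac{\Im(\lam)^2}{(1-\sig)^2} = 1,
\]
which is exactly \eqref{c1Eucl}.

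Next I would convert to polar form by writing $\lam = \rho\,\rme^{\ri\theta}$, substituting into the ellipse equation, clearing the common denominator $(1-\sig^2)^2$, and using the identity
\[
\cos^2\theta\,(1-\sig)^2 + \sin^2\theta\,(1+\sig)^2 = (1+\sig^2) - 2\sig\cos(2\theta),
\]
which rearranges to \eqref{c1polar}. This is a routine trigonometric computation.

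Finally, I would identify $I_c$ and $O_c$ by appealing to Lemma \ref{findIcOc}. The ellipse $B_c$ is a simple closed curve in $\C$, so $\C \setminus B_c$ has exactly two connected components: the bounded interior $U$ containing $0$, and the unbounded exterior $V$. Lemma \ref{findIcOc} then immediately gives $U = I_c$ (since $0 \in U$) and $V = O_c$ (since $V$ is unbounded), completing the proof. The only non-routine step is the trigonometric identity leading to the polar form, but it presents no real obstacle.
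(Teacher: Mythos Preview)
Your proof is correct and follows essentially the same route as the paper: compute $\tau$ and $\gam$ from the $p=1$ transfer matrix, identify $B_c$ via $\Phi(\lam,\sig)=1$, and then invoke Lemma~\ref{findIcOc} for $I_c$ and $O_c$. You simply spell out in more detail the polar-form conversion and the two-component argument that the paper leaves implicit.
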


\begin{proof}
We have $p=1$ and $T_1=\txtmtrx{0&1\\ -\sig&\lam}$, so
$\tau(\lam)=\lam$ and $\gam=\sig$. Using (\ref{Phidef}) we deduce that
$\Spec(A_c)$ is given by (\ref{c1Eucl}). The proof is completed by using Lemma~\ref{findIcOc}.
\end{proof}

\begin{lemma}\label{c2}
If $c_n=-\sig$ for all $n\in\Z$ then $\Spec(A_c)$ is the ellipse
\begin{eqnarray}
\Spec(A_c)&=&\left\{ u+iv: \frac{u^2}{(1-\sig)^2}+\frac{v^2}{(1+\sig)^2}=1\right\}\label{c2Eucl}\\
&=&\left\{ \rho\rme^{i\theta}:
\rho=\frac{1-\sig^2}{\sqrt{1+\sig^2+2\sig
\cos(2\theta)}}\right\}.\label{c2polar}
\end{eqnarray}
Moreover $I_c$ is the interior of the ellipse and $O_c$ is the
exterior of the ellipse.
\end{lemma}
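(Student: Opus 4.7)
The plan is to mirror the proof of Lemma \ref{c1} almost verbatim, with the only substantive change being that the sign of $\gam=\det(T_p)$ flips. First I would compute the ingredients of Lemma \ref{BIO} in the constant case $c_n=-\sig$: the period is $p=1$, the transfer matrix is $T_1=\txtmtrx{0 & 1\\ \sig & \lam}$, so that $\tau(\lam)=\tr(T_1)=\lam$ and $\gam=\det(T_1)=-\sig$. Substituting $\gam=-\sig$ into the quantity $\Phi(\tau,\gam)$ of Lemma~\ref{quadraticbounds} gives
\[
\Phi(\lam,-\sig)=\frac{\Re(\lam)^2}{(1-\sig)^2}+\frac{\Im(\lam)^2}{(1+\sig)^2},
\]
and setting this equal to $1$ with $\lam=u+iv$ yields exactly the ellipse equation (\ref{c2Eucl}). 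Note that relative to the proof of Lemma~\ref{c1} the denominators $(1+\sig)^2$ and $(1-\sig)^2$ have swapped roles, which is the reason the major and minor axes of the ellipse are interchanged.

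Next I would convert to polar form by substituting $u=\rho\cos\theta$, $v=\rho\sin\theta$ into (\ref{c2Eucl}) and clearing denominators, obtaining
\[
\rho^2\bigl[(1+\sig)^2\cos^2\theta+(1-\sig)^2\sin^2\theta\bigr]=(1-\sig^2)^2.
\]
Expanding the bracket and using $\cos^2\theta-\sin^2\theta=\cos(2\theta)$ simplifies it to $1+\sig^2+2\sig\cos(2\theta)$, giving (\ref{c2polar}). This is a routine trigonometric manipulation with no subtlety.

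Finally, for the identification of $I_c$ and $O_c$, I would invoke Lemma~\ref{findIcOc}. The curve $B_c$ is a genuine ellipse (nondegenerate since $0<\sig<1$), so its complement in $\C$ has exactly two connected components: a bounded interior $U$ containing the origin, and an unbounded exterior $V$. Lemma~\ref{findIcOc} then immediately gives $I_c=U$ and $O_c=V$.

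I do not anticipate any real obstacle: the whole argument is structurally identical to Lemma~\ref{c1}, and the only place one must be careful is tracking the sign of $\gam$ when plugging into $\Phi(\tau,\gam)$, since this is exactly what swaps the axes of the ellipse.
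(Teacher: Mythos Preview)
Your proposal is correct and follows precisely the approach the paper takes: compute $p=1$, $T_1=\txtmtrx{0&1\\ \sig&\lam}$, $\tau=\lam$, $\gam=-\sig$, apply Lemma~\ref{quadraticbounds} to get (\ref{c2Eucl}), then invoke Lemma~\ref{findIcOc}. The paper in fact states only the first sentence of this and then writes ``We omit the rest of the proof, which is almost identical to that of Lemma~\ref{c1}'', so your write-up is more detailed than the paper's but structurally identical.
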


\begin{proof}
We have $p=1$ and $T_1=\txtmtrx{0&1\\ \sig&\lam}$, so
$\tau(\lam)=\lam$ and $\gam=-\sig$. We omit the rest of proof, which is almost identical to that of Lemma~\ref{c1}.
\end{proof}

In the following lemma, starting from Lemmas \ref{c1} and \ref{c2}, and making successive applications of Lemma \ref{lemmaB}, we compute the spectra of a family of periodic sequences, namely the sequences $c^\pm = \sig c^{(n,\pm)}\in \Ome_\sig$, defined by \eqref{periodic_sequences}. By Lemma \ref{lem_periodic}, these sequences are periodic of period $\leq 4^n$.

This next lemma applies for $0<\sig<1$. The corresponding result for $\sig=1$ is equations \eqref{spec_cplus} and \eqref{spec_cminus} above.

\begin{lemma} \label{lem_sf} Suppose $n\in \Z_+$ and $c^+ = \sig c^{(n,+)}$, $c^- = \sig c^{(n,-)}$. Then
\begin{equation} \label{spectral_formula}
\Spec( A_{c^\pm}) = \{\,\rho\rme^{\ri\theta}: \rho = \rho_n^\pm(\theta,\sigma)\}
\end{equation}
where
$$
\rho_0^+(\theta,\sigma) = \frac{1-\sigma^2}{\left(1+\sigma^2 - 2\sigma\cos 2\theta\right)^{1/2}}, \quad \rho_0^-(\theta,\sigma) = \frac{1-\sigma^2}{\left(1+\sigma^2 + 2\sigma\cos 2\theta\right)^{1/2}},
$$
and, for $n\in \N$,
$$
\rho_n^\pm(\theta,\sigma) =\left(\rho_0^\pm(2^n\theta,\sigma^{2^n})\right)^{1/2^n} = \frac{\left(1-\sigma^{2^{n+1}}\right)^{1/2^n}}{\left(1+\sigma^{2^{n+1}} \mp 2\sigma^{2^n}\cos \left(2^{n+1}\theta\right)\right)^{1/2^{n+1}}}.
$$
Moreover,
$$
I_{c^\pm} = \left\{\,\rho \rme^{i\theta}: 0\leq \rho < \rho_n^\pm(\theta,\sigma) \right\}
$$
and
$$
O_{c^\pm} = \left\{\,\rho \rme^{i\theta}: \rho > \rho_n^\pm(\theta,\sigma) \right\}.
$$
\end{lemma}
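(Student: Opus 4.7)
The plan is to prove all three statements in the lemma (the formula for $\Spec(A_{c^\pm})$ and the formulas for $I_{c^\pm}$ and $O_{c^\pm}$) by induction on $n$, reducing the inductive step via the mapping $\Gam_{\sig,+}$ to the base case handled by Lemmas \ref{c1} and \ref{c2}. For the base case $n=0$, $c^+=\sig$ and $c^-=-\sig$ are the constant sequences, and Lemmas \ref{c1} and \ref{c2} give directly both the polar ellipse equations $\rho=\rho_0^\pm(\theta,\sig)$ and the identification of the interior and exterior of each ellipse as $I_{c^\pm}$ and $O_{c^\pm}$; this matches the claim exactly.

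For the inductive step on the spectrum, the key observation, using \eqref{def_Gam_sig_plus} and \eqref{periodic_sequences}, is the identity
$$
\sig c^{(n,\pm)} = \sig\,\Gam_+(c^{(n-1,\pm)}) = \Gam_{\sig,+}(\sig^2 c^{(n-1,\pm)}).
$$
Applying Lemma \ref{lemmaB} yields $\{\lam^2:\lam\in\Spec(A_{\sig c^{(n,\pm)}})\} = \Spec(A_{\sig^2 c^{(n-1,\pm)}})$; iterating this $n$ times, with $\sig$ replaced by $\sig^{2^k}$ at the $k$-th stage, telescopes to
$$
\{\lam^{2^n}:\lam\in\Spec(A_{\sig c^{(n,\pm)}})\} = \Spec(A_{\pm\sig^{2^n}}).
$$
The right-hand side is known explicitly from Lemmas \ref{c1} and \ref{c2} applied with $\sig$ replaced by $\sig^{2^n}$ as $\{\rho\rme^{\ri\phi}:\rho=\rho_0^\pm(\phi,\sig^{2^n})\}$. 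Writing $\lam=\rho\rme^{\ri\theta}$ so that $\lam^{2^n}=\rho^{2^n}\rme^{\ri 2^n\theta}$, and using the $\pi$-periodicity of $\rho_0^\pm$ in its first argument, this membership condition becomes $\rho^{2^n}=\rho_0^\pm(2^n\theta,\sig^{2^n})$, equivalently $\rho=\rho_n^\pm(\theta,\sig)$, which is exactly \eqref{spectral_formula}.

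The identification of $I_{c^\pm}$ and $O_{c^\pm}$ then follows from Lemma \ref{findIcOc}, once I show that $\C\setminus\Spec(A_{c^\pm})$ has exactly two connected components. A short computation from the explicit formula gives $0<(1-\sig^{2^n})^{1/2^n}\leq \rho_n^\pm(\theta,\sig)\leq (1+\sig^{2^n})^{1/2^n}$, so the spectrum lies in a compact annulus avoiding $0$, and the complement decomposes as the disjoint union of $U:=\{\rho\rme^{\ri\theta}:0\leq\rho<\rho_n^\pm(\theta,\sig)\}$ and $V:=\{\rho\rme^{\ri\theta}:\rho>\rho_n^\pm(\theta,\sig)\}$. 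The set $U$ is bounded and connected as it is star-shaped with respect to $0$; $V$ is unbounded and path-connected, since every point of $V$ can be joined by a radial ray to a circle of radius exceeding $\max_\theta \rho_n^\pm(\theta,\sig)$, which itself lies in $V$. Lemma \ref{findIcOc} then identifies $U=I_{c^\pm}$ and $V=O_{c^\pm}$. The only real obstacle is the bookkeeping of how $\sig$ is squared at each stage of the $\Gam_{\sig,+}$ iteration; beyond that, everything is routine manipulation of the ellipse formulas and an elementary topological argument.
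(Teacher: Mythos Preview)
Your proof is correct and follows essentially the same route as the paper: the base case from Lemmas~\ref{c1} and~\ref{c2}, the inductive/iterative step via the identity $\sig c^{(n,\pm)}=\Gam_{\sig,+}(\sig^2 c^{(n-1,\pm)})$ together with Lemma~\ref{lemmaB}, and then Lemma~\ref{findIcOc} for $I_{c^\pm}$ and $O_{c^\pm}$. The only cosmetic difference is that you telescope the $n$ applications of Lemma~\ref{lemmaB} down to the constant-sequence case in one line rather than performing a single inductive step, and you spell out the two-component topological check that the paper leaves implicit.
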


\begin{proof} Our proof of \eqref{spectral_formula} is by induction. We note first that \eqref{spectral_formula} holds for $n=0$ by Lemmas \ref{c1} and \ref{c2}. Suppose now that \eqref{spectral_formula} holds for some $n\geq 0$ and all $0<\sig<1$. Then
$$
\Spec(A_{\sig^2 c^{(n,\pm)}}) = \{\rho\rme^{\ri\theta}: \rho=\rho_n^\pm(\theta,\sigma^2)\} = \left\{\rho\rme^{\ri\theta}:\rho=\left( \rho_0^\pm(2^n\theta,\sigma^{2^{n+1}})\right)^{1/2^n}\right\}.
$$
Further, since $\sig c^{(n+1,\pm)} = \sig \Gamma_+(c^{(n,+)})=\Gamma_{\sig,+}(\sigma^2 c^{(n,+)})$, it follows from Lemma \ref{lemmaB} that
$$
\Spec(A_{\sig c^{(n+1,\pm)}}) = \left\{\pm \sqrt{\lambda}: \lambda\in \Spec(A_{\sig^2 c^{(n,\pm)}})\right\}.
$$
Combining these equations, we see that \eqref{spectral_formula} holds with $n$ replaced by $n+1$. Thus \eqref{spectral_formula} follows by induction.

The formulae for $I_{c^\pm}$ and $O_{c^\pm}$ follow from \eqref{spectral_formula} and Lemma \ref{findIcOc}.
\end{proof}

We remark that
$\rho_n^-(\theta,\sigma) =\rho_n^+(\theta\pm\pi/2^{n+1},\sigma)$, so that the spectra of $A_{c^\pm}$ in the above lemma are related by
$$
\Spec(A_{c^{+}}) = \rme^{\pm i\pi/2^{n+1}}\, \Spec(A_{c^-}).
$$
This is a symmetry which is surprising from an inspection of the sequences $c^\pm$, which need not even have the same period. (For example, as observed in Section \ref{sec:maps},  $c^+$ has period 4 and $c^-$ period 2 in the case $n=1$.)

In principle, since $c^\pm$ is periodic, \eqref{spectral_formula} should be computable alternatively from the characterisation of the spectrum for general periodic sequences in Lemma \ref{BIO}. As an example of this, for the sequence $c^-=\sig c^{(1,-)}$ which has period $2$, with $c^-_n = (-1)^n\sig$,  the transfer matrix $T_2$ is
given by
\[
T_2=X_2X_1=\mtrx{0&1\\ -\sig&\lam}\mtrx{0&1\\
\sig&\lam}=\mtrx{\sig&\lam\\ \sig\lam&-\sig+\lam^2}.
\]
Applying Lemmas~\ref{quadraticbounds} and \ref{BIO} with
$\tau=\lam^2$ and $\gam=-\sig^2$, we find that $\Spec(A_c)$ is the
set of all $\lam=u+iv$ for which
\[
\frac{(u^2-v^2)^2}{(1-\sig^2)^2}+ \frac{(2uv)^2}{(1+\sig^2)^2}=1.
\]
If one puts $\lam=\rho \rme^{i\theta}$, then this may be rewritten in the form \eqref{spectral_formula}.

The main point of the above theory and calculations are to prove and prepare the use of the following result on operators $A_c$ that are paired periodic operators. To state this result let us introduce the notations
\begin{equation}
E_\sig=\left\{ x+iy:\frac{x^2}{(1+\sig)^2}+
\frac{y^2}{(1-\sig)^2}< 1\right\}\label{ell1}
\end{equation}
and
\begin{equation}
E_{-\sig}=\left\{ x+iy:\frac{x^2}{(1-\sig)^2}+
\frac{y^2}{(1+\sig)^2}<1\right\}\label{ell2},
\end{equation}
so that $E_\sig$ and $E_{-\sig}$ are the interiors of the ellipses introduced in Lemmas~\ref{c1} and
\ref{c2}.

The following lemma is analogous to \cite[Theorem~12]{EBD1}, proved there for the non-self-adjoint Anderson model.

\begin{lemma} \label{lem_two_periods} Suppose that $c\in \Ome_\sig$ is periodic and $\tau\in \{\sig,-\sig\}$, and define $c^*\in \Ome_\sig$ by $c^*_n = c_n$, for $n\geq 0$, and $c^*_n=\tau$ for $n< 0$. Then
$$
\Spec(A_{c^*}) \supset \overline{I_c}\setminus E_\tau.
$$
\end{lemma}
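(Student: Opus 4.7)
The plan is to prove the inclusion in two stages: first, construct for each $\lam\in I_c\cap O_{c_\tau}$ an exact $\ell^2(\Z)$ eigenfunction of $A_{c^*}$ with eigenvalue $\lam$; second, exploit the closedness of $\Spec(A_{c^*})$ together with a topological density argument to extend to all of $\overline{I_c}\setminus E_\tau$. Throughout I will let $c_\tau\in\Ome_\sig$ denote the constant sequence with $(c_\tau)_n=\tau$, which is periodic with period one, so that $\Spec(A_{c_\tau})$ is the ellipse computed in Lemma~\ref{c1} or \ref{c2}, and $O_{c_\tau}=\C\setminus\overline{E_\tau}$ by Lemma~\ref{findIcOc}.

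For the construction, fix $\lam\in I_c\cap O_{c_\tau}$. Applying Lemma~\ref{BIO} case~3 to $c_\tau$, one obtains a non-trivial solution $f^-:\Z\to\C$ of $f^-_{n+1}+\tau f^-_{n-1}=\lam f^-_n$ that decays exponentially as $n\to-\infty$. I then set $f_n=f^-_n$ for $n\leq 0$ and extend to $n\geq 1$ via $f_{n+1}=\lam f_n-c_n f_{n-1}$. The row-$0$ equation of $A_{c^*}$ is $f_1+c_0 f_{-1}=\lam f_0$ (using $c^*_0=c_0$), which is automatically satisfied, so $f$ solves $A_{c^*}f=\lam f$ on all of $\Z$. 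Because $\lam\in I_c$, Lemma~\ref{BIO} case~2 forces every solution of the $c$-recurrence to decay exponentially as $n\to+\infty$, hence $f\in\ell^2(\Z)$; moreover $f\neq 0$ because $f^-$ is non-trivial, so $(f_{-1},f_0)\neq(0,0)$. Thus $\lam$ is an eigenvalue of $A_{c^*}$, and $I_c\cap O_{c_\tau}\subset\Spec(A_{c^*})$.

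For the extension, closedness of the spectrum gives $\overline{I_c\cap O_{c_\tau}}\subset\Spec(A_{c^*})$, and it remains to verify $\overline{I_c}\setminus E_\tau\subset\overline{I_c\cap O_{c_\tau}}$. If $\lam\in O_{c_\tau}$ (open) and $\lam\in\overline{I_c}$, a small disc about $\lam$ lies in $O_{c_\tau}$ and meets $I_c$, giving the required approximation. If $\lam\in\partial E_\tau$ but $\lam\in I_c$, a perturbation outward along the normal to the smooth Jordan curve $\partial E_\tau$ enters $O_{c_\tau}$ while staying in the open set $I_c$.

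The main obstacle is the degenerate sub-case $\lam\in\partial I_c\cap\partial E_\tau$, where both an outward normal displacement from $\partial E_\tau$ and a displacement into $I_c$ must be reconciled. This requires a short analysis of the local geometry of the algebraic curve $\Spec(A_c)=\partial I_c$ at its intersections with the ellipse $\partial E_\tau$: generically these intersections are finite and transversal, producing a local wedge of $I_c\cap O_{c_\tau}$ that accumulates at $\lam$, while tangential intersections can be ruled out or handled by a direct perturbative argument. Apart from this geometric verification, the matching-of-decaying-solutions construction in the first stage follows the same pattern as the proof of \cite[Theorem~12]{EBD1} for the non-self-adjoint Anderson model, to which the paper explicitly refers.
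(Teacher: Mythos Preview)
Your eigenfunction construction in the first stage is exactly the argument the paper gives: for $\lam\in I_c\setminus\overline{E_\tau}$ one matches a solution decaying at $-\infty$ (coming from the constant-$\tau$ side, Lemma~\ref{BIO} case~3) with the fact that every solution of the $c$-recurrence decays at $+\infty$ (Lemma~\ref{BIO} case~2). The paper's proof is only three sentences and makes the identical reduction, simply asserting that ``since $\Spec(A_{c^*})$ is closed it is enough to show $\Spec(A_{c^*})\supset I_c\setminus\overline{E_\tau}$''.

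You are right to notice that the inclusion $\overline{I_c}\setminus E_\tau\subset\overline{I_c\setminus\overline{E_\tau}}$ is not a purely topological triviality: the paper is being a little informal here (for instance, if $c$ is the constant sequence $\tau$ then $I_c=E_\tau$, the right-hand side is empty, and the left is the ellipse $\partial E_\tau$). But your proposed cure---a transversality analysis of the intersection of the algebraic curve $\partial I_c$ with $\partial E_\tau$---is the wrong tool and would be hard to make rigorous for arbitrary periodic $c$.

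The clean fix uses a fact the paper states in the preamble to Section~\ref{periodic_ops}: the essential spectrum of the paired periodic operator $A_{c^*}$ is the union of the spectra of its two periodic pieces, i.e.\ $\Spec(A_c)\cup\Spec(A_{c_\tau})=\partial I_c\cup\partial E_\tau$. Hence $\partial I_c\cup\partial E_\tau\subset\Spec(A_{c^*})$ automatically, and every $\lam\in\overline{I_c}\setminus E_\tau$ lies either in $I_c\setminus\overline{E_\tau}$ (covered by your eigenfunction construction), in $\partial I_c$, or in $\partial E_\tau$. No geometry of curve intersections is needed, and this also repairs the paper's own slightly elliptic closure step.
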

\begin{proof}
Since $\Spec(A_{c^*})$ is closed it is enough to show that $\Spec(A_{c*}) \supset I_c\setminus \overline{E_\tau}$. So suppose that $\lambda \in I_c\setminus \overline{E_\tau}$. Then, by Lemmas \ref{c1} and \ref{c2} and Lemma \ref{BIO}, since $\lam\not\in \overline{E_\tau}$, it follows
that there exists a non-trivial solution $f$ of $A_{c^*}f=\lam f$ such that $f_n\to
0$ exponentially as $n\to -\infty$. Since $\lam\in I_c$, again applying Lemma \ref{BIO}, it follows that this
solution $f$ also decays exponentially as $n\to +\infty$. Thus $\lam$ is an eigenvalue of $A_{c^*}$ so $\lam\in \Spec(A_{c^*})$.
\end{proof}

\section{First proof of the main theorem}\label{firstproof}

This section is devoted to the proof of Theorem~\ref{maintheorem3},
in which we assume that $0<\sig<1$.

\begin{lemma}\label{lemmaA} We have
\begin{equation}
\Spec(A_c)\subseteq \bkt{1-\sig, 1+\sig} \label{ring}
\end{equation}
and
\begin{equation}
\Spec(A_c)\subseteq \{ x+iy:|x|+|y|\leq
\sqrt{2(1+\sig^2)}\},\label{numrange}
\end{equation}
for every choice of $c\in\Ome_\sig$.
\end{lemma}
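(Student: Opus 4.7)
The plan is to prove each of the two inclusions separately, using different tools: a perturbation (Neumann series) argument for the annulus, and a numerical range calculation for the diamond-shaped bound.

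For the annulus bound, the starting point is the decomposition $A_c = L + V_c R$ used already in the proof of Lemma~\ref{square}, where $L$ and $R=L^{-1}=L^*$ are the left and right shifts on $\ell^2(\Z)$ and $V_c$ is multiplication by the sequence $c$, so $\|V_c R\|=\sig$ and $\|L\|=1$. The upper bound $|\lam|\leq 1+\sig$ is immediate from $\|A_c\|\leq 1+\sig$. For the lower bound, given $\lam$ with $|\lam|<1-\sig$, I would factor
\[
A_c-\lam I = L\bigl(I + L^{-1}(V_c R - \lam I)\bigr)
\]
and observe that $\|L^{-1}(V_c R-\lam I)\|=\|V_c R-\lam I\|\leq \sig+|\lam|<1$, so the bracketed operator is invertible by the Neumann series and hence $A_c-\lam I$ is invertible. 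This gives $\Spec(A_c)\subseteq\bkt{1-\sig,1+\sig}$.

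For the numerical range bound, I would use $\Spec(A_c)\subseteq \overline{\Num(A_c)}$ and exploit that, for any unimodular $\omega$,
\[
\Re(\omega\langle A_cf,f\rangle)=\tfrac{1}{2}\langle(\omega A_c+\bar\omega A_c^*)f,f\rangle,
\]
so it suffices to bound the operator norm of the Hermitian tridiagonal matrix $T_\omega:=\omega A_c+\bar\omega A_c^*$. Since $A_c^*=LV_c+R$, the superdiagonal entries of $T_\omega$ are $\omega+\bar\omega c_{n+1}$ and the subdiagonal entries are $\omega c_n+\bar\omega$. The key observation is that for the specific choices $\omega=\rme^{\pm i\pi/4}$, every such off-diagonal entry has modulus exactly $\sqrt{1+\sig^2}$, regardless of whether $c_n=\pm\sig$, because the cross term $2\Re(\omega\bar\omega c_{n+1})=2c_{n+1}\cos(\pi/2)$ vanishes. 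Splitting $T_\omega$ into its upper and lower triangular parts, each being a weighted shift of norm at most $\sqrt{1+\sig^2}$, yields $\|T_\omega\|\leq 2\sqrt{1+\sig^2}$.

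Writing $\lam=x+iy\in\Num(A_c)$, taking $\omega=\rme^{i\pi/4}$ gives $\Re(\omega\lam)=(x-y)/\sqrt 2$, hence $|x-y|\leq\sqrt{2(1+\sig^2)}$; similarly $\omega=\rme^{-i\pi/4}$ gives $|x+y|\leq\sqrt{2(1+\sig^2)}$. Since $|x|+|y|=\max(|x+y|,|x-y|)$, the desired bound follows, and passes to $\Spec(A_c)\subseteq\overline{\Num(A_c)}$ by closure. The only genuinely slick step is the choice $\omega=\rme^{\pm i\pi/4}$, which is forced by wanting to control $|x|+|y|$ via the two rotated real parts; everything else is routine operator-theoretic bookkeeping.
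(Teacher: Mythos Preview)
Your proof is correct and follows essentially the same approach as the paper: the annulus bound via a Neumann-series perturbation of the unitary shift $L$ (the paper handles both radii at once using $\norm (L-zI)^{-1}\norm=\left|1-|z|\right|^{-1}$, but this is the same idea), and the diamond bound via the numerical range of $\omega A_c+\bar\omega A_c^*$ for $\omega=\rme^{\pm i\pi/4}$, which is precisely the paper's operator $B=(A_c+A_c^*)-i(A_c-A_c^*)$ up to a scalar factor $\sqrt{2}$. One small slip: the vanishing cross term should read $2c_{n+1}\Re(\omega^2)=2c_{n+1}\cos(\pi/2)$ rather than $2\Re(\omega\bar\omega c_{n+1})$, but your conclusion is correct.
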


\begin{proof}
We regard $V_cR$ as a small perturbation of $L$ in the identity
$A_c=V_cR+L$, noted in the proof of Lemma \ref{square}. Since $L$ is a unitary operator with spectrum
$\{z:|z|=1\}$, we have
\[
\norm (L-zI)^{-1}\norm =\left| 1-|z|\, \right|^{-1}
\]
for all $z$ not on the unit circle. The inclusion (\ref{ring}) now
follows from $\norm V_cR\norm=\sig$ by a perturbation argument; see
\cite[Th.~9.2.13]{LOTS}.

The inclusion (\ref{numrange}) depends on an estimate of the
numerical range of $A_c$. Following \cite[Section~9.3]{LOTS},
$x+iy\in\Num(A_c)$ if there exists $f\in \ell^2(\Z)$ such that
$\norm f\norm =1$ and $x+iy=\langle A_c f,f\rangle$. This implies
that
\[
x=\frac{1}{2}\langle (A_c+A_c^\ast)f,f\rangle, \hspace{2em} %
y=-\frac{i}{2}\langle (A_c-A_c^\ast)f,f\rangle.
\]
Therefore
\[
x+y=\frac{1}{2}\langle Bf,f\rangle
\]
where
\[
B=(A_c+A_c^\ast)-i(A_c-A_c^\ast).
\]
A simple calculation shows that $B_{m,n}=0$ unless $|m-n|=1$, while
\[
B_{n,n+1}=\overline{ B_{n+1,n}}=(1\pm \sig)-i(1\mp \sig).
\]
Therefore $|B_{n+1,n}|=|B_{n,n+1}|=\sqrt{2(1+\sig^2)}$ for all $n\in\Z$ and
\[
x+y \leq \frac{1}{2}\norm B\norm\leq \sqrt{2(1+\sig^2)}.
\]
The other three steps in the proof of the bound for $|x|+|y|$ are
similar.
\end{proof}

The statement of our main theorem refers to the open set
\begin{equation}
H_\sig=E_\sig\cap E_{-\sig},\label{Hsigma}
\end{equation}
the intersection of the ellipses $E_\sig$ and $E_{-\sig}$. This set
satisfies
\begin{equation}
\bkt{ 0,1-\sig} \subseteq \overline{H_\sig}\subseteq \bkt{0,r_{\sig}}
\label{Hsigsize}
\end{equation}
where
\begin{equation}
r_{\sig}=\frac{  1-\sig^{2}  }%
{  \sqrt{1+\sig^{2}\,}  }.\label{rndef}
\end{equation}

\begin{theorem}\label{maintheorem3}
If $0<\sig<1$ then
\[
\{ \lam:|\lam|\leq 1\}\backslash H_\sig \subseteq S_\sig.
\]
\end{theorem}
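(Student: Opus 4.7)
My plan is to cover $\{|\lam|\leq 1\}\setminus H_\sig$ by spectra of paired periodic operators $A_{c^*}$ as in Lemma~\ref{lem_two_periods}, all of which lie in $S_\sig$ by Proposition~\ref{quasiergodic}. For each $n\in\Z_+$, I apply Lemma~\ref{lem_two_periods} to the sequence $\sig c^{(n,+)}\in\Ome_\sig$, which is periodic by Lemma~\ref{lem_periodic}, taking $\tau$ in turn to be $\sig$ and $-\sig$; this yields
$$
\overline{I_{\sig c^{(n,+)}}}\setminus E_\sig \subseteq S_\sig \quad\mbox{and}\quad \overline{I_{\sig c^{(n,+)}}}\setminus E_{-\sig} \subseteq S_\sig.
$$

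The key point is then that $\bigcup_{n\in\Z_+}\overline{I_{\sig c^{(n,+)}}}\supseteq \{|\lam|<1\}$. By Lemma~\ref{lem_sf}, $I_{\sig c^{(n,+)}}=\{\rho \rme^{\ri\theta}: \rho<\rho_n^+(\theta,\sig)\}$, and a short manipulation of the explicit formula shows $\rho_n^+(\theta,\sig)\geq (1-\sig^{2^n})^{1/2^n}$ for every $\theta$, with $(1-\sig^{2^n})^{1/2^n}\to 1$ as $n\to\infty$ since $\sig\in(0,1)$. Thus every $\lam$ with $|\lam|<1$ lies in $\overline{I_{\sig c^{(n,+)}}}$ for all sufficiently large $n$. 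Combining this with the displayed inclusions and the identity $A\setminus(B\cap C)=(A\setminus B)\cup(A\setminus C)$, I obtain $\{|\lam|<1\}\setminus H_\sig \subseteq S_\sig$.

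The boundary $\{|\lam|=1\}$ is handled using closure: since $S_\sig$ is closed, $\overline{\{|\lam|<1\}\setminus H_\sig}\subseteq S_\sig$, and by \eqref{Hsigsize} one has $\overline{H_\sig}\subseteq\{|\mu|\leq r_\sig\}$ with $r_\sig<1$. Hence any $\lam$ with $|\lam|=1$ satisfies $\lam\notin \overline{H_\sig}$ and is approximated by $(1-1/r)\lam\in\{|\mu|<1\}\setminus H_\sig$ as $r\to\infty$, giving $\{|\lam|\leq 1\}\setminus H_\sig = \overline{\{|\lam|<1\}\setminus H_\sig} \subseteq S_\sig$. The substantive work is all contained in Lemmas~\ref{lem_two_periods} and~\ref{lem_sf}; what remains is the set-theoretic organization of the covering and the closure argument at the unit circle. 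The step taking the most care is the verification that $\min_\theta\rho_n^+(\theta,\sig)\to 1$, but this is routine from the explicit formula, using that $(1\pm\sig^{2^n})^{1/2^n}\to 1$ as $n\to\infty$.
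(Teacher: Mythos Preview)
Your proof is correct and follows essentially the same approach as the paper: use Lemma~\ref{lem_sf} to bound $I_{\sig c^{(n,+)}}$ from below by a disc whose radius tends to $1$, then apply Lemma~\ref{lem_two_periods} with both choices of $\tau$ and Proposition~\ref{quasiergodic}, and finish by closedness of $S_\sig$. Your lower bound $(1-\sig^{2^n})^{1/2^n}$ is in fact identical to the paper's $\rho_{\sig,n}=\bigl((1-\sig^{2^{n+1}})/(1+\sig^{2^n})\bigr)^{1/2^n}$ after factoring $1-\sig^{2^{n+1}}=(1-\sig^{2^n})(1+\sig^{2^n})$, and your closure argument at $|\lam|=1$ is slightly more explicit than the paper's.
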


\begin{proof} Note first that if $c^\pm$ and $\rho_n^\pm(\theta,\sig)$ are defined as in Lemma \ref{lem_sf}, then
$$
\rho_n^\pm(\theta, \sig) \geq \rho_{\sig,n} = \left(\frac{1-\sig^{2^{n+1}}}{1+\sig^{2^n}}\right)^{1/2^n},
$$
for all $\theta\in \R$, so that $I_{c^\pm} \supset  \{\lam:|\lam|< \rho_{\sig,n}\}$. Thus, defining $c^*\in \Ome_\sig$ as in Lemma \ref{lem_two_periods}, with $c=c^+$ or $c^-$ and $\tau=\pm\sig$, we see from Lemma \ref{lem_two_periods} that
\begin{equation} \label{spectral_inc}
\Spec(A_{c^*}) \supset \overline{I_{c}}\setminus E_\tau \supset \{\lam:|\lam| \leq \rho_{\sig,n}\}\setminus E_\tau.
\end{equation}
Applying Proposition \ref{quasiergodic}, it follows that, for all $n\in \N$,
$$
S_\sig \supset \{\lam:|\lam| \leq \rho_{\sig,n}\}\setminus H_\tau.
$$
The theorem follows since $\sup_{n} \rho_{\sig,n} = 1$ and $S_\sig$ is closed.
\end{proof}

\vspace{-2ex}

\noindent
%%%%%%%%%%%%%%%%%%%%%%%%%%%%%%%%%%%%%%%%%%%%%%%%%%%%%%%%%%%%%%%%%%%%%%%%%%%%
\ifpics{  %%%%% 30 png pictures (for PDFLatex only) %
\begin{center}
\begin{tabular}{cc}
\hspace*{-5ex} \includegraphics[width=0.55\textwidth]{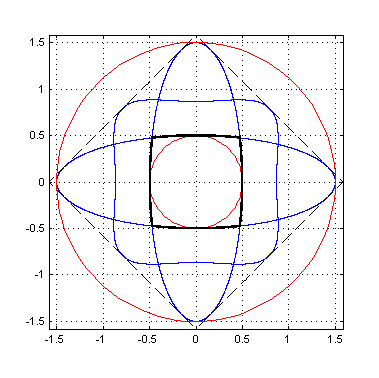}&
\hspace*{-5ex} \includegraphics[width=0.55\textwidth]{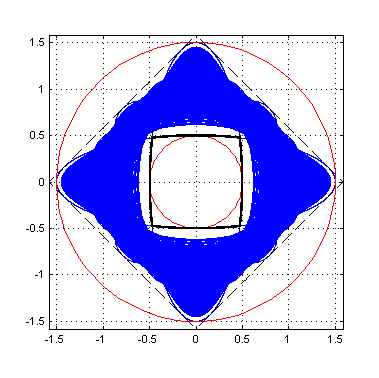}
\end{tabular}
\end{center}

\vspace*{-7ex}

}\fi %%%%% 30 png pictures (for PDFLatex only) %
%%%%%%%%%%%%%%%%%%%%%%%%%%%%%%%%%%%%%%%%%%%%%%%%%%%%%%%%%%%%%%%%%%%%%%%%%%%%

\begin{figure}[h]
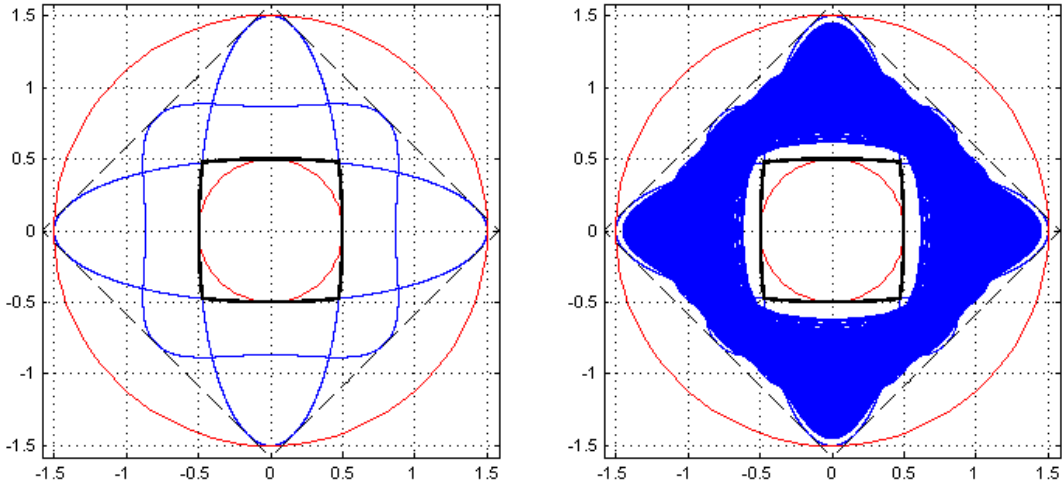

\caption{\small Plots of $\Spec(A_c)$ for the case when $c$ is periodic and $\sig=0.5$. The two plots show the sets $\pi_{N,\sig}\subset S_\sig$, the union of the spectra for all sequences $c$ of period $\leq N$, for $N=2$ (left) and $N=12$ (right). The two ellipses visible in the left-hand plot, the boundaries of $E_\sig$ and $E_{-\sig}$ defined in \eqref{ell1} and \eqref{ell2}, are the components of $\pi_{1,\sig}$; the other closed curve is $\Spec(A_c)$ for $c=\sig c^{(1,-)}$, i.e.\ $c_n=(-1)^n\sig$, given explicitly in Lemma \ref{lem_sf}. Also shown in each plot are the boundaries of the inclusion sets from Lemma \ref{lemmaA}, namely the circles of radius $1\pm\sig$, and, in dashed lines, the boundary of the set \eqref{numrange}. The boundary of $H_\sig$, which we conjecture is a hole in the spectrum $S_\sig$, is highlighted in a thicker line.}
\label{fig:30pics1}
\end{figure}

Lemma \ref{lemmaA} and Theorem \ref{maintheorem3} together establish that there is a hole in $S_\sig$ which is at least as big as $\{\lam:|\lam|<1-\sig\}$ and which is no larger than $H_\sig$. The numerical
computations we have been able to carry out are consistent with a hypothesis that the hole is precisely the set $H_\sig$, i.e.\ they are consistent with a hypothesis that $\Spec(A_c)\cap
H_\sig=\emptyset$ for every $c\in \Ome_\sig$, and hence for
every $c\in\cE_\sig$.

It should be pointed out, however, that these numerical computations are only for instances where $c$ is periodic, for which we have a characterisation of the spectrum in Lemma \ref{BIO}. Thus, strictly speaking, our calculations are evidence of the possibly weaker result that $\pi_{\infty,\sig} \cap H_\sig = \emptyset$; they become evidence that $\Spec(A_c)\cap
H_\sig=\emptyset$ with a hypothesis that $\pi_{\infty,\sig}$ is dense in the part of $S_\sig$ that is contained in the unit disc. This latter statement may or may not be true for $\sig\in (0,1)$, but we have shown in Theorem \ref{denseness} that it is true for $\sig=1$.

As an example of the numerical computations we have carried out, the right hand side of Figure~\ref{fig:30pics1} shows the union of $\Spec(A_c)$ over all periodic $c\in \Ome_\sig$ for which the period $N\leq 12$. It is clear from this figure that $\pi_{12,\sig}\cap H_\sig = \emptyset$ for $\sig=0.5$. We note that, rather than using the characterisation in Lemma \ref{BIO}, we use for these computations the standard Bloch-decomposition formula (e.g.\ \cite{LOTS}) that
\begin{equation} \label{spectrumperiodic}
\Spec(A_c) = \bigcup_{|\alpha|=1} \Spec(A^{(N,\mathrm{per})}_{c,\alpha}),
\end{equation}
where $A^{(N,\mathrm{per})}_{c,\alpha}$ is the $N\times N$ matrix defined in \eqref{finitematrix2} below.

\noindent
%%%%%%%%%%%%%%%%%%%%%%%%%%%%%%%%%%%%%%%%%%%%%%%%%%%%%%%%%%%%%%%%%%%%%%%%%%%%
\ifpics{  %%%%% 30 png pictures (for PDFLatex only) %
\begin{center}
\begin{tabular}{c}
\hspace*{-5ex} \includegraphics[width=0.6\textwidth]{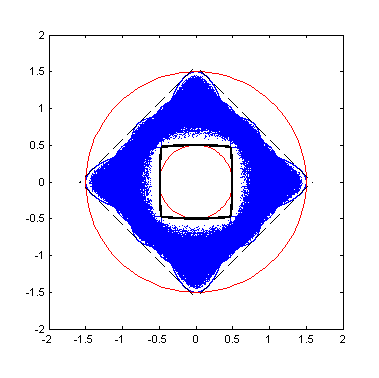}
\end{tabular}
\end{center}

\vspace*{-7ex}
}\fi %%%%% 30 png pictures (for PDFLatex only) %
%%%%%%%%%%%%%%%%%%%%%%%%%%%%%%%%%%%%%%%%%%%%%%%%%%%%%%%%%%%%%%%%%%%%%%%%%%%%

\begin{figure}[h]
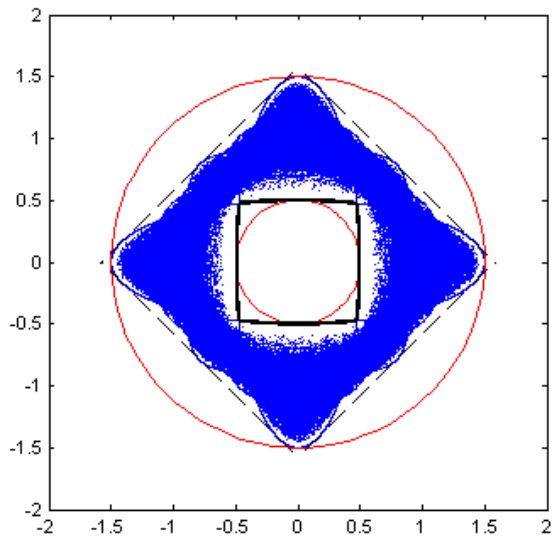

\caption{\small Plots of $10^5$ instances of $\Spec(A_{c,\alpha}^{(N,\mathrm{per})})$ for
$\sig=0.5$, computed as described in the text, with $N$ randomly chosen in the range $1\leq N\leq 100$. %Also shown in each plot are the boundaries of the inclusion sets from Lemma \ref{lemmaA}, namely the circles of radius $1\pm\sig$, and, in dashed lines, the boundary of the set \eqref{numrange}. The boundary of $H_\sig$, which we conjecture is a hole in the spectrum $S_\sig$, is highlighted in a thicker line.
}
\label{Hsigplot}
\end{figure}

It is not feasible to calculate $\pi_{N,\sig}$, the union of all $2^{N-1}$ periodic spectra of period $N$, for very much larger values of $N$. In Figure~\ref{Hsigplot} we sample $\pi_{100,\sig}$, for $\sig=0.5$, plotting the union of
the spectra of $10^5$ randomly chosen $N\times N$ matrices $A_{c,\alpha}^{(N,\mathrm{per})}$.
By randomly chosen we mean here that, in each realisation,  $N\in \{1,...,100\}$ is randomly chosen, with higher probabilities for the smaller matrix sizes, and then the vector $c=(c_1,...,c_N)$ is randomly chosen, with each $c_n=\pm \sig$ independent and identically distributed with $\mathrm{Pr}(c_n=\sig)=0.5$, and finally the phase factor $\alpha$ is randomly chosen from a uniform measure on the unit circle. We see in the figure that clearly, as they have to, the spectra are constrained to lie in the inclusion sets shown in Lemma \ref{lemmaA}. We also note that all the spectra lie outside $H_\sig$.

\section{Second proof of the main theorem}\label{secondproofsection}

In Theorem~\ref{maintheorem6} of this section we provide a second
proof that
\begin{equation}
\{\lam:|\lam|\leq 1\}\backslash H_\sig\subseteq
S_\sig\label{secondversion}
\end{equation}
for all $\sig\in (0,1)$. This proof has a lot in common with the
previous one, but it reveals more about the asymptotic behaviour of
the solutions of the second order recurrence relation for certain
choices of $c\in\Ome_\sig$. A key role in this section is played
by the sequence $c_e\in \Ome_\sig$ defined in Section \ref{sec:maps}, which sequence was central to the proof of Theorem \ref{maintheorem} that appears
in \cite{CCL} (see Theorem \ref{CCLprop3} above for more details).

We start with some calculations that do not depend on $\sig$.
Throughout this section $\widetilde{c}_n \in \{ \pm 1\}$ is defined
for all $n\geq 1$ by the rules $\widetilde{c}_1=1$,
$\widetilde{c}_{2n}=\widetilde{c}_{2n-1}\widetilde{c}_n$ and
$\widetilde{c}_{2n}+\widetilde{c}_{2n+1}=0$. (In other words, $\widetilde{c}_n=c_{e,n}$, for $n\geq 1$.)
 The first few values
are shown in Table~\ref{tabledata2}. We will obtain a bound on a
transfer matrix $T_{m,\lam}$  associated with this sequence and use
this bound to prove Theorem~\ref{maintheorem6}.

Let $u:\Z_+\to\C$ be the solution of $u_{n+1}=\lam u_n-\widetilde{c}_n
u_{n-1}$ such that $u_0=0$ and $u_1=1$, so that $u$ is the restriction to $\Z_+$ of the bi-infinite sequence $u_e$ already studied in \cite{CCL} and discussed in Section \ref{sec:maps}. We have observed already in Proposition \ref{CCLprop} that
$u_n$ is a polynomial of degree $n-1$ in $\lam$ with integer coefficients for
all $n\geq 2$. Similarly, if $v:\Z_+\to\C$ is the solution of
$v_{n+1}=\lam v_n-\widetilde{c}_n v_{n-1}$ such that $v_0=1$ and
$v_1=0$, it is easy to see that $v_n$ is a polynomial of degree $n-2$ with integer
coefficients for all $n\geq 2$.

\begin{table}[h]
\[
\begin{array}{rrrr}
n  &  \widetilde{c}_n       & u_n    & v_n   \\
\hline
1&\hspace{2em}1&1& \,\,\,\,0 \\
2&1&\lam& -1 \\
3&-1&\lam^2-1& -\lam  \\
4&-1&\lam^3& -\lam^2-1\\
5&1&\lam^4+\lam^2-1&-\lam^3 -2\lam\\
6&-1&\lam^5-\lam&  -\lam^4-\lam^2 +1\\
7&1&\lam^6+\lam^4-1& -\lam^5 -2\lam^3 -\lam \\
8&-1&\lam^7& -\lam^6 -\lam^4 -1\\
9&1&\hspace{2em}\lam^8+\lam^6+\lam^4-1&\hspace{2em}-\lam^7-2
\lam^5-2\lam^3-2\lam
\end{array}
\]
\caption{Values of $\widetilde{c}_n,\, u_n, \, v_n$ for $1\leq n\leq
9$.\label{tabledata2}}
\end{table}

One may check the computations of $u_m$ and $v_m$ in Table \ref{tabledata2} by confirming the
determinantal identity $u_mv_{m+1}-v_mu_{m+1}=\pm 1$ for all $m\geq
1$, the left hand side being a polynomial in $\lam$. Here we are referring to a determinantal identity for the transfer
matrix $T_{m,\lam}$, defined by
\[
T_{m,\lam}=\mtrx{ v_{m}&u_{m}\\ v_{m+1}&u_{m+1}},
\]
which transfers the data of any solution of $x_{n+1}=\lam
x_n-\widetilde{c}_n x_{n-1}$ from $\{ 0,1 \}$ to $\{ m,m +1 \}$ in
the sense that
\[
T_{m,\lam}\vctr{x_0\\ x_1}=\vctr{ x_m\\x_{m+1}}.
\]
It is easy to verify (see Lemma \ref{BIO} above) that
\begin{equation}
T_{m,\lam}=X_mX_{m-1}\ldots X_1\label{TSprod}
\end{equation}
where
\begin{equation}
X_r=\mtrx{0&1\\ -\widetilde{c}_r&\lam}\label{Sdetvalue}
\end{equation}
has determinant $\widetilde{c}_r\in \{\pm 1\}$ for every $r\geq 1$.

The proof of Theorem~\ref{maintheorem6} we will give below was motivated by numerical
evidence that
\[
\tr(T_{2^r,\lam}) =\lam^{2^r}-2
\]
holds for all $r\geq 1$ and all $\lam \in\C$; see Table \ref{tabledata3}. We prove this crucial
identity in Lemma~\ref{Ttau2}.

\begin{table}[h]
\[
\begin{array}{rr}
n  &  \hspace{2em}\tr(T_{n,\lam})= u_{n+1}+ v_n \\
\hline
1&\lam \\
2&\lam^2-2\\
3& \lam^3-\lam \\
4&\lam^4-2\\
5&\lam^5-\lam^3-3\lam\\
6&\lam^6-\lam^2\\
7& \lam^7-\lam^5-2\lam^3-\lam\\
8&\lam^8-2
\end{array}
\]
\caption{Values of $\tr(T_{n,\lam})$ for $1\leq n\leq
8$.\label{tabledata3}}
\end{table}

\begin{lemma}\label{Tdet1}
Let $T$ be a $2\times 2$ matrix with determinant $\del$ and trace
$\tau$. If $\tau^2\not= 4\del$ and $\gam$ is the absolute value of
the larger root of $z^2-\tau z+\del=0$ then there exists a constant
$b$ such that
\[
\norm T^r\norm\leq b\,\gam^r
\]
for all $r\geq 1$. If $\tau^2 = 4\del$ then for every $\eps >0$
there exists a constant $b_\eps$ such that
\begin{equation}
\norm T^r\norm\leq b_\eps(\gam+\eps)^r\label{jordan}
\end{equation}
for all $r\geq 1$.
\end{lemma}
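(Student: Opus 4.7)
The plan is to reduce this to a standard computation in Jordan canonical form, with the two cases corresponding to whether or not $T$ is diagonalisable.

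For \textbf{Case 1}, since the two roots $z_1,\, z_2$ of $z^2-\tau z+\del=0$ are distinct, $T$ is diagonalisable over $\C$. Writing $T=PDP^{-1}$ with $D=\mathrm{diag}(z_1,z_2)$, I would take $b=\norm P\norm\,\norm P^{-1}\norm$ and obtain
\[
\norm T^r\norm = \norm P D^r P^{-1}\norm \leq \norm P\norm\,\norm P^{-1}\norm \max(|z_1|,|z_2|)^r = b\,\gam^r.
\]

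For \textbf{Case 2}, the roots coincide at $z_1=z_2=\tau/2$, so $\gam=|\tau/2|$. If $T$ happens to be a scalar multiple of the identity the bound $\norm T^r\norm=\gam^r$ is immediate, so the interesting subcase is when $T$ is similar, via some $P$, to the Jordan block
\[
J=\mtrx{\tau/2 & 1 \\ 0 & \tau/2},
\]
for which a direct induction gives
\[
J^r=\mtrx{(\tau/2)^r & r(\tau/2)^{r-1} \\ 0 & (\tau/2)^r}.
\]
Thus $\norm J^r\norm\leq \gam^r + r\gam^{r-1}$. If $\gam>0$, then for any $\eps>0$ the ratio $r\gam^{r-1}/(\gam+\eps)^r$ tends to $0$ as $r\to\infty$, so it is bounded by some constant $C_\eps$, and I would absorb $\norm P\norm\,\norm P^{-1}\norm$ and $C_\eps$ into $b_\eps$ to obtain \eqref{jordan}. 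The degenerate subcase $\gam=0$ forces $\tau=\del=0$, making $T$ nilpotent with $T^2=0$, for which \eqref{jordan} is trivial.

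There is no real obstacle here; the statement is a textbook consequence of the Jordan decomposition of $2\times 2$ matrices, and the only minor care needed is to separately handle the degenerate subcases (scalar matrix and $\gam=0$) in Case 2 where the Jordan form argument gives nothing beyond what is obvious.
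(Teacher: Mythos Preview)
Your proposal is correct and follows essentially the same approach as the paper: diagonalise in Case~1 with $b=\norm P\norm\,\norm P^{-1}\norm$, and invoke the Jordan canonical form in Case~2. The paper's own proof in fact says less than you do---it simply remarks that the weaker bound \eqref{jordan} arises ``because one has to use the Jordan canonical form for $T$'' without writing out the computation of $J^r$ or separately treating the scalar and nilpotent subcases---so your version is a slightly more detailed execution of the same argument.
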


\begin{proof}
The eigenvalues $z_\pm$ of $T$ are the roots $z$ of $z^2-\tau
z+\del=0$. The condition $\tau^2\not= 4\del$ implies that the
eigenvalues are distinct, so $T$ is diagonalizable -- there exists
an invertible matrix $B$ such that
\[
T=B\mtrx{ z_+&0\\0&z_-} B^{-1}.
\]
Therefore
\begin{eqnarray*}
\norm T^r\norm =\norm B\mtrx{ z_+^r&0\\0&z_-^r} B^{-1}\norm \leq  \norm B\norm \, \norm B^{-1}\norm \gam^r.
\end{eqnarray*}
The slightly worse bound (\ref{jordan}) is obtained when $\tau^2=
4\del$ because one has to use the Jordan canonical form for $T$.
\end{proof}

\begin{lemma}\label{Tdet3}
The identity $\det(T_{2^n,\lam}) =1$ holds for all $n\geq 1$ and all
$\lam\in\C$.
\end{lemma}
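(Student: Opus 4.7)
The plan is to reduce the determinantal identity to a purely combinatorial identity about the sign sequence $\widetilde{c}$, and then verify that identity by a one-step telescoping induction exploiting the recursive definition of $\widetilde{c}$.

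First I would use the factorisation $T_{2^n,\lam} = X_{2^n}X_{2^n-1}\cdots X_1$ from \eqref{TSprod}. Since each $X_r$ has the form \eqref{Sdetvalue} with $\det(X_r)=\widetilde{c}_r$, multiplicativity of the determinant gives
\[
\det(T_{2^n,\lam}) \;=\; \prod_{r=1}^{2^n}\widetilde{c}_r,
\]
independently of $\lam$. So the statement reduces to the combinatorial claim that the sign product $P_n := \prod_{r=1}^{2^n}\widetilde{c}_r$ equals $1$ for every $n\geq 1$.

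Next I would exploit the defining rule $\widetilde{c}_{2k} = \widetilde{c}_{2k-1}\widetilde{c}_k$, which, together with $\widetilde{c}_r \in \{\pm 1\}$, immediately gives $\widetilde{c}_{2k-1}\widetilde{c}_{2k} = \widetilde{c}_{2k-1}^{\,2}\widetilde{c}_k = \widetilde{c}_k$ for every $k\geq 1$. Grouping the $2^n$ factors of $P_n$ into the $2^{n-1}$ consecutive pairs indexed by $k=1,\dots,2^{n-1}$ then produces the contracting recurrence
\[
P_n \;=\; \prod_{k=1}^{2^{n-1}} \widetilde{c}_{2k-1}\widetilde{c}_{2k} \;=\; \prod_{k=1}^{2^{n-1}} \widetilde{c}_k \;=\; P_{n-1}.
\]
Iterating down to $P_0 = \widetilde{c}_1 = 1$ (or, equivalently, taking $P_1 = \widetilde{c}_1\widetilde{c}_2 = 1$ as base case, using $\widetilde{c}_2 = \widetilde{c}_1\widetilde{c}_1 = 1$) closes the induction on $n$.

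I do not expect any real obstacle here: the entire argument is essentially one line once the determinant is factored and the pairwise contraction $\widetilde{c}_{2k-1}\widetilde{c}_{2k} = \widetilde{c}_k$ is noticed. The point is that the recursive construction of $\widetilde{c}$ is precisely engineered so that consecutive pairs telescope to a product of the same form over indices $1,\dots,2^{n-1}$. This determinantal identity will then combine with the trace formula of the next Lemma~\ref{Ttau2} to place $T_{2^n,\lam}$ in the setting of Lemma~\ref{Tdet1} with $\delta=1$, which is how it will be used in the proof of Theorem~\ref{maintheorem6}.
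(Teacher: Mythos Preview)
Your proof is correct and is essentially the same as the paper's: both factor $\det(T_{2^n,\lam})=\prod_{r=1}^{2^n}\widetilde{c}_r$, use the identity $\widetilde{c}_{2k-1}\widetilde{c}_{2k}=\widetilde{c}_k$ to halve the length of the product, and iterate down to $\widetilde{c}_1=1$. The paper phrases the halving step as $\det(T_{2m,\lam})=\det(T_{m,\lam})$ for all $m\in\N$, but this is the same telescoping you describe.
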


\begin{proof}
If $m\in\N$ then (\ref{TSprod}) and (\ref{Sdetvalue}) imply
\begin{eqnarray*}
\det(T_{2m,\lam})&=& \prod_{r=1}^m \det(X_{2r}X_{2r-1})\\
&=&\prod_{r=1}^m (\widetilde{c}_{2r}\widetilde{c}_{2r-1})\\
&=&\prod_{r=1}^m \widetilde{c}_{r}\\
&=& \det(T_{m,\lam}).
\end{eqnarray*}
It follows by induction that
\[
\det(T_{2^n,\lam})=\det(T_{1,\lam})=\widetilde{c}_1=1.
\]
\end{proof}

%We next summarize the results from \cite[Proposition~2.1]{CCL} that
%we will need.

%\begin{proposition}\label{CCLprop}
%Define $p_{i,j}$ for $i,j\in\N$ by the formula
%\[
%u_i=\sum_{j=1}^i p_{i,j}\lam^{j-1}
%\]
%with $p_{i,j}=0$ if $j>i$. Let $Y$ denote the set of $(i,j)\in \N^2$ such that $p_{i,j}\not= 0$. Then $p_{i,j}\in\{0,1,-1\}$ for all $i,\, j$ and $(i,j)\in Y$ if and only %if one of the following holds.\\
%(1) $i=j=1$;\\
%(2) $i$ and $j$ are both even and $(i/2,j/2)\in Y$;\\
%(3) $i$ and $j$ are both odd and $((i+1)/2,(j+1)/2)\in Y$;\\
%(4) $i$ and $j$ are both odd and $((i-1)/2,(j+1)/2)\in Y$.
%\end{proposition}

The following lemma depends on Proposition \ref{CCLprop} above, abstracted from \cite{CCL}, which notes properties of the integer coefficients $p_{i,j}$ of the polynomials
\[
u_i=\sum_{j=1}^i p_{i,j}\lam^{j-1}.
\]

\begin{lemma}\label{Ttau1}
The polynomial $u_m$ is even for odd $m$ and odd for even $m$. Its
leading term is $\lam^{m-1}$. If $m=2^n$ and $n\geq 2$ then
\begin{eqnarray}
u_m&=& \lam^{m-1},\label{ufortau1}\\
u_{m+1}&=&-1+\lam^{m/2}\sum_{r=0}^{m/4} \alp_r
\lam^{2r}\label{ufortau2}
\end{eqnarray}
where $\alp_r\in \{0,1,-1\}$ for all $r$.
\end{lemma}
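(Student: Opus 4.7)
My plan is to establish the three assertions in order, all via induction, but with different engines.

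Parts 1 and 2 yield to a routine induction on $m$ using the three-term recurrence $u_{n+1}=\lam u_n-\widetilde{c}_n u_{n-1}$ with $u_0=0$, $u_1=1$. The leading coefficient is preserved at $1$ because $\deg(\lam u_n)=n$ strictly exceeds $\deg(\widetilde{c}_n u_{n-1})=n-2$, giving $u_m=\lam^{m-1}+\cdots$. The parity alternates because $\lam u_n$ and $\widetilde{c}_n u_{n-1}$ share the same parity (opposite to that of $u_n$), so $u_{n+1}$ has the parity of $u_{n-1}$, and the base cases $u_1$ (even) and $u_2$ (odd) launch the pattern claimed.

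For the first equality in Part 3, I would appeal to Proposition \ref{CCLprop}. If $(2^n,j)\in Y$ with $n\geq 1$, then, since $2^n$ is even, only case~(2) can apply, forcing $j$ to be even and $(2^{n-1},j/2)\in Y$. Iterating while the first coordinate remains greater than one (it passes through $2^n,2^{n-1},\ldots,2$, all even) brings us to $(1,j/2^n)\in Y$, whence $j=2^n$. Combined with Part 2, this gives $u_{2^n}=\lam^{2^n-1}$.

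The second equality in Part 3 I would prove by induction on $n\geq 2$. The base case $n=2$ is read off Table \ref{tabledata2}: $u_5=\lam^4+\lam^2-1$ has the stated shape with $\alp_0=\alp_1=1$. For the inductive step I analyse when $(2^n+1,j)\in Y$. Since $2^n+1$ is odd, only cases~(3) and (4) of Proposition \ref{CCLprop} can apply and $j$ must be odd. Case~(4) demands $(2^{n-1},(j+1)/2)\in Y$, which by the first equality of Part 3 forces $j=2^n-1$. Case~(3) demands $(2^{n-1}+1,(j+1)/2)\in Y$, and the inductive hypothesis confines $(j+1)/2$ to $\{1\}\cup\{2^{n-2}+1,2^{n-2}+3,\ldots,2^{n-1}+1\}$, equivalently the exponent $j-1$ to $\{0\}\cup\{2^{n-1},2^{n-1}+2,\ldots,2^n\}$. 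The exponent $2^n-2$ coming from case~(4) already sits in this set, so the full set of admissible exponents is exactly $\{0\}\cup\{2^{n-1},2^{n-1}+2,\ldots,2^n\}$, matching (\ref{ufortau2}); the constraint $\alp_r\in\{0,1,-1\}$ is then immediate from Proposition \ref{CCLprop}.

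The only thing not captured by Proposition \ref{CCLprop} is the \emph{sign} of the constant term, which I would pin down by evaluating the recurrence at $\lam=0$: since $u_k(0)=0$ for even $k$ by Part 1, iteration gives
\[
u_{2^n+1}(0)=(-1)^{2^{n-1}}\prod_{j=1}^{2^{n-1}}\widetilde{c}_{2j},
\]
and a short secondary induction on $n$, exploiting $\widetilde{c}_{2j}=\widetilde{c}_{2j-1}\widetilde{c}_j$ together with $\widetilde{c}_{2j+1}=-\widetilde{c}_{2j}$ (for instance, by verifying the self-similar identity $\prod_{j=1}^{2^{k+1}}\widetilde{c}_{2j}=\prod_{j=1}^{2^k}\widetilde{c}_{2j}$ via pairing consecutive factors), reduces this to $-1$ for $n\geq 1$. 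This final sign check is the main obstacle, since Proposition \ref{CCLprop} records only whether a coefficient vanishes and not its sign; everything else in the argument is clean bookkeeping of which $(i,j)$ lie in $Y$.
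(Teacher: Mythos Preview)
Your proof is correct and follows essentially the same route as the paper: induction for the parity and leading-term claims, Proposition~\ref{CCLprop} to control the support of the coefficients of $u_{2^n}$ and $u_{2^n+1}$, and evaluation of the recurrence at $\lam=0$ together with a self-similarity identity for the $\widetilde{c}$-products to fix the constant term at $-1$. One small slip: when you translate $(j+1)/2\in\{1\}\cup\{2^{n-2}+1,2^{n-2}+3,\ldots,2^{n-1}+1\}$ to exponents $j-1$, the map $k\mapsto 2k-2$ doubles the spacing, so the image actually has step $4$, not step $2$; thus the exponent $2^n-2$ from case~(4) is genuinely new rather than ``already in'' the case~(3) set. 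This is harmless, since the union is still contained in $\{0\}\cup\{2^{n-1},2^{n-1}+2,\ldots,2^n\}$, which is all that (\ref{ufortau2}) requires.
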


\begin{proof}
The statements in the first two sentences may be proved by
induction, using the definition of $u_m$. We prove (\ref{ufortau1})
and (\ref{ufortau2}) for $m=2^n$ by induction in $n$, noting that
both hold for $n\leq 3$; see Table~\ref{tabledata2}. As in Proposition \ref{CCLprop}, let $Y$ denote the set of $(i,j)\in \N^2$ such that $p_{i,j}\not= 0$.

To prove (\ref{ufortau1}) suppose that $(2^{n+1},j)\in Y$.
Proposition~\ref{CCLprop} implies that $j$ is even and that
$(2^n,j/2)\in Y$. The inductive hypothesis now implies that
$j/2=2^n$, so $j=2^{n+1}$.

To prove (\ref{ufortau2}) suppose that $(2^{n+1}+1,j)\in Y$.
Proposition~\ref{CCLprop} implies that $j$ is odd and either
$(2^{n},(j+1)/2)\in Y$ or $(2^{n}+1,(j+1)/2)\in Y$. In the first
case we have already proved that $(j+1)/2=2^n$, so $j=2^{n+1}-1$. In
the second case the inductive hypothesis implies that $(j+1)/2  \geq
2^{n-1}+1$, so $j\geq 2^{n}+1$ or $(j+1)/2=1$, so $j=1$.

We finally have to evaluate the constant coefficient $\gam_m$ of
$u_m$ when $m=2^n+1$. This may be done by considering the defining
recurrence relation in the case $\lam=0$, namely
$\gam_{r+1}=-\widetilde{c}_r\gam_{r-1}$ subject to $\gam_0=0$ and
$\gam_1=1$. This implies that
\[
\gam_{2m+1}=\prod_{r=1}^m \widetilde{c}_{2r}
\]
for all $m\in\N$. Therefore $\gam_5=\gam_3=-1$ and
\begin{eqnarray*}
\gam_{8m+1}&=&\prod_{r=1}^m%
\left(\widetilde{c}_{8r}\widetilde{c}_{8r-2}%
\widetilde{c}_{8r-4}\widetilde{c}_{8r-6}\right)\\
&=&\prod_{r=1}^m%
\left(\widetilde{c}_{4r}\widetilde{c}_{8r-1}\widetilde{c}_{8r-2}%
\widetilde{c}_{4r-2}\widetilde{c}_{8r-5}\widetilde{c}_{8r-6}\right)\\
&=&\prod_{r=1}^m%
\left(\widetilde{c}_{4r}\widetilde{c}_{4r-2}\right)\\
&=& \gam_{4m+1}
\end{eqnarray*}
for all $m\in\N$. A simple induction now implies that
$\gam_{m}=-1$ for $m=2^n+1$ and all $n\geq 1$.
\end{proof}

\begin{lemma}\label{Ttau2}
If $m=2^n$ and $n\geq 2$ then
\begin{equation}
\tau=\tr(T_{m,\lam})=v_m+u_{m+1}=\lam^m-2\label{tauformula}
\end{equation}
for all $\lam\in\C$.
\end{lemma}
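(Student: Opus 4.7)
The plan is induction on $n$, with base case $n=2$ read directly from Table \ref{tabledata3}.

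For the inductive step, I first exploit the self-similarity of $\tilde c$. The defining relations $\tilde c_{2k}+\tilde c_{2k+1}=0$ and $\tilde c_{2k}\tilde c_{2k-1}=\tilde c_k$ imply that both $U_k=u_{2k}$ and $V_k=v_{2k}$ satisfy the same three-term recurrence as $u$ and $v$ but with $\lambda$ replaced by $\mu=\lambda^2$; checking initial data yields, for every $m\geq 1$,
\begin{eqnarray*}
u_{2m}(\lambda) &=& \lambda\, u_m(\lambda^2), \\
v_{2m}(\lambda) &=& v_m(\lambda^2)-u_m(\lambda^2), \\
u_{2m+1}(\lambda) &=& u_{m+1}(\lambda^2)+\tilde c_{2m+1}\,u_m(\lambda^2),
\end{eqnarray*}
the third extracted from $\lambda u_{2m+1}=u_{2m+2}+\tilde c_{2m+1}u_{2m}$. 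Substituting into $\tr(T_{2m,\lambda})=v_{2m}+u_{2m+1}$ produces the key identity
$$\tr(T_{2m,\lambda})=\tr(T_{m,\lambda^2})+(\tilde c_{2m+1}-1)\,u_m(\lambda^2).$$

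Setting $m=2^n$ and substituting $u_{2^n}(\mu)=\mu^{2^n-1}$ from Lemma \ref{Ttau1}, together with the inductive hypothesis, gives
$$\tr(T_{2^{n+1},\lambda})=\lambda^{2^{n+1}}-2+(\tilde c_{2^{n+1}+1}-1)\,\lambda^{2^{n+1}-2},$$
so the induction closes provided $\tilde c_{2^{n+1}+1}=1$ for $n\geq 2$, i.e.\ $\tilde c_{2^j}=-1$ for $j\geq 3$. To establish this I would introduce $Q_N:=\prod_{r=1}^N \tilde c_r$ and observe the halving identity
$$Q_{2N}=\prod_{r=1}^N \tilde c_{2r-1}\tilde c_{2r}=\prod_{r=1}^N \tilde c_r=Q_N,$$
using $\tilde c_{2r-1}\tilde c_{2r}=\tilde c_r$. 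Splitting $Q_{2N}$ differently into odd- and even-indexed parts and simplifying with $\tilde c_{2r-1}=-\tilde c_{2r-2}$ for $r\geq 2$ yields the closed form $\tilde c_{2N}=(-1)^{N-1}Q_N$. Iterating $Q_{2^k}=Q_{2^{k-1}}=\cdots=Q_1=1$ and specialising to $N=2^{j-1}$ then gives $\tilde c_{2^j}=-Q_{2^{j-1}}=-1$ for $j\geq 2$, which is more than enough.

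The main obstacle is the dyadic sign claim: values of $\tilde c$ at the indices $2^j$ resist direct evaluation from one application of the defining recursion, and a naive induction entangles $\tilde c_{2^j},\tilde c_{2^j-1},\tilde c_{2^j-2},\ldots$ with no visible pattern. The halving identity $Q_{2N}=Q_N$ is what breaks the deadlock, collapsing the product to the trivial value $Q_1=1$ and feeding cleanly into the trace reduction.
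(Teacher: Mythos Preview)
Your proof is correct and takes a genuinely different route from the paper's. The paper does not proceed by induction on $n$; instead it combines the determinant identity $v_m u_{m+1}-u_m v_{m+1}=1$ (Lemma~\ref{Tdet3}) with the detailed structure of $u_m$ and $u_{m+1}$ from Lemma~\ref{Ttau1} (which in turn rests on Proposition~\ref{CCLprop} from \cite{CCL}), passes to the quotient ring $\Z[\lambda]/(\lambda^{m-1})$, and inverts $\hat u_{m+1}=-1+p$ (with $p^2=0$) to recover $\hat v_m=-1-p$, whence $\hat\tau=-2$; a parity/degree argument then pins down $\tau=\lambda^m-2$. Your argument replaces all of this by the single renormalisation identity
\[
\tr(T_{2m,\lambda})=\tr(T_{m,\lambda^2})+(\tilde c_{2m+1}-1)\,u_m(\lambda^2),
\]
which is the transfer-matrix shadow of the $\Gamma_+$ map in Lemma~\ref{square}, together with the sign fact $\tilde c_{2^j}=-1$ for $j\geq 2$. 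Your approach is more elementary and self-contained: it avoids the quotient-ring computation, uses only the first formula (\ref{ufortau1}) of Lemma~\ref{Ttau1} (and in fact your own doubling identity $u_{2m}(\lambda)=\lambda\,u_m(\lambda^2)$ already yields $u_{2^n}=\lambda^{2^n-1}$ directly, bypassing Lemma~\ref{Ttau1} entirely), and the product-halving trick $Q_{2N}=Q_N$ is a clean way to extract the needed sign. The paper's route, by contrast, gets more mileage out of the structural information on $u_{m+1}$ already imported from \cite{CCL}, at the cost of the algebraic detour through the truncated ring.
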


\begin{proof}
The proof uses the identity $v_mu_{m+1}-u_mv_{m+1}=1$ of
Lemma~\ref{Tdet3} together with the two identities proved in
Lemma~\ref{Ttau1}. These are identities within the commutative ring
$\Z(\lam)$ of all polynomials with integer coefficents in the
indeterminate quantity $\lam$, but they imply similar identities in
the commutative ring $\Z(\widehat{\lam})$ of all polynomials with
integer coefficients in an indeterminate quantity $\widehat{\lam}$
that satisfies the identity $\widehat{\lam}^{m-1}=0$. (Equivalently
one may start by disregarding all terms in the identities that
involve $\lam^r$ with $r\geq m-1$.) The identities then simplify to
\[
\widehat{u}_{m+1} = -1+p,\hspace{2em}
\widehat{v}_{m}\widehat{u}_{m+1}= 1,
\]
where
\[
p(\widehat{\lam})=\widehat{\lam}^{m/2}\sum_{r=0}^{m/4} \alp_r
\widehat{\lam}^{2r}
\]
satisfies $p^{2}=0$ in $\Z(\widehat{\lam})$. The second equation can
be solved for $\widehat{v}_m$, yielding
\[
\widehat{v}_{m}=-1-p
\]
and hence $\widehat{\tau}=-2$. Returning to the original variable
$\lam$ one deduces that
\[
\tau=-2+\sum_{r\geq m-1} \bet_r \lam^r.
\]
But (cf.\ Lemma \ref{Ttau1}) it is easily shown that $v_n$ is an even polynomial of degree $n-2$ for odd $n$. Thus, and by Lemma \ref{Ttau1}, $\tau =v_m+u_{m+1}$ is an even polynomial of degree $m$ with leading
coefficient $1$, so $\tau=\lam^m-2$.
\end{proof}

\begin{lemma}\label{Tdet2} Following the assumptions and notation of Lemma~\ref{Tdet1}, suppose that $\del=1$ and that there exist $m\in\Z_+$ and $\mu\in\C$ such that $\tau=\mu^m-2$. Then there exists a constant $b$ such that
\[
\norm T^r\norm\leq b\, 4^r \max(|\mu|^{rm},1)
\]
for all $r\geq 1$.
\end{lemma}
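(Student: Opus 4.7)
The plan is to reduce to Lemma \ref{Tdet1} after obtaining a sharp enough bound on the larger eigenvalue modulus $\gam$. Setting $M = \max(|\mu|^m, 1)$, the target inequality becomes $\norm T^r\norm \leq b\,(4M)^r$, and the key claim I would establish first is that $\gam \leq 4M$.

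For this eigenvalue estimate, I would argue as follows. The eigenvalues $z_\pm$ of $T$ are the roots of $z^2 - \tau z + 1 = 0$, so $z^2 = \tau z - 1$ and the triangle inequality gives $|z|^2 \leq |\tau|\,|z| + 1$. Solving the resulting quadratic inequality yields
\[
|z| \leq \frac{|\tau| + \sqrt{|\tau|^2 + 4}}{2} \leq |\tau| + 1,
\]
where the last step uses $\sqrt{|\tau|^2+4} \leq |\tau|+2$. Since $|\tau| = |\mu^m - 2| \leq |\mu|^m + 2$, this gives $\gam \leq |\mu|^m + 3$. A short case split then shows $|\mu|^m + 3 \leq 4M$: if $|\mu|^m \geq 1$ then $|\mu|^m + 3 \leq 4|\mu|^m = 4M$, and if $|\mu|^m < 1$ then $|\mu|^m + 3 < 4 = 4M$.

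With $\gam \leq 4M$ in hand, I would then invoke Lemma \ref{Tdet1} according to which case of its dichotomy we fall into. In the generic case $\tau^2 \neq 4$ (equivalently, $\mu^m \notin \{0,4\}$), the first statement of Lemma \ref{Tdet1} directly gives
\[
\norm T^r \norm \leq b\,\gam^r \leq b\,(4M)^r = b\,4^r\max(|\mu|^{rm},1),
\]
as required. In the degenerate case $\tau^2 = 4$, we have either $\mu^m = 0$ or $\mu^m = 4$, so $\gam = 1$, and $M \geq 1$. Applying the second statement of Lemma \ref{Tdet1} with, for instance, $\eps = 3$, gives $\norm T^r\norm \leq b_\eps (\gam + 3)^r = b_\eps 4^r \leq b_\eps (4M)^r$, completing this case.

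The only genuine content of the argument is the eigenvalue estimate; once $\gam \leq 4M$ is established, Lemma \ref{Tdet1} does all the remaining work, with the factor $4$ in the target bound providing exactly enough slack to absorb the degenerate trace case via the $\eps$ in Lemma \ref{Tdet1}. No step should present a real obstacle.
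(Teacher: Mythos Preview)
Your proof is correct and follows essentially the same approach as the paper: bound the larger eigenvalue modulus by $4\max(|\mu|^m,1)$ and then invoke Lemma~\ref{Tdet1}. The paper organises the case split by $|\mu|\leq 1$ versus $|\mu|>1$ and bounds $|z_\pm|$ directly from the quadratic formula (obtaining strict inequality $\gamma<4M$, which absorbs the degenerate case implicitly), whereas you use the cleaner estimate $|z|\leq |\tau|+1$ and treat the degenerate case $\tau^2=4$ explicitly; these are cosmetic differences only.
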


\begin{proof}\\
\textbf{Case~1.} If $|\mu|\leq 1$ it suffices to obtain bounds on
the solutions $z_\pm$ of $z^2-\tau z+1=0$ when $|\tau|\leq 3$. The
solutions satisfy
\[
|z_\pm|= \left| \frac{\tau}{2}\pm \sqrt{\frac{\tau^2}{4} -1}\right|
\leq\frac{3}{2}+\frac{\sqrt{13}}{2} <4.
\]
Lemma~\ref{Tdet1} now implies that $\norm T^r\norm\leq b \, 4^r$ for
all $r\geq 1$.

\textbf{Case~2.} If $|\mu|> 1$ it suffices to obtain bounds on the
solutions $z_\pm$ of $z^2-\tau z+1=0$ when $|\tau|\leq 3|\mu|^m$.
The solutions satisfy
\[
|z_\pm|= \left| \frac{\tau}{2}\pm \sqrt{\frac{\tau^2}{4} -1}\right|
<4|\mu|^m.
\]
Lemma~\ref{Tdet1} now implies that $\norm T^r\norm\leq b \,
4^r|\mu|^{rm}$ for all $r\geq 1$.
\end{proof}

\begin{lemma}\label{fillin}
Let $X_n$, $n\in\Z$, be a periodic sequence of $2\times 2$ matrices
with period $m$ and let $T_r=X_rX_{r-1}\ldots X_1$ for all $r\geq
1$. If there exist constants $b_0,\, \gam$ such that $\norm
(T_{m})^s\norm \leq b_0 \gam^{s}$ for all $s\geq 0$, then there
exists a constant $b_2$ such that $\norm T_r\norm\leq b_2
\gam^{r/m}$ for all $r\geq 1$.
\end{lemma}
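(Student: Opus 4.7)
The plan is to reduce the general case to the hypothesis by Euclidean division. For each $r\geq 1$ write $r = sm+k$ with $s\in \Z_+$ and $0\leq k < m$, and split the product defining $T_r$ into an initial block of length $sm$ and a tail of length $k$:
\[
T_r \;=\; (X_{sm+k}X_{sm+k-1}\cdots X_{sm+1})\cdot (X_{sm}X_{sm-1}\cdots X_1).
\]
The second factor is precisely $T_{sm}$, and periodicity $X_{n+m}=X_n$ immediately yields $T_{sm}=(T_m)^s$, while the first factor is $X_kX_{k-1}\cdots X_1 = T_k$ (with the convention $T_0=I$).

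Hence $T_r = T_k(T_m)^s$, and taking norms gives
\[
\norm T_r\norm \;\leq\; \norm T_k\norm\,\norm (T_m)^s\norm \;\leq\; M\,b_0\,\gam^s,
\]
where $M := \max_{0\leq k<m}\norm T_k\norm$ is finite, by the hypothesis on $(T_m)^s$. Since $s=(r-k)/m$, we have $\gam^s = \gam^{r/m}\gam^{-k/m}$, and then
\[
\norm T_r\norm \;\leq\; M\,b_0\,\gam^{-k/m}\,\gam^{r/m}.
\]

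It remains only to absorb the factor $\gam^{-k/m}$ into a uniform constant. Since $0\leq k/m <1$, the quantity $\gam^{-k/m}$ is bounded by $\max(1,\gam^{-(m-1)/m})$, which depends only on $\gam$ and $m$. Setting
\[
b_2 \;=\; M\,b_0\,\max\bigl(1,\gam^{-(m-1)/m}\bigr)
\]
therefore gives $\norm T_r\norm \leq b_2\,\gam^{r/m}$ for every $r\geq 1$, as required. There is no real obstacle here; the only point to check is that the remainder $k$ ranges over a finite set so $\norm T_k\norm$ is uniformly bounded and $\gam^{-k/m}$ is uniformly bounded.
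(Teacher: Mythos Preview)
Your proof is correct and follows essentially the same route as the paper's: both write $r=sm+k$ by Euclidean division, use periodicity to get $T_{sm}=(T_m)^s$, bound the leftover tail $T_k$ by a maximum over $0\leq k<m$, and absorb constants. If anything, you are slightly more careful than the paper in explicitly bounding the factor $\gam^{-k/m}$ (the paper's argument tacitly uses $\gam\geq 1$, which holds in the application).
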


\begin{proof}
Every $r\in\Z_+$ may be written in the form $r=sm+v$ where $s\geq 0$
and $0\leq v<m$. Using the identity $T_{sm}=(T_m)^s$, one obtains
\begin{eqnarray*}
\norm T_r\norm&=& \norm X_rX_{r-1}\ldots X_{sm+1}T_{sm}\norm\\
&=& \norm X_vX_{v-1}\ldots X_1(T_{m})^s\norm\\
&\leq & \norm X_vX_{v-1}\ldots X_1\norm b_0\gam^{sm}\\
&\leq & b_0b_1 \gam^{s}\\
&\leq & b_2 \gam^{r/m},
\end{eqnarray*}
where $b_2=b_0b_1$ and
\[
b_1=\max_{0\leq v\leq m-1}\{  \norm X_vX_{v-1}\ldots X_1\norm \}.
\]
\end{proof}

\begin{theorem}\label{maintheorem6}
One has
\begin{equation}
\{\lam:|\lam|\leq 1\}\backslash H_\sig\subseteq
S_\sig\label{strongform}
\end{equation}
for all $\sig\in (0,1)$.
\end{theorem}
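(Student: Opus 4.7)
The plan is to adapt the structure of the first proof of Theorem~\ref{maintheorem3}, replacing the family $c^\pm = \sig c^{(n,\pm)}$ used there with a family of periodic sequences built from the self-similar sequence $\widetilde c$ studied throughout Section~\ref{secondproofsection}. For each integer $n\geq 2$, I would introduce the periodic sequence $d^{(n)}\in \Ome_\sig$ of period $2^n$ defined by $d^{(n)}_r = \sig\widetilde c_r$ for $1\leq r\leq 2^n$, and then deploy the paired periodic operator machinery of Lemma~\ref{lem_two_periods}.

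The first substantive step is to compute the trace and determinant of the period-$2^n$ transfer matrix of $d^{(n)}$. A diagonal similarity by $D = \mathrm{diag}(1,\sqrt\sig)$, together with a scalar factor $\sqrt\sig$, relates this scaled transfer matrix to the unscaled matrix $T_{2^n,\lam/\sqrt\sig}$, so combining with $\tr(T_{2^n,\mu}) = \mu^{2^n}-2$ (Lemma~\ref{Ttau2}) and $\det(T_{2^n,\mu}) = 1$ (Lemma~\ref{Tdet3}) should give $\tau = \lam^{2^n} - 2\sig^{2^{n-1}}$ and $\gam = \sig^{2^n}$. By Lemmas~\ref{BIO} (case~2) and \ref{quadraticbounds}, it will follow that
\[
I_{d^{(n)}} = \{\lam \in \C : \Phi(\lam^{2^n}-2\sig^{2^{n-1}},\sig^{2^n})<1\},
\]
with $\Phi$ as in \eqref{Phidef}; the auxiliary transfer-matrix bounds of Lemmas~\ref{Tdet1}--\ref{fillin} supply the associated quantitative asymptotic information about the decay of solutions of the recurrence referred to in the opening of the section.

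Next, for any fixed $r\in (0,1)$, both $|\lam|^{2^n}$ and $\sig^{2^{n-1}}$ tend to zero uniformly on $\{|\lam|\leq r\}$ as $n\to \infty$, so $\Phi\to 0$ uniformly and hence $\{|\lam|\leq r\}\subseteq I_{d^{(n)}}$ for all sufficiently large $n$. For each $\tau\in\{\sig,-\sig\}$, I would then form the paired periodic sequence $c^*\in \Ome_\sig$ with $c^*_m = d^{(n)}_m$ for $m\geq 0$ and $c^*_m=\tau$ for $m<0$. Lemma~\ref{lem_two_periods} gives $\Spec(A_{c^*})\supseteq \overline{I_{d^{(n)}}}\setminus E_\tau \supseteq \{|\lam|\leq r\}\setminus E_\tau$, and Proposition~\ref{quasiergodic} yields $\Spec(A_{c^*})\subseteq S_\sig$. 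Taking the union over the two choices of $\tau$ produces $S_\sig \supseteq \{|\lam|\leq r\}\setminus H_\sig$ for every $r\in(0,1)$. Since $H_\sig$ lies strictly inside the open unit disc (by \eqref{Hsigsize}) and $S_\sig$ is closed, letting $r\nearrow 1$ then establishes \eqref{strongform}.

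The main technical obstacle will be the precise derivation of the closed-form expressions for $\tau$ and $\gam$ of the scaled transfer matrix, requiring careful bookkeeping of the similarity $D$ and the scalar factor $\sqrt\sig$ that intervene between the scaled sequence $d^{(n)}$ and the unscaled trace identity of Lemma~\ref{Ttau2}. The remaining steps are essentially a formal adaptation of the pattern of the first proof, with Lemma~\ref{Ttau2} now playing the role that the explicit spectral formulae of Lemma~\ref{lem_sf} played in the first proof.
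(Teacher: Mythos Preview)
Your argument is correct and uses the same periodic sequences $d^{(n)}$ (the paper's $c$ with $c_n=\sig\widetilde c_n$, period $m=2^n$), the same key identities from Lemmas~\ref{Tdet3} and~\ref{Ttau2}, and the same finishing move via Lemma~\ref{lem_two_periods} and Proposition~\ref{quasiergodic}. The difference is in how you show $\{|\lam|\leq r\}\subset I_{d^{(n)}}$. The paper never computes $\tau$ and $\gam$ for the scaled transfer matrix; instead it makes the substitution $x_n=\sig^{-n/2}\xi_n$, $\mu=\sig^{-1/2}\lam$ at the level of the recurrence, then uses the crude norm bounds of Lemmas~\ref{Tdet1}, \ref{Tdet2} and~\ref{fillin} to prove directly that $|\xi_r|\leq b_3(\phi/h)^r$ with $\phi=\max(\sig^{1/2},|\lam|)$ and $h=4^{-1/m}$, giving exponential decay when $|\lam|<h$. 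You instead carry the similarity (essentially the same substitution) through at the matrix level, extract $\tau=\lam^{2^n}-2\sig^{2^{n-1}}$ and $\gam=\sig^{2^n}$, and invoke the $\Phi$-criterion of Lemma~\ref{quadraticbounds} via Lemma~\ref{BIO}. Your route is cleaner in that Lemmas~\ref{Tdet1}, \ref{Tdet2} and~\ref{fillin} become genuinely unnecessary (your parenthetical nod to them is not actually used), and you obtain an exact description of $I_{d^{(n)}}$ rather than just an inner disc. What you lose is precisely the thing the paper advertises as the point of its second proof: the explicit quantitative decay rate $(\phi/h)^r$ for solutions of the recurrence. In that sense your version collapses the second proof back into a variant of the first, with Lemma~\ref{Ttau2} replacing Lemma~\ref{lem_sf}.
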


\begin{proof} Given $\sig\in (0,1)$ we put $m=2^d$ where $d\in \N$ is
large enough to yield
\begin{equation}
\sig^{1/2} < h=4^{-1/m}.\label{criticalmbound}
\end{equation}
We use the identities
\[
\del=\det(T_{m,\mu})=1
\]
and
\[
\tau=\tr(T_{m,\mu})=\mu^{m}-2
\]
proved in Lemmas~\ref{Tdet3} and \ref{Ttau2} and valid for all
$\mu\in\C$. Let $c\in\Ome_\sig$ be the periodic sequence with period
$m$ such that $c_n=\sig \widetilde{c}_n$ for all $1\leq n\leq m$.
The main task is to prove that if $|\lam|<h$ then all
solutions $\xi:\Z\to\C$ of
\begin{equation}
\xi_{n+1}=\lam \xi_n-c_n\xi_{n-1}\label{xiiterate}
\end{equation}
decay exponentially as $n\to+\infty$. This will imply, by Lemma \ref{BIO},
and using the notations of that lemma, that
$$
I_c \supset \{\lam:|\lam| < h\}.
$$
Arguing as in the proof of Theorem \ref{maintheorem3}, it will then follow from Lemma \ref{lem_two_periods} and Proposition \ref{quasiergodic}
that
$$
S_\sig \supset \{\lam:|\lam| \leq h\} \setminus H_\sig ,
$$
this holding for any $h=4^{-1/m}$ such that \eqref{criticalmbound} holds and $m=2^d$, so that
$$
S_\sig \supset \{\lam:|\lam| < 1\} \setminus H_\sig .
$$
Since $S_\sig$ is closed, \eqref{strongform} will follow.

Thus it remains only to show that all solutions of \eqref{xiiterate} decay exponentially at $+\infty$.
To see that this holds,
define $x_n=\sig^{-n/2}\xi_n$ and $\mu=\sig^{-1/2}\lam$ so that (\ref{xiiterate}) may be rewritten in the form
\[
x_{n+1}=\mu x_n-\widetilde{c}_nx_{n-1}
\]
for $1\leq n\leq m$. Where $\theta=\max(1,|\mu|)$, Lemma~\ref{Tdet2} now yields
\[
\norm (T_{m,\mu})^r\norm \leq b\, 4^r\theta^{rm}
\]
for all $r\in \N$. Lemma~\ref{fillin} with $\gam=4\theta^m$ implies
\[
\norm T_{r,\mu}\norm \leq b\, 4^{r/m}\theta^r,
\]
and hence
\[
|x_r|\leq b_3\, 4^{r/m}\theta^r,
\]
again for all $r\in\N$. Hence, where $\phi = \max(\sig^{1/2},|\lam|)$,
\[
|\xi_r|\leq b_3\, 4^{r/m}\theta^r\sig^{r/2}=b_3 \left(
\phi h^{-1}\right)^r
\]
for all $r\in \N$. Since $0<\phi< h$, it follows that
$\xi$ decays exponentially.
\end{proof}

\section{Semi-infinite  and finite matrices}
All our results so far have focused on calculations of the spectrum of the bi-infinite matrix $A_c$. In this final section we say something about the spectrum of the semi-infinite matrix
\[
A^+_c= \left(\begin{array}{cccc}
0&1&&\\
c_1&0&1&\\
& c_{2}&0&\ddots\\
&&\ddots&\ddots
\end{array}\right)
\]
in the case that $c=(c_1,c_2,...)\in \{\pm \sigma\}^\N$ is pseudo-ergodic (contains every finite sequence of $\pm \sigma$'s as a consecutive sequence). We also say something (though have mainly unanswered questions) about the finite $N\times N$ matrices
\begin{eqnarray*}
A^{(N)}_c&=& \left(\begin{array}{ccccc}
0&1&&\\
c_1&0&1&\\
&c_{2}&0&\ddots\\
&&\ddots&\ddots& 1\\
&&& c_{N-1} & 0
\end{array}\right)
\end{eqnarray*}
and
\begin{eqnarray}
A^{(N,\mathrm{per})}_{c,\alpha}&=& \left(\begin{array}{ccccc}
0&1&&& \alpha c_N\\
c_1&0&1&\\
&c_{2}&0&\ddots\\
&&\ddots&\ddots& 1\\
\alpha^{-1} &&& c_{N-1} & 0
\end{array}\right).\label{finitematrix2}
\end{eqnarray}
Here $A^{(N)}_c$ is tridiagonal, $A^{(N,\mathrm{per})}_c$ is tridiagonal except for ``periodising'' entries in row $1$ column $N$ and row $N$ column $1$ (in these entries we assume that $|\alpha|=1$), and each $c_j = \pm \sig$: we have in mind particularly the random case where the $c_j$'s are independent and identically distributed random variables taking the values $\pm \sigma$.

Our main result on the spectrum of $A_c$, proved in the previous sections, is that it contains the set $\{\lambda:|\lambda| \leq 1\}\setminus H_\sigma$. We suspect that $H_\sigma$ is a genuine hole in the spectrum for $0<\sigma<1$, i.e.\ that $H_\sigma\cap S_\sigma = \emptyset$. We have not shown this result but have shown in Lemma \ref{lemmaA} the weaker result that $\{\lambda:|\lambda|< 1-\sigma\}\cap S_\sigma = \emptyset$. Our first result in this section is that this hole is not present in the spectrum of the semi-infinite matrix. The proof depends on recent results on semi-infinite pseudo-ergodic operators due to Lindner and Roch \cite{LR10}, derived using characterisations of the index of Fredholm operators, whose matrix representations are banded semi-infinite matrices, in terms of so-called ``plus indices'' of limit operators, these characterisations derived using $K$-theory results for $C^*$-algebras in \cite{RRJ}.

\begin{theorem} \label{thm:specsemi}
Suppose $c\in \{\pm \sigma\}^{\N}$ is pseudo-ergodic. If $\sigma=1$ then $\Spec(A_c^+) = S$. For all $\sigma\in (0,1]$, $\{\lambda:|\lambda|\leq 1\}\subset \Spec( A_c^+)$.
\end{theorem}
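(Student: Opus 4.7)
The plan is to combine two ingredients: the identification of $\essSpec(A_c^+)$ via limit operator theory, and the Lindner--Roch \cite{LR10} index calculus for pseudo-ergodic semi-infinite band operators.

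First, because $c\in\{\pm\sig\}^\N$ is pseudo-ergodic, every $b\in\Ome_\sig$ arises as the strong limit of a sequence of shifts of $c$ to $+\infty$. Standard band-dominated operator theory (see \cite{CWL,Li,LR10}) then yields
$$
\essSpec(A_c^+)\;=\;\bigcup_{b\in\Ome_\sig}\Spec(A_b)\;=\;S_\sig,
$$
so $S_\sig\subset\Spec(A_c^+)$. Combined with Theorems~\ref{maintheorem} and \ref{maintheorem3} this already shows that $\{|\lam|\leq 1\}\setminus H_\sig\subset\Spec(A_c^+)$ for every $\sig\in(0,1]$.

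For $\sig=1$, where $H_1$ is (degenerately) empty, it remains to prove $\Spec(A_c^+)\subset S$. Fix $\lam\notin S$. Then no limit operator of $A_c^+$ has $\lam$ in its spectrum, so $A_c^+-\lam I$ is Fredholm and injective, and the only thing left is to rule out a non-zero Fredholm index. The plus-index formula of \cite{LR10}, specialized to pseudo-ergodic hopping-sign operators, expresses this index in terms of computable topological data attached to the limit operators; in the case $\sig=1$ this forces the index to vanish on the single unbounded component of $\C\setminus S$ (the only component, since $S$ contains the closed unit disc), and hence $A_c^+-\lam I$ is invertible.

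For $\sig\in(0,1)$ we still need $H_\sig\subset\Spec(A_c^+)$. If $\lam\in H_\sig\cap S_\sig$ there is nothing to do, so suppose $\lam\in H_\sig\setminus S_\sig$; then $A_c^+-\lam I$ is Fredholm and it suffices to show it is not invertible. The anchor is an explicit calculation at $\lam=0$: the equation $A_c^+f=0$ reads $f_2=0$ and $f_{n+1}=-c_{n-1}f_{n-1}$ for $n\geq 2$, so setting $f_1=1$ yields $f_{2k}=0$ and $f_{2k+1}=(-1)^k c_1 c_3\cdots c_{2k-1}$ with $|f_{2k+1}|=\sig^{k}$, which is square-summable because $\sig<1$. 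Hence $0$ is an eigenvalue of $A_c^+$ and the Fredholm index at $\lam=0$ is strictly positive. One then invokes the Lindner--Roch formula again, combined with the explicit periodic data of Section~\ref{periodic_ops} (in particular the fact that the hole sits inside the interior region $I_c$ of the periodic approximants $c=\sig c^{(m,\pm)}$ for large $m$), to conclude that the Fredholm index remains strictly positive throughout $H_\sig\setminus S_\sig$. The hard part will be this last propagation. Were it known that $H_\sig\cap S_\sig=\emptyset$ (the conjectured picture, consistent with our numerics), then $H_\sig$ would be an open, connected subset of the Fredholm domain containing $0$, and local constancy of the Fredholm index would settle matters immediately. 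Without this hypothesis one genuinely needs the index calculus of \cite{LR10}: one must match the plus-indices of the limit operators to the periodic approximants of Section~\ref{periodic_ops} and then check that the resulting integer stays positive on every connected component of $H_\sig\setminus S_\sig$, which is the main technical content of the argument.
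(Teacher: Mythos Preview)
Your overall architecture matches the paper's: identify $\essSpec(A_c^+)=S_\sig$ via limit operator theory, then use the Lindner--Roch index results for the remainder. But your execution diverges from the paper in two places, and in both there are gaps.

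For $\sig=1$, the paper does not attempt the argument you sketch; it simply cites \cite{CCL2} for $\Spec(A_c^+)=S$. Your direct argument has a hole: you assert that for $\lam\notin S$ the operator $A_c^+-\lam I$ is ``Fredholm and injective'', but injectivity does not follow from the limit-operator Fredholm criterion, and you give no separate justification. Without injectivity, knowing the index is zero on the unbounded component of $\C\setminus S$ does not give invertibility. (One can in fact prove injectivity here by a transfer-matrix growth argument, but that is real work you have not done.)

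For $H_\sig\subset\Spec(A_c^+)$ when $\sig\in(0,1)$, the paper's route is much shorter than yours: it observes that $H_\sig$ is exactly the set denoted $E_-(U,W)$ in \cite[Theorem~2.4]{LR10}, and that theorem says directly that for every $\lam\in H_\sig$ the operator $A_c^+-\lam I$ is either not Fredholm or Fredholm of index~$1$; either way $\lam\in\Spec(A_c^+)$. No anchoring at $\lam=0$, no propagation across components, no appeal to the periodic approximants of Section~\ref{periodic_ops} is needed. Your anchor-and-propagate strategy is not wrong in spirit, but it has its own gap: exhibiting a one-dimensional kernel at $\lam=0$ does not by itself show the index is strictly positive---you would also need $\dim\mathrm{coker}=0$, which you have not argued. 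And the ``hard part'' you flag (propagating positivity of the index across all components of $H_\sig\setminus S_\sig$) is precisely what the direct application of \cite[Theorem~2.4]{LR10} renders unnecessary.
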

\begin{proof} In the case that $\sigma=1$ it is shown in \cite{CCL2} that $\Spec(A_c^+)=S$. Thus, for $\sigma=1$,
$$
\{\lambda:|\lambda|\leq 1\}\subset S= \Spec(A_c^+)
$$
follows from Theorem \ref{maintheorem} (or \cite[Theorem 2.3]{CCL}). %For $\sigma=0$, $A_c$ is just the shift operator and $A_c^+$  is a Toeplitz operator with spectrum $\Spec(A_c^+)=\{\lambda:|\lambda|\leq 1\}$.
For all $\sigma\in (0,1]$ it follows from \cite[Theorem 2.1]{LR10} that the essential spectrum of $A_c^+$, i.e.\ the set of $\lambda\in\C$ for which $A_c^+-\lambda I^+$ is not Fredholm (here $I^+$ is the identity operator on $\ell^2(\N)$), is the set $S_\sigma$. Thus and by Theorem \ref{maintheorem3},
$$
(\{\lambda:|\lambda| \leq 1\}\setminus H_\sigma)\subset S_\sigma \subset \Spec(A_c^+).
$$
It remains to show that $H_\sigma\subset \Spec(A_c^+)$. But, applying \cite[Theorem 2.4]{LR10} (note that the set $E_-(U,W)$ in the notation of \cite[Theorem 2.4]{LR10} is precisely the set $H_\sigma$ for this operator), it follows  that, for $\lambda \in H_\sigma$, either $A_c^+-\lambda I^+$ is not Fredholm or $A_c^+-\lambda I^+$ is Fredholm with index 1: in either of these cases $\lambda\in \Spec(A_c^+)$.
\end{proof}

Our other result in this section is to say something about the spectra (sets of eigenvalues) of the finite matrices $A^{(N)}_c$ and $A^{(N,\mathrm{per})}_{c,\alpha}$. The notations $\pi_{N,\sig}$ and $\pi_{\infty,\sig}$ are as defined in and above equation \eqref{eq:piinf} (and  $\pi_N$ and $\pi_\infty$ are our abbreviations for $\sigma=1$).

\begin{theorem} \label{thmfinite} If $0<\sig\leq 1$, $|\alpha|=1$ and $c\in \{\pm\sig\}^N$, then
$$
\Spec(A^{(N,\mathrm{per})}_{c,\alpha}) \subset \pi_{N,\sig}\subset \pi_{\infty,\sig}\subset S_\sig
$$
while
$$
\Spec(A^{(N)}_c)\subset \sqrt{\sig} \pi_{2N+2}\subset \sqrt{\sig} \pi_\infty\subset \sqrt{\sig}S.
$$
If $\lambda = x+\ri y$ is an eigenvalue of $A^{(N,\mathrm{per})}_{c,\alpha}$ then $1-\sig\leq |\lambda|\leq 1+\sig$ and $|x|+|y|\leq \sqrt{2(1+\sig^2)\,}$, while if $\lambda$ is an eigenvalue of $A^{(N)}_c$ then $|x|+|y|\leq 2\sqrt{\sig}$.
\end{theorem}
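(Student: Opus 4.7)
The inclusions $\Spec(A^{(N,\mathrm{per})}_{c,\alpha})\subset \pi_{N,\sig}\subset \pi_{\infty,\sig}\subset S_\sig$ are essentially by definition: if $\tilde c\in \Ome_\sig$ is the period-$N$ extension of $c$, the Bloch decomposition \eqref{spectrumperiodic} gives $\Spec(A_{\tilde c})=\bigcup_{|\alpha|=1}\Spec(A^{(N,\mathrm{per})}_{c,\alpha})$, whence $\Spec(A^{(N,\mathrm{per})}_{c,\alpha})\subset \Spec(A_{\tilde c})\subset \pi_{N,\sig}$; the remaining inclusions are \eqref{eq:piinf} and \eqref{eq:pidef}.

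For the Dirichlet case I first reduce to $\sig=1$: the diagonal similarity $D=\mathrm{diag}(\sig^{n/2})_{1\leq n\leq N}$ satisfies $D^{-1}A^{(N)}_cD=\sqrt{\sig}\,A^{(N)}_{c/\sig}$ with $c/\sig\in \{\pm 1\}^N$, so $\Spec(A^{(N)}_c)=\sqrt{\sig}\,\Spec(A^{(N)}_{c/\sig})$. It is therefore enough to show that every eigenvalue $\lam$ of $A^{(N)}_{c}$ with $c\in \{\pm 1\}^N$ lies in $\pi_{2N+2}$. Fix such a $\lam$ with eigenvector $(f_1,\ldots,f_N)$ and convention $f_0=f_{N+1}=0$ (a trivial propagation argument shows $f_1,f_N\neq 0$); I aim to produce a sequence $c^*\in \{\pm 1\}^\Z$ of period $2N+2$ and a bounded bi-infinite solution $f^*$ of $c^*_{n-1}f^*_{n-1}+f^*_{n+1}=\lam f^*_n$ for all $n\in\Z$, for by Lemma \ref{BIO} this gives $\lam\in \Spec(A_{c^*})\subset \pi_{2N+2}$.

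The construction is a twisted reflection about the Dirichlet node at $n=N+1$: I set $c^*_n=c_n$ for $n=1,\ldots,N-1$ and $c^*_{N+k}=c_{N-k+1}$ for $k=2,\ldots,N$, leaving $c^*_N$, $c^*_{N+1}$, $c^*_{2N+1}$, $c^*_{2N+2}$ to be specified below; on the function side I set $f^*_{N+1+j}=\rho_j f_{N+1-j}$ for $j=0,1,\ldots,N+1$ and extend $f^*$ periodically with period $2N+2$, so $f^*_{N+1}=f^*_{2N+2}=0$ automatically. The factors are $\rho_0=1$, $\rho_1=-c^*_N$ (a free sign choice), and $\rho_j=c_{N-j+1}\rho_{j-1}$ for $j\geq 2$, so every $\rho_j\in \{\pm 1\}$. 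An induction on $j$ combining the forward recurrence for $f^*$ with the original recurrence for $f$ at position $N+1-j$ will confirm self-consistency: because $\rho_j=c^*_{N+j}\rho_{j-1}$ by the choice $c^*_{N+j}=c_{N-j+1}$, the coefficient of the potentially-zero term $f_{N+1-j}$ drops out, so the argument goes through even when some interior $f_{N+1-j}$ vanishes. The positions $c^*_{N+1}$ and $c^*_{2N+2}$ are free sign choices because $f^*_{N+1}=f^*_{2N+2}=0$ removes them from the only recurrences in which they appear, and the closure at $n=2N+2$ forces $c^*_{2N+1}=-\rho_N$, which lies in $\{\pm 1\}$ since $\rho_N$ is a product of signs.

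The eigenvalue bounds then follow by combining the inclusions above with Lemma \ref{lemmaA}: applied directly, that lemma gives $1-\sig\leq |\lam|\leq 1+\sig$ and $|x|+|y|\leq \sqrt{2(1+\sig^2)}$ for $\lam\in S_\sig$, hence for eigenvalues of $A^{(N,\mathrm{per})}_{c,\alpha}$; specialised to $\sig=1$ it gives $S\subset \{|x|+|y|\leq 2\}$, which after scaling by $\sqrt{\sig}$ yields $\sqrt{\sig}S\subset \{|x|+|y|\leq 2\sqrt{\sig}\}$, and hence $|x|+|y|\leq 2\sqrt{\sig}$ for eigenvalues of $A^{(N)}_c$. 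The hardest part will be the inductive verification that the reflected sequence $f^*_{N+1+j}=\rho_j f_{N+1-j}$ is genuinely produced by forward propagation under the chosen $c^*$, especially in the degenerate case when an interior entry $f_{N+1-j}$ vanishes; the remaining points (verifying $c^*_{2N+1}\in \{\pm 1\}$ and the closure identity at $n=2N+2$) reduce to straightforward sign bookkeeping once the reflection pattern $c^*_{N+j}=c_{N-j+1}$ is in place.
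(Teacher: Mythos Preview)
Your proposal is correct and, for the first chain of inclusions and the eigenvalue bounds, matches the paper's argument essentially verbatim: the periodic case follows from the Bloch decomposition \eqref{spectrumperiodic} and the definition of $\pi_{N,\sig}$, the diagonal similarity $D_N^{-1}A^{(N)}_cD_N=\sqrt{\sig}\,A^{(N)}_{c/\sig}$ reduces the Dirichlet case to $\sig=1$, and the numerical bounds are read off from Lemma~\ref{lemmaA}.

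The one place where you diverge is the inclusion $\Spec(A^{(N)}_c)\subset \pi_{2N+2}$ for $\sig=1$. The paper does not prove this but simply cites \cite[Theorem~4.1]{CCL2}. You instead supply the natural reflection construction: extend the Dirichlet eigenvector across the node at $n=N+1$ with sign twists $\rho_j$ chosen so that the reflected recurrence matches the original one, and define $c^*$ on the reflected block by $c^*_{N+k}=c_{N-k+1}$, with the four ``free'' entries absorbed by the zeros of $f^*$ or fixed by the closure condition $c^*_{2N+1}=-\rho_N$. Your outline checks out: with $\rho_j=c_{N-j+1}\rho_{j-1}$ one has $c^*_{N+j}\rho_{j-1}=\rho_j$ and $\rho_{j+1}=c_{N-j}\rho_j$, so at $n=N+1+j$ the reflected recurrence is exactly $\rho_j$ times the original recurrence at $n=N+1-j$; in particular nothing is divided by a potentially vanishing $f_{N+1-j}$. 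This is precisely the argument behind the cited result in \cite{CCL2}, so you have effectively reproved that theorem rather than invoked it. Two small points to tidy: your recurrence is written as $c^*_{n-1}f^*_{n-1}+f^*_{n+1}=\lambda f^*_n$, which differs from the paper's convention $(A_cf)_n=c_nf_{n-1}+f_{n+1}$ by a shift in the index of $c$ (harmless for the spectrum, but worth aligning); and Lemma~\ref{BIO} is stated in Section~\ref{periodic_ops} under the standing assumption $\sig\in(0,1)$, so for $\sig=1$ you should instead appeal directly to the standard fact that a bounded solution of $A_cf=\lambda f$ for periodic $c$ forces $\lambda\in\Spec(A_c)$.
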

\begin{proof} The first of these statements is clear from the definition of $\pi_{N,\sig}$, \eqref{spectrumperiodic}, and Proposition \ref{quasiergodic} which gives that $\pi_{\infty,\sig}\subset S_\sig$. The second of these statements is shown for $\sig=1$ in \cite[Theorem 4.1]{CCL2}. The second statement follows for $0<\sig<1$ by the observation that, where $d\in \{\pm 1\}^N$, $c=\sig d\in \{\pm \sig\}^N$, and $D_N$ is the diagonal matrix with leading diagonal $(1, \sig^{1/2},\sig,...,\sig^{(N-1)/2})$, it holds that
$
D_N^{-1} A^{(N)}_c D_N = \sqrt{\sig}\,A^{(N)}_d.
$
The last sentence then follows from Lemma \ref{lemmaA}.
\end{proof}

\noindent Note that in the last sentence of the above theorem the condition $|x|+|y|\leq 2\sqrt{\sig}$ implies both that $|\lambda|\leq 1+\sig$ and that $|x|+|y|\leq \sqrt{2(1+\sig^2)\,}$.

\noindent
%%%%%%%%%%%%%%%%%%%%%%%%%%%%%%%%%%%%%%%%%%%%%%%%%%%%%%%%%%%%%%%%%%%%%%%%%%%%
\ifpics{  %%%%% 30 png pictures (for PDFLatex only) %
\begin{center}
\begin{tabular}{cc}
\includegraphics[width=0.5\textwidth]{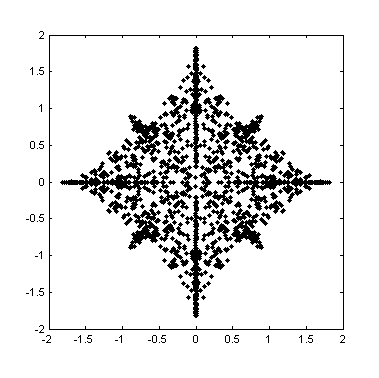}&
\includegraphics[width=0.5\textwidth]{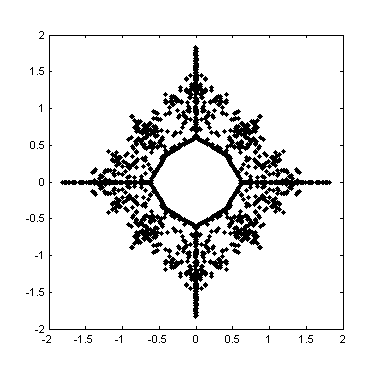}
\end{tabular}
\end{center}

\vspace*{-7ex}

}\fi %%%%% 30 png pictures (for PDFLatex only) %

%%%%%%%%%%%%%%%%%%%%%%%%%%%%%%%%%%%%%%%%%%%%%%%%%%%%%%%%%%%%%%%%%%%%%%%%%%%%
\begin{figure}[h]
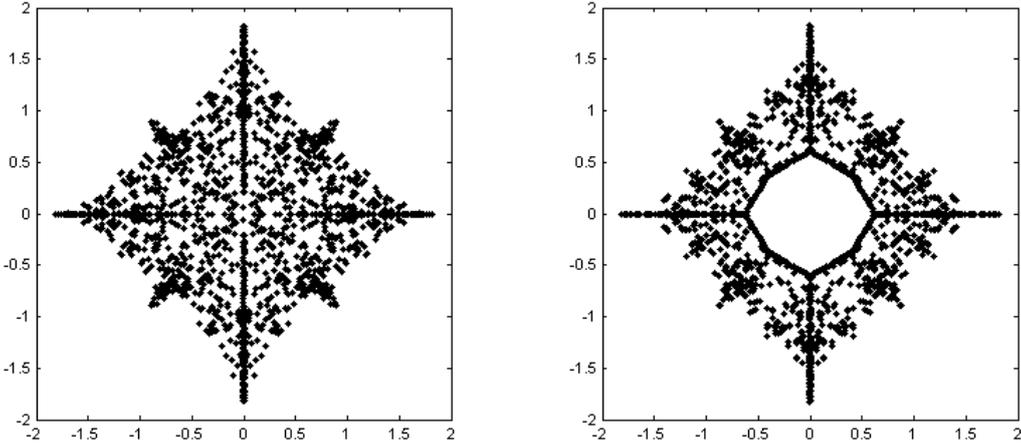

\caption{\small Plots of $\Spec(A_c^{(N)})$ (left) and $\Spec(A^{(N,\mathrm{per})}_{c,\alpha})$ (right) for a case when $N=2000$, $\sig = 0.9025$, $\alpha=1$, and the entries of the vector $c=(c_1,...,c_N)$ are independent and identically distributed with $\mathrm{Pr}(c_j = \pm\sigma)=0.5$ for each $j$ (the same vector $c$ is used in the two plots).}
 \label{fig:rand2000_09025}
\end{figure}

In Figure \ref{fig:rand2000_09025} we plot the spectra of $A_c^{(N)}$ and $A^{(N,\mathrm{per})}_{c,\alpha}$ for $N=2000$ and $\alpha=1$ for a typical realisation with the entries $c\in \{\pm \sig\}^N$ randomly chosen with the $c_j$ independently and identically distributed with $\mathrm{Pr}(c_j = \sig)=0.5$ and $\sig = 0.9025$ so that $\sqrt{\sig}=0.95$ (the several other realisations we have computed are very close in appearance to these plots). Theorem \ref{thmfinite} tells us that $\Spec(A_c^{(N)})\subset 0.95 \pi_\infty \subset 0.95 S$ and that $\Spec(A^{(N,\mathrm{per})}_{c,\alpha})\subset S_{0.9025}$, and that if $\lambda=x+\ri y$ is an eigenvalue of $A^{(N)}_c$ then $|x|+|y|\leq 1.9$, while if $\lambda$ is an eigenvalue of $A^{(N,\mathrm{per})}_{c,\alpha}$ then $0.075\leq |\lambda|\leq 1.9025$ and $|x|+|y|\leq \sqrt{2(1+\sig^2)\,}\approx1.905$.

It is clear from Figure \ref{fig:rand2000_09025} that Theorem \ref{thmfinite} is only the beginning of the story. We observe in the figure a hole in the spectrum of $A^{(N,\mathrm{per})}_{c,\alpha}$, but it is a hole of radius approximately 0.6, not 0.075,  with a large proportion of the eigenvalues positioned on the boundary of this hole, while outside the hole the spectra of $A^{(N,\mathrm{per})}_{c,\alpha}$ and $A^{(N)}_c$ appear near identical. The same qualitative behaviour is visible in Figure \ref{fig:rand2000_05}, which is a similar plot except that $\sig$ is reduced to 0.5 and we change the probability distribution, making it twice as likely that the entries of the vector $c$ are $-\sig$ rather than $\sig$. This change of probability distribution introduces an asymmetry, in particular an asymmetry in the hole in the spectrum (if we instead compute with $\mathrm{Pr}(c_j = \sigma)=1/2$ then typical realisations have spectra which are approximately invariant under the dihedral symmetry group $D_2$ of the square). Of course our methods, which are not probabilistic, have nothing to say about such asymmetries, indeed nothing, beyond Theorem \ref{thmfinite}, to say about the almost sure spectra of $A_c^{(N)}$ or $A^{(N,\mathrm{per})}_{c,\alpha}$ as $N\to\infty$.

\noindent
%%%%%%%%%%%%%%%%%%%%%%%%%%%%%%%%%%%%%%%%%%%%%%%%%%%%%%%%%%%%%%%%%%%%%%%%%%%%
\ifpics{  %%%%% 30 png pictures (for PDFLatex only) %
\begin{center}
\begin{tabular}{cc}
\includegraphics[width=0.5\textwidth]{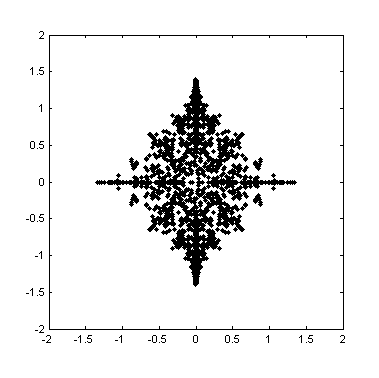}&
\includegraphics[width=0.5\textwidth]{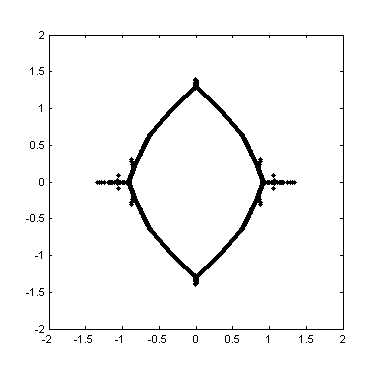}
\end{tabular}
\end{center}

\vspace*{-7ex}

}\fi %%%%% 30 png pictures (for PDFLatex only) %
%%%%%%%%%%%%%%%%%%%%%%%%%%%%%%%%%%%%%%%%%%%%%%%%%%%%%%%%%%%%%%%%%%%%%%%%%%%%
\begin{figure}[h]
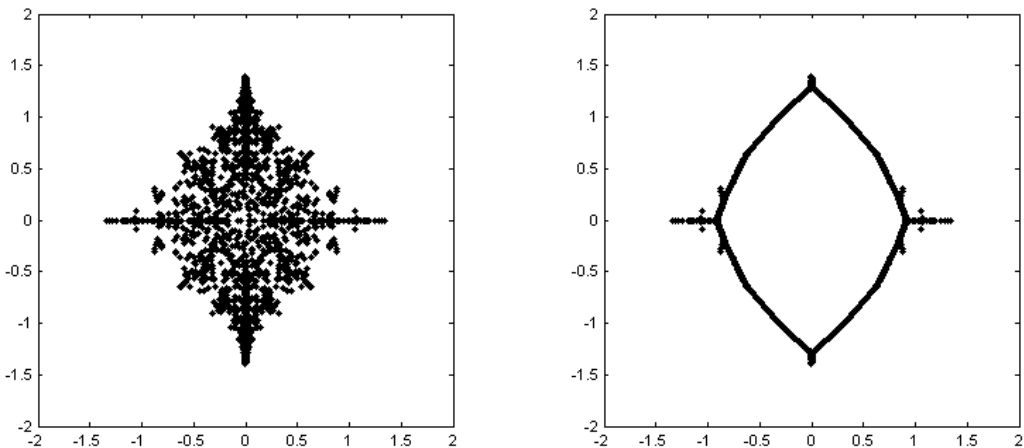

\caption{\small Plots of $\Spec(A_c^{(N)})$ (left) and $\Spec(A^{(N,\mathrm{per})}_{c,\alpha})$ (right) for a case when $N=2000$, $\sig = 0.5$ and the entries of the vector $c=(c_1,...,c_N)$ are independent and identically distributed with $\mathrm{Pr}(c_j = \sigma)=1/3$.}
\label{fig:rand2000_05}
\end{figure}

%\newpage

\parbox{3.5in}{S N Chandler-Wilde,\\
Dept. of Mathematics and Statistics,\\
University of Reading,\\
Berkshire, RG6 6BB,\\
UK}
\parbox{3.5in}{E B Davies,\\
Dept. of Mathematics,\\
King's College London,\\
Strand,\\
London,WC2R 2LS,\\
UK}


\begin{thebibliography}{99}

%\bibitem{B1} A. B\"ottcher, Pseudospectra and singular values of large convolution operators, J. Int. Eqns. Appl. 6 (1994)
%267-301.

\bibitem{B2} A. B\"ottcher, Infinite matrices and projection
methods, pp. 2-74 in ``Lectures on Operator Theory and its
Applications'', Fields Institute Monographs, ed. Peter Lancaster.
Amer. Math. Soc. Publ., Providence, RI, 1995.

\bibitem{BK} A. B\"ottcher, Y. I. Karlovich, Carleson curves, Muckenhoupt weights, and Toeplitz operators, Basel, Birkh\"auser, 1997.

\bibitem{BS} A. B\"ottcher, B. Silbermann, Introduction to Large Truncated Toeplitz Matrices, New York, Springer-Verlag, 1999.

\bibitem{BS2} A. B\"ottcher, B. Silbermann, Analysis of Toeplitz Operators, 2nd Ed., New York, Springer-Verlag, 2006.


\bibitem{CCL}
S. N. Chandler-Wilde, R. Chonchaiya and M. Lindner, Eigenvalue
problem meets Sierpinski triangle: computing the spectrum of a
non-self-adjoint random operator, Oper. Matrices, to appear. Preprint at  arXiv:1003.3946v3

\bibitem{CCL2}
S. N. Chandler-Wilde, R. Chonchaiya and M. Lindner, On the spectra
and pseudospectra of a class of non-self-adjoint random matrices and
operators, submitted for publication, 2011. Preprint at arXiv:1107.0177v1

\bibitem{CWL}
S.~N.~Chandler-Wilde and M.~Lindner, Limit
Operators, Collective Compactness, and the Spectral Theory of
Infinite Matrices, Memoirs AMS 210 (2011), No.~989.

\bibitem{CCM}
G. M. Cicuta, M. Contedini and L. Molinari: Non-Hermitian
tridiagonal random matrices and returns to the origin of a random
walk, J. Stat. Phys. 98 (2000), 685-699.

%\bibitem{EBD2000}
%E. B. Davies, Pseudospectra of differential operators, J. Operator
%Theory 43 (2000)  243-262.

\bibitem{EBD0}
E. B. Davies, Spectral properties of random non-self-adjoint
matrices and operators,
    Proc. Roy. Soc. London A 457 (2001) 191-206.

\bibitem{EBD1}
E. B. Davies, Spectral theory of pseudo-ergodic operators, Commun.
Math. Phys. 216 (2001) 687-704.

\bibitem{LOTS}
E. B. Davies, Linear Operators and Their Spectra, Camb. Univ.
Press, Cambridge, 2007.

\bibitem{DS}
E. B. Davies, B. Simon, Scattering theory for systems with different
spatial asymptotics on the left and right, Commun. Math. Phys. 63
(1978) 277-301.

\bibitem{FZ1}
J. Feinberg and A. Zee, Non-Hermitean localization and
de-localization, Phys. Rev. E 59 (1999), 6433-6443.

\bibitem{FZ2}
J. Feinberg and A. Zee, Spectral curves of non-Hermitean
Hamiltonians, Nucl. Phys. B 552 (1999), 599-623.

\bibitem{GK1}
I. Y. Goldsheid, B. A. Khoruzhenko, Distribution of eigenvalues in
non-Hermitian Anderson model, Phys. Rev. Lett. 80 (1998) 2897-2901.

\bibitem{GK2}
I. Y. Goldsheid, B. A. Khoruzhenko, Thouless formula for random
non-Hermitian Jacobi matrices, Israel J. Math. 148 (2005) 331-346.

\bibitem{HOZ}
D. E. Holz, H. Orland and A. Zee, On the remarkable spectrum of a
non-Hermitian random matrix model, J. Phys. A, Math. and General, 36
(2003), 3385-3400.

\bibitem{Li}
M.~Lindner,  Infinite Matrices and their Finite Sections:
An Introduction to the Limit Operator Method, Birkh\"auser, 2006.

\bibitem{LR10}
M.~Lindner and S.~Roch, Finite sections of random Jacobi operators, SIAM J. Numer. Anal., to appear. Preprint at arXiv:1011.0907v1

\bibitem{RRJ}
V.~S.~Rabinovich, S.~Roch and J.~Roe, Fredholm indices of band-dominated operators,
Integral Equations Operator Theory 49 (2004), 221-–238.

\bibitem{RRS}
V.~S.~Rabinovich, S.~Roch, and B.~Silbermann, Limit
Operators and Their Applications in Operator Theory, Birkh\"auser,
2004.

%\bibitem{Re} S. C. Reddy, Pseudospectra of Wiener-Hopf
%integral operators and constant coefficient differential operators,
%J. Int. Eqns. and Applic. 5 (1993) 369-403.

%\bibitem{RT} S. C. Reddy, L. N. Trefethen, Pseudospectra of
%the convection-diffusion operator, SIAM J. Appl. Math. 54 (1994)
%1634-1649.

\end{thebibliography}
\end{document}